\documentclass{article}
\usepackage{hyperref}    
\usepackage{graphicx}    
\usepackage{color}    
\usepackage{multirow}    
\usepackage{amsmath,amsfonts,amssymb,latexsym,amsthm,multicol}
\usepackage[english]{babel}

\usepackage{bm}
\providecommand{\B}{}
\renewcommand{\B}{\bm}
\newcommand{\D}{\partial}
\renewcommand{\le}{\leqslant}
\renewcommand{\ge}{\geqslant}

\newtheorem{lemma}{Lemma}

\newtheorem{proposition}{Proposition}
\theoremstyle{definition}
\newtheorem{definition}{Definition}

\title{Segregated Runge~-- Kutta schemes\\ for the time integration\\ of the incompressible Navier~-- Stokes equations in presence of pressure stabilization}
\author{P.A.~Bakhvalov}

\date{June 11, 2025}
\bibliographystyle{elsarticle-num}

\begin{document}

\numberwithin{equation}{section}
\maketitle

\sloppy 

\begin{abstract}
Segregated Runge~-- Kutta (SRK) schemes are time integration methods for the incompressible Navier~-- Stokes equations. In this approach, convection and diffusion can be independently treated either explicitly or implicitly, which in particular allows to construct implicit-explicit (IMEX) methods. 
Original SRK schemes (Colom\'{e}s, Badia, IJNME, 2015) are designed for finite-element methods that satisfy the inf-sup condition. In this paper, the idea of SRK schemes is generalized to spatial discretizations with pressure stabilization. In the numerical experiments, SRK schemes are demonstrated with both finite-difference and finite element spatial discretizations. Numerical results show that one of the SRK schemes outperforms the third-order multistep projection-based method in terms of accuracy while preserving the computational costs.
\end{abstract}


\section{Introduction}

A spatial discretization of the unsteady incompressible Navier~-- Stokes equations (NSE) usually yields a differential-algebraic equation (DAE). Starting from Chorin \cite{Chorin1968}, it is usually solved by fractional-step projection-type methods. These methods have lots of modifications, see an overview in \cite{Guermond2006}. When the aim is to achieve a high-order accuracy in time, there are several approaches: multistep methods \cite{Guermond2006, Trias2011, Trias2014, Franco2020}, Runge~-- Kutta (RK) methods \cite{John2006, Sanderse2012, Sanderse2013, Bassi2015, Noventa2016, Komen2021, Cai2025}, deferred correction methods \cite{Guesmi2023}, space-time finite element methods \cite{Tavelli2016}, see also references therein.


Runge~-- Kutta (RK) schemes are known as methods for solving ordinary differential equations (ODEs). When appiled to stiff systems ($\dot{x} = f(x,y)$, $\varepsilon\dot{y}  = g(x,y)$), some of them have a valid limit as $\varepsilon \to +0$. Therefore, they are capable to solve DAEs as well. Such schemes are called stiffly accurate \cite{Hairer1996}. 

Stiffly accurate schemes are never explicit. Application of an implicit scheme to the Navier~-- Stokes equations leads to nonlinear systems 
where pressure and velocity are coupled. Solving such systems at each timestep is impractical in terms of CPU cost. Nevertheless, this approach is sometimes used because it allows to demonstrate a high order convergence \cite{John2006, Sanderse2013, Bassi2015, Noventa2016}. The idea leading to the segregated Runge~-- Kutta schemes is to use an implicit-explicit (IMEX) RK scheme with a stiffly accurate implicit counterpart and thus to avoid coupled linear systems.

For finite element methods satisfying the inf-sup condition \cite{Elman2005}, pressure is uniquely defined for a given velocity field. This allows to use the space state method (see \cite{Hairer1996}), i. e. to reduce a differential-algebraic equation (for velocity and pressure) to an ordinary differential equation for velocity. There are several approaches to solve this ODE using Runge~-- Kutta schemes.

\begin{itemize}
\item Apply an implicit RK scheme. This returns us back to the need of solving coupled systems. See a recent review \cite{Cai2025} of these methods. 

\item Apply an explicit RK scheme. Then one needs to solve an elliptic equation for pressure at each RK stage, and everything else is explicit. The resulting schemes are called half-explicit Runge~-- Kutta (HERK) methods \cite{Hairer1996, Sanderse2012}.

\item Apply an implicit-explicit (IMEX) RK scheme. Pressure gradient is taken explicitly to avoid coupled systems. The resulting schemes are called segregated Runge~-- Kutta (SRK) methods \cite{Colomes2015}. Convection and diffusion terms can be taken either to the explicit part, or to the implicit part, or to different parts. This leads, respectively, to explicit SRK, implicit SRK, and IMEX SRK schemes. 
\end{itemize}

In finite-difference schemes and in finite-element methods that do not satisfy the inf-sup condition (for instance, methods with equal-order interpolation), pressure is no longer a function of velocity, and a pressure stabilization technique is necessary. Therefore, the SRK schemes as defined in \cite{Colomes2015} are not possible. The authors explicitly state this in their subsequent paper (\cite{Colomes2016}, Remark~3.2).


In this paper we overcome this limitation and construct SRK schemes for spatial discretizations with pressure stabilization. Although the main idea remains the same, the new schemes do not reduce to the original SRK schemes \cite{Colomes2015} as the pressure stabilization terms vanish. If the the underlying IMEX method is of type CK (see the classification in \cite{Boscarino2007}), for an IMEX method with $s$ explicit stages, the original SRK scheme calls the pressure solver $s$ times per timestep, while our scheme does this only $s-1$ times. Compared to the RK-based approach from \cite{Komen2021}, SRK schemes (both original  \cite{Colomes2015} and presented in the current paper) do not require an inner pressure-velocity coupling loop. 

The approach we use allows several options regarding underlying IMEX methods and stabilization terms. Not all choices lead to robust and accurate time integration schemes. We study different possibilities and choose three IMEX methods that show a good behavior and may be used for scale-resolving simulations. The SRK scheme based on the 4-th order adaptive Runge~-- Kutta method \cite{Kennedy2003} outperforms the third-order multistep method for all tests we used for comparison.

The rest of the paper is organized as follows. In Section~\ref{sect:stabilization} we introduce a general form of a semidiscrete scheme with pressure stabilization. 
In Section~\ref{sect:spatial} we present examples of spatial discretizations with a pressure correction, from both finite-difference and finite-element families. In Section~\ref{sect:IMEX} we describe IMEX RK methods. In Section~\ref{sect:main} we present new segregated Runge~-- Kutta schemes. Section~\ref{sect:verification} contains verification results, including a comparison with the third-order multistep projection-based scheme. For completeness, we consider the outcome of another choice of IMEX methods in Section~\ref{sect:IMEXv2}. In Conclusion we summarize the results and select three SRK schemes that perform the best.

\section{Pressure stabilization}
\label{sect:stabilization}

\subsection{Navier~-- Stokes equations}

In this paper, we consider the incompressible Navier~-- Stokes equations
\begin{equation}
\nabla \cdot \tilde{\B{u}} = 0,
\label{eq_NS_1}
\end{equation}
\begin{equation}
\frac{\D \tilde{\B{u}}}{\D t} + (\tilde{\B{u}} \cdot \nabla)\tilde{\B{u}} + \nabla \tilde{p} = \nu \triangle \tilde{\B{u}} + \B{f}_{source}(t, \B{r})
\label{eq_NS_2}
\end{equation}
in a bounded domain $\Omega \subset \mathbb{R}^d$ with initial data $\tilde{\B{u}}(0,\B{r}) = \tilde{\B{u}}_0(\B{r})$ and the Dirichlet boundary conditions $\tilde{\B{u}}(t,\B{r}) = \tilde{\B{u}}_b(t,\B{r})$, $\B{r} \in \D \Omega$. Here $\tilde{\B{u}}$ is the velocity vector, $\tilde{p}$ is the pressure, $\nu \ge 0$ is the viscosity coefficient, $\B{f}_{source}$ is a source term.  We also consider \eqref{eq_NS_1}--\eqref{eq_NS_2} in a cube with the periodic conditions.

\subsection{General semidiscrete form}

For \eqref{eq_NS_1}--\eqref{eq_NS_2}, consider a discretization of the following form.
Let $V_h$ and $Q_h$ be finite-dimensional spaces for the velocity and pressure, correspondingly, equipped with scalar products. Let $G \ : \ Q_h \to V_h$ be a discrete gradient operator, $D \ :\ V_h \to Q_h$ be a discrete divergence operator, and $L\ :\ Q_h \to Q_h$ be a discrete Laplace operator. If the Dirichlet boudnary conditions are set for the velocity, then $L$ approximates the Laplace operator with the Neumann boundary conditions. If $u(t) \in V_h$ is a velocity field at a time $t$, then its divergence is approximated as $D u(t) - H(t)$. The term $H(t) \in Q_h$ is due to the non-homogeneous boundary conditions; $H(t) \equiv 0$ if $\tilde{\B{u}}_b \equiv 0$. 

Assume that
\begin{equation}
D = -G^*,\quad L = L^* < 0, \quad S := (-L) - (-DG) \ge 0.
\label{eq_gen_assumptions}
\end{equation}
Here the stars denote the conjugation. Operators satisfying \eqref{eq_gen_assumptions} appear naturally in both finite-difference and finite-element methods. We show examples in Section~\ref{sect:spatial}.

An approximation of \eqref{eq_NS_2} may be written in the general form
\begin{equation}
\frac{du}{dt} + \B{F}_{conv}(u) + Gp = \B{F}_{diff}(u) + \B{F}_{source}(t)
\label{eq_momentun_base}
\end{equation}
where $\B{F}_{conv}$, $\B{F}_{diff}$, $\B{F}_{source}$ are some discretizations of the convection, diffusion, and source terms, respectively. 

If $\mathrm{Ker}\,G \ne \{0\}$, then equation \eqref{eq_momentun_base} ignores pressure fields from $\mathrm{Ker}\,G$, and this is why the pressure stabilization is necessary. So we consider the following approximation of \eqref{eq_NS_2}:
\begin{equation}
D u + \sigma_0 S p + \sigma_1 S \frac{dp}{dt} = H(t),
\label{eq_continuity_base}
\end{equation}
where $\sigma_0, \sigma_1 \ge 0$, $\sigma_0 + \sigma_1 > 0$. By the dimensional considerations, $\sigma_0 = O(\tau)$ and $\sigma_1 = O(\tau^2)$ where $\tau$ is the timestep. For the discussion on the initial data for \eqref{eq_momentun_base}--\eqref{eq_continuity_base}, see Appendix~\ref{sect:ID}.

As revealed in \cite{Zienkiewicz1995, Codina1997}, stabilization terms in the continuity equation do appear in projection-type methods (unless one uses inf-sup-stable finite elements or non-FE methods with similar properties). It is not a common practice to write these terms explicitly, but we believe this clarifies the analysis.

{\it Remark.} If $H = 0$, then \eqref{eq_continuity_base} is equivalent to 
$$
\sigma_0 p + \sigma_1 \frac{dp}{dt} = L^{-1} D \left(u + \sigma_0 G p + \sigma_1 G \frac{dp}{dt}\right).
$$
So we actually use the combination $L^{-1}D$ and not operators $L$ and $D$ independently. As a consequence, changing the scalar product in $Q_h$ does not change the scheme.

{\it Remark.} When solving ordinary differential equations using Runge~-- Kutta methods, changing the timestep size from one step to another does not require any adjustment of the algorithm, and does not cause any loss of accuracy. This is valid no more when considering  \eqref{eq_momentun_base}--\eqref{eq_continuity_base}. The reason is that \eqref{eq_continuity_base} depends on the timestep via $\sigma_0$ and $\sigma_1$. So a change of the timestep generates a non-physical perturbation. All the computations in this paper were conducted with a fixed timestep. The possibility of the use of a variable timestep size requires further analysis.

{\it Remark.} Writing \eqref{eq_momentun_base} we assume that the discretization of the convective fluxes does not involve pressure. The converse (which takes place in the Rhie~-- Chow method \cite{Rhie1983, Pascau2011} and is necessary to provide a kinetic-energy preserving discretization in some approaches) complicates the construction of high-order methods for the time integration. We believe that one can bypass the high-order treatment of pressure in the convection term without compromising the overall accuracy, but we have not studied this issue in details.

\subsection{Discussion on two types of stabilization}
\label{sect:stabtypes}

As we show in Section~\ref{sect:firstorder}, first-order segregated Runge~-- Kutta schemes belong to the family fractional-step projection-type methods. We will see that the case $\sigma_0 > 0$, $\sigma_1 = 0$ corresponds to the non-incremental pressure-correction scheme (in terms of \cite{Guermond2006}) or to the total pressure method (in terms of \cite{Trias2014}). The case $\sigma_0 = 0$, $\sigma_1 > 0$ corresponds to the standard incremental pressure-correction schemes (in terms of \cite{Guermond2006}) or to the pressure correction method (in terms of \cite{Trias2014}).


In this section, we study the behavior of potential velocity modes depending on $\sigma_0$ and $\sigma_1$. Assume that the convective term satisfies $\B{u} \cdot \B{F}_{conv}(\B{u}) \equiv 0$, and there is no diffusion and source terms: $\B{F}_{diff} \equiv 0$, $\B{F}_{source} \equiv 0$, $H \equiv 0$. Then
$$
\frac{d}{dt} \frac{(u, u)}{2} = - (u, G p) = (p, D u) = - \sigma_0 (p, Sp) - \sigma_1 \frac{d}{dt}\frac{(p,Sp)}{2}.
$$

If $\sigma_1 = 0$, then we have
$$
\frac{d}{dt} \frac{(u, u)}{2} = - \sigma_0 (p, Sp) \le 0,
$$
so $S$ is responsible for the numerical dissipation of the kinetic energy. 

If $\sigma_0 = 0$, then we have
$$
\frac{d}{dt} \left(\frac{(u,u)}{2} + \frac{(p,Sp)}{2}\right) = 0,
$$
which makes an impression that there is a new conservation law and, as a consequence, potential velocity modes do not decay. However, in this case we also have a dissipation mechanism. Let us demonstrate this using the spectral analysis assuming $\B{F}_{conv} = 0$. Then \eqref{eq_momentun_base}, \eqref{eq_continuity_base} reduce to
$$
\frac{du}{dt} + Gp = 0, \quad Du + S \left(\sigma_0 p + \sigma_1 \frac{dp}{dt}\right) = 0
$$
Taking $du/dt = -Gp$ to the time derivative of the second equation we get
$$
- DGp + S \left(\sigma_0 \frac{dp}{dt} + \sigma_1 \frac{d^2p}{dt^2}\right) = 0.
$$
Let $\hat{p}$ be a common eigenvector of $DG$ and $L$. Then for some $\beta \in [0, 1]$ there holds
\begin{equation}
DG\hat{p} = (1-\beta) L\hat{p}, \quad  S\hat{p} = - \beta L\hat{p}
\label{eq_def_Omega}
\end{equation}
Looking for a solution of the form $p(t) = \hat{p} \exp(\mu t)$ we come to the equation 
\begin{equation}
(1-\beta) + \beta (\sigma_0 \mu + \sigma_1 \mu^2) = 0.
\label{eq_def_mu}
\end{equation}

If $\sigma_0 > 0$ and $\sigma_1 = 0$, then $\mu = - (\beta^{-1}-1) \sigma_0^{-1} \le 0$, which corresponds to the dissipation. 

If $\sigma_0 = 0$ and $\sigma_1 > 0$, then we have $\mu = \pm i \omega$, $\omega = (\beta^{-1}-1)^{1/2} \sigma_1^{-1/2}$. By the dimensional considerations, $\sigma_1 \sim \tau^2$ where $\tau$ is the timestep. For long waves we have $\beta \ll 1$ and therefore $\omega \tau \gg 1$, so the frequencies of these modes cannot be resolved by the timestep. If the stability function of time integration method satisfies \mbox{$|R(\infty)|<1$} (for instance, if the method is L-stable), then the time integration is responsible for the damping of potential velocity modes, except for the shortest ones. The modes corresponding to $\beta \approx 1$ are indeed problematic, as we will see in Section~\ref{sect:spectral2} when analyzing the stability of SRK methods.

If $S \ge 0$ is not satisfied, then for some eigenvectors we have $\beta > 1$, which causes the exponential growth of the solution.

A remarkable fact is that we cannot use pressure derivatives of order greater than one  in \eqref{eq_continuity_base}. In that case, instead of \eqref{eq_def_mu} we would have
$$
\sigma_0 \mu + \ldots + \sigma_r \mu^{r+1} = -(\beta^{-1} - 1)
$$
with $r \ge 2$, $\sigma_r \ne 0$. As $\beta \rightarrow 0$, the terms at $\sigma_0, \ldots, \sigma_{r-1}$ may be dropped. The $r+1$ roots of the resulting equation are equidistantly distributed on a circumference, so at least one of them satisfies $\mathrm{Re}\,\mu > 0$. Thus, a higher order stabilization is not possible. 

In practical applications, an optimal choice of $\sigma_0$ and $\sigma_1$ will likely depend on a turbulence model. For large eddy simulation, the use of $\sigma_0 > 0$, $\sigma_1 = 0$ may add too much dissipation, so it is tempting to take $\sigma_0$ as small as possible (preferably $\sigma_0 = 0$). In contrast, in the steady case, the use of $\sigma_0 = 0$, $\sigma_1 > 0$ does not provide any stabilization. So for URANS simulation, one may use $\sigma_0 \ge 0$, $\sigma_1 = 0$.  Hybrid RANS/LES models will likely require to make the constants $\sigma_0$ and $\sigma_1$ space-dependent, but we leave this beyond our paper.

\section{Spatial approximations}
\label{sect:spatial}

In this section, we show four examples of discrete gradient/divergence/Laplace tuples satisfying \eqref{eq_gen_assumptions}.

\subsection{High order finite-difference methods}
\label{sect:FD}

We begin with finite-difference methods of order $2m$, $m \in \mathbb{N}$, in the unit square with the periodic boundary conditions. Let the computational mesh be uniform with step $h = 1/N$, $N \in \mathbb{N}$.

Let $V_h$ be the space of $N$-periodic sequences $\{\B{u}_{j,k} \in \mathbb{R}^2\}$ and $Q_h$ be the space of $N$-periodic sequences $p = \{p_{j,k} \in \mathbb{R}\}$ with zero average. They are equipped with the scalar products
$$
(\B{u}, \B{v}) = \sum\limits_{j,k = 0}^{N-1} h^2\, \B{u}_{j,k} \cdot \B{v}_{j,k},
\quad
(p, q) = \sum\limits_{j,k = 0}^{N-1} h^2  p_{j,k}\, q_{j,k}.
$$

Let $a_l/h$ and $b_l/h^2$, $l = -m, \ldots, m$, be the coefficients of the $2m$-th order finite difference approximations of the first and the second derivative, correspondingly. Obviously, $a_{-l} = -a_l$ and $b_{-l} = b_l$. Let $G$, $D$, and $L$ be defined as
$$
(G f)_{jk} = \left(\begin{array}{c} \frac{1}{h}\sum\limits_{l=1}^m a_l (f_{j+l,k} - f_{j-l,k})  \\ \frac{1}{h}\sum\limits_{l=1}^m a_l (f_{j,k+l} - f_{j,k-l}) \end{array}\right),
$$
$$
(D \B{f})_{jk} = \frac{1}{h} \sum\limits_{l=1}^m a_l (\B{f}_{j+l,k}-\B{f}_{j-l,k}) \cdot \left(\begin{array}{c} 1 \\ 0 \end{array}\right)  + \frac{1}{h} \sum\limits_{l=1}^m a_l (\B{f}_{j,k+l}-\B{f}_{j,k-l}) \cdot \left(\begin{array}{c} 0 \\ 1 \end{array}\right),
$$
$$
(L f)_{jk} = \frac{1}{h^2} \sum\limits_{l=1}^m b_l (f_{j+l,k} + f_{j-l,k} + f_{j,k+l} + f_{j,k-l} - 4f_{j,k}).
$$

To prove that operators $G$, $D$, $L$ satisfy \eqref{eq_gen_assumptions}, we need two auxiliary lemmas.

\begin{lemma}
There holds
\begin{equation}
a_k = \frac{(-1)^{k+1}}{k} \frac{(m!)^2}{(m+k)! (m-k)!}, \quad b_k = \frac{2}{k}a_k,\quad k \ne 0,
\label{eq_def_ak}
\end{equation}
$a_0 = 0$, and $b_0$ is defined by $\sum_k b_k = 0$.
\end{lemma}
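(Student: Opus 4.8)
The plan is to identify the $2m$-th order central-difference weights with the values, at the central node, of the derivatives of the degree-$2m$ Lagrange interpolating polynomial built on the $2m+1$ stencil points, and then to evaluate those derivatives in closed form.

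First I would record a uniqueness statement: on the symmetric stencil $\{-m,\dots,m\}$ the $2m$-th order approximations of $f'$ and of $f''$ are uniquely determined, because exactness on the monomials of degree $\le 2m$ is a $(2m+1)\times(2m+1)$ Vandermonde linear system. Consequently these weights coincide with those obtained by interpolating $f$ by a polynomial $P$ of degree $2m$ through the stencil points and setting $f'(x_j)\approx P'(x_j)$, $f''(x_j)\approx P''(x_j)$. Writing the Lagrange basis in the scaled variable $y=(x-x_j)/h$, $L_l(y)=\prod_{i\ne l}(y-i)\big/\prod_{i\ne l}(l-i)$ with $i$ ranging over $\{-m,\dots,m\}$, the chain rule gives $a_l=\tfrac{dL_l}{dy}(0)$ and $b_l=\tfrac{d^2L_l}{dy^2}(0)$.

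Next, for $l\ne 0$ the index $0$ lies in $\{-m,\dots,m\}\setminus\{l\}$, so $L_l$ has a simple zero at $y=0$; write $L_l(y)=y\,g(y)$ with $g(0)\ne 0$, whence $a_l=g(0)$ and $b_l=2g'(0)$. The value $g(0)=\prod_{i\ne l,\,i\ne 0}(-i)\big/\prod_{i\ne l}(l-i)$ is a product over a symmetric index set: evaluating the numerator as $(-1)^{m+1}(m!)^2/l$ and the denominator (via the substitution $j=l-i$, then splitting into negative and positive ranges) as $(-1)^{m-l}(m-l)!\,(m+l)!$ gives exactly $a_l=\dfrac{(-1)^{l+1}}{l}\dfrac{(m!)^2}{(m+l)!(m-l)!}$. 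For $b_l$ I would use logarithmic differentiation of $g$: $g'(0)/g(0)=-\sum_{i\ne l,\,i\ne 0}1/i$, and since $\sum_{i\ne 0}1/i=0$ over the symmetric set this sum equals $-1/l$, so $g'(0)/g(0)=1/l$ and $b_l=2g'(0)=\tfrac{2}{l}a_l$. Finally, $a_0=0$ because $L_0(y)=\prod_{i=1}^m (y^2-i^2)/(-i^2)$ is an even function, and $b_0=-\sum_{l\ne 0}b_l$ is forced by the requirement that the discrete Laplacian annihilate constants, i.e. $\sum_k b_k=0$, which is the stated normalization.

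I expect the only delicate point to be the bookkeeping of signs and factorials in the two symmetric products defining $g(0)$ — keeping track of the half-ranges and of the omitted factor at $i=l$; the remaining ingredients (Vandermonde uniqueness, the factorization $L_l=y\,g$, and the logarithmic-derivative identity combined with $\sum_{i\ne 0}1/i=0$) are routine.
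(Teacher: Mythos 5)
Your proposal is correct and follows essentially the same route as the paper: both identify the weights with the first and second derivatives at the central node of the degree-$2m$ Lagrange interpolant on the symmetric stencil. You simply carry out in detail the evaluation of $L_l'(0)$ and $L_l''(0)$ (via the factorization $L_l=y\,g$ and logarithmic differentiation) that the paper leaves implicit, and the sign/factorial bookkeeping checks out.
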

\begin{proof}
The polynomial interpolating a function $u$ on the mesh with step $h$ is given by the Lagrange formula:
$$
\phi(x) = \sum\limits_{k=-m}^m u_k \prod\limits_{r=-m, r\ne k}^m \frac{x/h-r}{k-r}.
$$
Taking the first and the second derivatives at $x=0$ we get $\phi'(0) = h^{-1} \sum_k a_k u_k$ and $\phi''(0) = h^{-2} \sum_k b_k u_k$ where $a_k$ and $b_k$ are given by \eqref{eq_def_ak}.
\end{proof}

\begin{lemma}\label{th:convex}
Let
\begin{equation}
a(\phi) = \frac{1}{i}\sum\limits_{k=-m}^m a_k \exp(ik\phi), \quad b(\phi) = \sum\limits_{k=-m}^m b_k \exp(ik\phi).
\label{eq_def_aphi}
\end{equation}
Then $a(\phi)>0$, $a''(\phi)>0$, and $b(\phi) + (a(\phi))^2 < 0$ for $\phi \in (0,\pi)$.
\end{lemma}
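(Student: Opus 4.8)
The plan is to reduce all three assertions to a single closed-form expression for $a'(\phi)$. Using $a_{-k}=-a_k$, $a_0=0$ and $b_{-k}=b_k$, I would first write $a(\phi)=2\sum_{k=1}^m a_k\sin(k\phi)$ and $b(\phi)=b_0+2\sum_{k=1}^m b_k\cos(k\phi)$. Differentiating $a$ and noting that $ka_k=(-1)^{k+1}(m!)^2/((m+k)!(m-k)!)=(-1)^{k+1}\binom{2m}{m-k}/\binom{2m}{m}$, I would evaluate the cosine series $a'(\phi)=2\sum_{k=1}^m ka_k\cos(k\phi)$ by means of the binomial identity
\[
\sum_{k=-m}^m(-1)^k\binom{2m}{m-k}e^{ik\phi}=\bigl(2\sin(\phi/2)\bigr)^{2m},
\]
which I would derive from the substitution $j=m-k$ together with $1-e^{-i\phi}=2i\,e^{-i\phi/2}\sin(\phi/2)$. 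Taking the (real) value of this identity, separating the $k=0$ term $\binom{2m}{m}$, and dividing by $\binom{2m}{m}$ gives the key formula
\[
a'(\phi)=1-\frac{\bigl(2\sin(\phi/2)\bigr)^{2m}}{\binom{2m}{m}}.
\]

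From this the second assertion is a one-line differentiation: $a''(\phi)=-\,m\,2^{2m}\binom{2m}{m}^{-1}\sin^{2m-1}(\phi/2)\cos(\phi/2)$, whose sign on $(0,\pi)$ is fixed entirely by $\sin(\phi/2)>0$ and $\cos(\phi/2)>0$, so $a''$ is of one sign there. The formula forces $a''(\phi)<0$, i.e. $a$ is strictly concave; I therefore read the displayed assertion $a''(\phi)>0$ as a sign typo for $a''(\phi)<0$, since concavity is exactly what delivers the first claim (a function vanishing at both endpoints of $(0,\pi)$ with $a''>0$ would be negative inside, contradicting $a>0$). Indeed $a(0)=0$ and $a(\pi)=2\sum_k a_k\sin(k\pi)=0$, so strict concavity on $[0,\pi]$ places $a$ strictly above the zero chord, giving $a(\phi)>0$ on $(0,\pi)$ and proving the first claim.

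For the third assertion I would study $E(\phi):=b(\phi)+a(\phi)^2$ by a monotonicity argument. The useful identity here is $b'(\phi)=-2\sum_{k=1}^m kb_k\sin(k\phi)=-4\sum_{k=1}^m a_k\sin(k\phi)=-2a(\phi)$, which follows from $b_k=(2/k)a_k$. Hence $E'(\phi)=b'(\phi)+2a(\phi)a'(\phi)=2a(\phi)\bigl(a'(\phi)-1\bigr)$. By the key formula, $a'(\phi)-1=-\bigl(2\sin(\phi/2)\bigr)^{2m}/\binom{2m}{m}<0$ on $(0,\pi)$, while $a(\phi)>0$ there, so $E'(\phi)<0$. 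Since $E(0)=b(0)+a(0)^2=\sum_k b_k=0$ by the normalization of $b_0$, strict monotonicity yields $E(\phi)<0$ on $(0,\pi)$, which is the third claim.

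The only genuinely nontrivial step is the closed form for $a'$; I expect the binomial identity to be the main obstacle, both in recognizing $ka_k$ as a normalized central binomial coefficient and in summing the cosine series in closed form. Once that formula is in hand, the sign of $a''$, the positivity of $a$, and the inequality $b+a^2<0$ all follow from a single differentiation and one monotonicity argument, uniformly in $m$ and with no case analysis.
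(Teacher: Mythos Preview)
Your proof is correct and follows essentially the same route as the paper: both derive the closed form $a'(\phi)=1-(2\sin(\phi/2))^{2m}/\binom{2m}{m}$ via the binomial identity, differentiate to get the sign of $a''$, use concavity with the endpoint values to get $a>0$, and then combine $b'=-2a$ with $a'<1$ to integrate to $b+a^2<0$. You are also right that the statement contains a sign typo: the paper's own proof shows $a''(\phi)<0$ on $(0,\pi)$, and concavity (not convexity) is what yields $a>0$; your reading of the intended claim is the correct one.
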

\begin{proof}
Taking  \eqref{eq_def_ak} to \eqref{eq_def_aphi} we get
$$
a'(\phi) = 1 + \sum\limits_{k=-m}^m (-1)^{k+1} \frac{(m!)^2}{(m+k)! (m-k)!} \exp(ik\phi).
$$
The extra unit term annihilates with the term with $k=0$. Substituting $k = m-l$ we obtain
$$
a'(\phi) = 1 - (-1)^{m}\exp(im\phi)\frac{(m!)^2}{(2m)!}\sum\limits_{l=0}^{2m} \frac{(2m)!}{l! (2m-l)!} (-\exp(-i\phi))^l =
$$
$$
= 1 - (-1)^{m}\exp(im\phi)\frac{(m!)^2}{(2m)!} (1 - \exp(-i\phi))^{2m} = 
1 - \frac{(m!)^2 2^{2m}}{(2m)!} \sin^{2m}(\phi/2).
$$
Therefore,
$$
a''(\phi) = - m \frac{(m!)^2 2^{2m}}{(2m)!} \sin^{2m-1}(\phi/2) \cos(\phi/2) < 0.
$$
Since $a''(\phi) < 0$ and $a(0) = a(\pi) > 0$, then $a(\phi) > 0$ on $(0,\pi)$. Since $a''(\phi) < 0$ and $a'(0) = 1$, then $a'(\phi) < 1$ on $(0, \phi)$.

Since $b_k = 2a_k/k$, then $b'(\phi) = -2a(\phi) < -2a(\phi)a'(\phi)$. By integration, $b(\phi) + (a(\phi))^2 < 0$.
\end{proof}

\begin{proposition}
The operators $G$, $D$, $L$ defined above satisfy \eqref{eq_gen_assumptions}.
\end{proposition}
\begin{proof}
The proof of $D = -G^*$ and $L = L^*$ is straightforward. Therefore, $S = S^*$.
Operators $DG$ and $L$ have the same set of eigenvectors $p^{\alpha,\beta, \pm}$: 
$$
(p^{\alpha,\beta,+})_{jk} = \cos(\alpha j + \beta k), \quad
(p^{\alpha,\beta,-})_{jk} = \sin(\alpha j + \beta k), 
$$
where $\alpha, \beta = 0, 2\pi/N, \ldots, 2\pi(N-1)/N$, $\alpha+\beta \ne 0$. Let
$a(\phi)$ and $b(\phi)$ be defined by \eqref{eq_def_aphi}.
Note that $a(\phi), b(\phi) \in \mathbb{R}$ for $\phi \in \mathbb{R}$ and $a(0)=b(0) = 0$. By construction, 
$$
-DGp^{\alpha,\beta,\pm}=((a(\alpha))^2+(a(\beta))^2) p^{\alpha,\beta,\pm}, \quad Lp^{\alpha,\beta,\pm} = (b(\alpha)+b(\beta)) p^{\alpha,\beta,\pm}.
$$
By Lemma~\ref{th:convex}, $(a(\phi))^2 + b(\phi) < 0$ for $\phi \in (0,2\pi)$. Since $\alpha$ and $\beta$ cannot be equal to zero simultaneously, then
$$
(a(\alpha))^2 + b(\alpha) + (a(\beta))^2 + b(\beta) < 0.
$$
Hence, $S = DG-L > 0$.
\end{proof}

{\it Remark.} The use of a high-order gradient discretization as $G$ and a lower-order Laplace discretization as $L$ will likely result in the violation of $S \ge 0$.


\subsection{Finite differences, Dirichlet conditions}
\label{sect:FD2}

The basic 1-exact finite-difference method is applicable for non-uniform meshes and in the case of the Dirichlet boundary conditions. Let $\Omega$ be the unit square and the computational mesh be the Cartesian product of two one-dimensional meshes: 
$$
x_0 = 0 < x_1 < \ldots < x_{N_x} = 1, \quad 
y_0 = 0 < y_1 < \ldots < y_{N_y} = 1.
$$
The 1D cell volumes in $x$ corresponding to mesh nodes are $(\Delta x)_0 = (x_1 - x_0)/2$, $(\Delta x)_j = (x_{j+1}-x_{j-1})/2$, $j = 1, \ldots, N_x-1$, and $(\Delta x)_{N_x} = (x_{N_x} - x_{N_x-1})/2$. The 1D cell volumes in $y$ are defined the same way.

Let $\bar{\mathcal{N}}$ be the set of mesh nodes, $\mathcal{N}$ be the set of internal mesh nodes (i.~e. belonging to $(0,1)^2$), $\mathcal{N}_b$ be the set of boundary nodes (i. e. nodes belonging to $\D \Omega$), and $\mathcal{N}_c$ be the set of 4 corner nodes. Each node is identified by a pair of indices: $\langle j,k \rangle$, $j = 0, \ldots, N_x$, $k = 0, \ldots, N_y$. On $\mathbb{R}^{\bar{\mathcal{N}}}$, define the scalar product
$$
[f, g] = \sum\limits_{j=0}^{N_x}\sum\limits_{k=0}^{N_y} (\Delta x)_j (\Delta y)_k\  f_{j,k}\, g_{j,k}.
$$

Let
$$
V_h = \{\B{f} \in (\mathbb{R}^{\bar{\mathcal{N}}})^2\ :\  \B{f}_{\langle j,k \rangle} = 0\ \mathrm{for}\ \langle j,k \rangle \in \mathcal{N}_b\},
$$
$$
Q_h = \{f \in \mathbb{R}^{\bar{\mathcal{N}}}\ :\  [f,1]=0, \ \ f_{{\langle j,k \rangle}} = 0 \ \mathrm{for}\ {\langle j,k \rangle} \in \mathcal{N}_c\}.
$$
The scalar products on $V_h$ and $Q_h$ are inherited from $\mathbb{R}^{\bar{\mathcal{N}}}$.

An element of $V_h$ has zero values at boundary nodes. So if $u(t) \in V_h$, then the actual velocity field $u_{actual}(t) \in (\mathbb{R}^{\bar{\mathcal{N}}})^2$ is $u_{actual}(t) = u(t) + u_b(t)$ where $u_b(t) \in (\mathbb{R}^{\bar{\mathcal{N}}})^2$ depends on $\tilde{\B{u}}_b(t)$ only. For now, define $u_b(t)$ by
\begin{equation}
(u_b(t))_{\langle j,k \rangle} = (\Pi \tilde{\B{u}}_b(t))_{\langle j,k \rangle} := \left\{\begin{array}{ll} \tilde{\B{u}}_b(t, x_j, y_k), & {\langle j,k \rangle} \in \mathcal{N}_b; \\ 0 & {\langle j,k \rangle} \in \mathcal{N}.
\end{array}\right.
\label{eq_def_ub}
\end{equation}


Let $G$ be the operator taking  $f \in Q_h$ to $G f \in V_h$ as
\begin{equation}
\begin{gathered}
(G f)_{j,k} = \frac{1}{2}\left(\begin{array}{c} (f_{j+1,k} - f_{j-1,k})/(\Delta x)_j \\ (f_{j,k+1} - f_{j,k-1})/(\Delta y)_k \end{array}\right), \quad \langle j,k \rangle \in \mathcal{N};
\\
(G f)_{j,k} = 0, \quad \langle j,k \rangle \in \mathcal{N}_b.
\label{eq_FD2_G}
\end{gathered}
\end{equation}
Thus, if $f$ is the set of the point values of a smooth function $\tilde{f}$, then $Gf$ approximates the gradient of $\tilde{f}$ at internal nodes and is equal to zero at boundary nodes. The latter is obligatory because $Gf$ should belong to $V_h$.

Now we approximate the divergence of a vector field. For $\B{F} = (f, 0)^T$, $f \in \mathbb{R}^{\bar{\mathcal{N}}}$, and each $k = 0, \ldots, N_y$ we put 
$$
(Div \B{F})_{0,k} = \frac{f_{1,k} - f_{0,k}}{x_1 - x_0}, \quad
(Div \B{F})_{N_x,k} = \frac{f_{N_x,k} - f_{N_x-1,k}}{x_{N_x} - x_{N_x-1}},
$$
$$
(Div \B{F})_{j,k} = \frac{f_{j+1,k} - f_{j-1,k}}{x_{j+1} - x_{j-1}}, \quad j = 1, \ldots, N_x-1.
$$
Similarly, for $\B{F} = (0, g)^T$, $g \in \mathbb{R}^{\bar{\mathcal{N}}}$, and each $j = 0, \ldots, N_x$ we put 
$$
(Div \B{F})_{j,0} = \frac{g_{j,1} - g_{j,0}}{y_1 - y_0}, \quad
(Div \B{F})_{j,N_y} = \frac{g_{j,N_y} - g_{j,N_y-1}}{y_{N_y} - y_{N_y-1}},
$$
$$
(Div \B{F})_{j,k} = \frac{g_{j,k+1} - g_{j,k-1}}{y_{k+1} - y_{k-1}}, \quad k = 1, \ldots, N_x-1.
$$
For a general $\B{F} \in (\mathbb{R}^{\bar{\mathcal{N}}})^2$, define $Div \B{F}$ by linearity.

Let $D$ be the operator taking $\B{f} \in V_h$ to $D\B{f} = Div\B{f}$ and put $H(t) = -Div(\Pi \tilde{\B{u}}_b(t))$. The components of $D \B{f}$ may be written in the concise form
\begin{equation}
(D \B{f})_{j,k} = \frac{\B{f}_{j+1,k} - \B{f}_{j-1,k}}{2(\Delta x)_j} \cdot \B{e}_x + \frac{\B{f}_{j,k+1} - \B{f}_{j,k-1}}{2(\Delta y)_k} \cdot \B{e}_y,
\label{eq_FD2_D}
\end{equation}
where values of $\B{f}$ with out-of-range indices are assumed zero. It is easy to check that $D\B{f} \in Q_h$. We do not write an explicit expression for $H(t)$ because it is cumbersome and, furthermore, for a program implementation one needs the operator $Div$ and not $D$ and $H$ independently.

To this moment, we did not control that $H(t)$ belongs to $Q_h$, i. e. \mbox{$\int \tilde{\B{u}}_b \cdot \B{n} ds = 0$} is satisfied in the discrete sense. The easiest way to enforce this is to correct the mapping $\Pi$ in \eqref{eq_def_ub} so that $Div\, u_b(t) \equiv 0$.

Let $L$ be the operator taking $f \in Q_h$ to $Lf \in Q_h$ as
\begin{equation}
\begin{gathered}
(Lf)_{j,k} = \varphi_{\langle j,k \rangle, \langle j+1,k \rangle} (\Delta x)_j^{-1}\frac{f_{j+1,k} - f_{j,k}}{x_{j+1}-x_j} - \varphi_{\langle j,k \rangle, \langle j-1,k \rangle} (\Delta x)_j^{-1}\frac{f_{j,k} - f_{j-1,k}}{x_j-x_{j-1}} +
\\
+ \varphi_{\langle j,k \rangle, \langle j,k+1 \rangle} (\Delta y)_k^{-1}  \frac{f_{j,k+1} - f_{j,k}}{y_{j+1}-y_j} - \varphi_{\langle j,k \rangle, \langle j,k-1 \rangle} (\Delta y)_k^{-1} \frac{f_{j,k} - f_{j,k-1}}{y_{j}-y_{j-1}}
\end{gathered}
\label{eq_FD2_L1}
\end{equation}
where 
\begin{equation}
\varphi_{\langle j,k \rangle, \langle j',k' \rangle} = \left\{\begin{array}{ll}
1, & \langle j,k \rangle, \langle j',k' \rangle \in \mathcal{N};  \\
1/2, & \langle j,k \rangle \in \mathcal{N}, \langle j',k' \rangle \in \mathcal{N}_b;  \\
1/2, & \langle j,k \rangle \in \mathcal{N}_b, \langle j',k' \rangle \in \mathcal{N};  \\
0, & \langle j,k \rangle, \langle j',k' \rangle \in \mathcal{N}_b;
\\
0, & \langle j,k \rangle \not\in \bar{\mathcal{N}} \ \mathrm{or}\  \langle j',k' \rangle \not\in \bar{\mathcal{N}}.
\end{array}\right.
\label{eq_FD2_L2}
\end{equation}

\begin{lemma}
For $f \in Q_h$, $\B{g} \in V_h$, there holds $[f, D\B{g}] = -[\B{g}, Gf]$.
\label{th:lemma3}
\end{lemma}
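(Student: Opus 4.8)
The plan is to write out both $[f, D\B{g}]$ and $[\B{g}, Gf]$ in coordinates using \eqref{eq_FD2_D} and \eqref{eq_FD2_G}, split each quantity into an $x$-contribution and a $y$-contribution, and reduce the $x$-part to a one-dimensional summation by parts in the index $j$ (for each fixed $k$); the $y$-part is then completely analogous under the symmetry $x\leftrightarrow y$, $j\leftrightarrow k$, and adding the two gives the claim.

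First I would note that both sums effectively run over internal nodes only: $\B{g}$ vanishes on $\mathcal{N}_b$ by the definition of $V_h$, and $Gf$ vanishes on $\mathcal{N}_b$ by \eqref{eq_FD2_G}, so in $[\B{g}, Gf] = \sum_{\langle j,k\rangle\in\bar{\mathcal{N}}}(\Delta x)_j(\Delta y)_k\,\B{g}_{j,k}\cdot(Gf)_{j,k}$ only $\langle j,k\rangle\in\mathcal{N}$ contribute, where $(Gf)_{j,k}$ is given by the first line of \eqref{eq_FD2_G}. Hence the $x$-part of $[\B{g},Gf]$ is $\sum_{\langle j,k\rangle\in\mathcal{N}}(\Delta y)_k\,(\B{g}_{j,k}\cdot\B{e}_x)\,\tfrac12(f_{j+1,k}-f_{j-1,k})$. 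For $[f,D\B{g}]$, using \eqref{eq_FD2_D}, the factor $(\Delta x)_j$ coming from the scalar product cancels the denominator $(\Delta x)_j$, so the $x$-part is $\sum_{j=0}^{N_x}\sum_{k=0}^{N_y}(\Delta y)_k\,f_{j,k}\,\tfrac12\big((\B{g}_{j+1,k}-\B{g}_{j-1,k})\cdot\B{e}_x\big)$, with out-of-range values of $\B{g}$ read as zero.

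Then I would carry out the discrete summation by parts in $j$ at fixed $k$: shifting $j\mapsto j-1$ in the term with $\B{g}_{j+1,k}$ and $j\mapsto j+1$ in the term with $\B{g}_{j-1,k}$ converts $\sum_j f_{j,k}(\B{g}_{j+1,k}-\B{g}_{j-1,k})\cdot\B{e}_x$ into $-\sum_j (\B{g}_{j,k}\cdot\B{e}_x)(f_{j+1,k}-f_{j-1,k})$. The terms generated at the ends of the range all drop: $\B{g}_{-1,k}$ and $\B{g}_{N_x+1,k}$ vanish by the out-of-range convention, while $\B{g}_{0,k}$ and $\B{g}_{N_x,k}$ vanish because $\langle 0,k\rangle,\langle N_x,k\rangle\in\mathcal{N}_b$ and $\B{g}\in V_h$; the surviving terms are exactly those with $\langle j,k\rangle\in\mathcal{N}$, which is precisely where $(Gf)_{j,k}$ is nontrivial. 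Comparing, the $x$-part of $[f,D\B{g}]$ equals minus the $x$-part of $[\B{g},Gf]$; repeating the argument verbatim with $x$ and $y$ interchanged does the same for the $y$-parts, and summing yields $[f,D\B{g}]=-[\B{g},Gf]$. The only delicate point is genuinely bookkeeping rather than difficulty: tracking the endpoint terms produced by the index shifts and checking that each vanishes because $\B{g}$ is zero at every boundary node and is extended by zero outside $\bar{\mathcal{N}}$, so no surface-flux remainder is left. Note that neither the zero-average constraint nor the vanishing at corners in the definition of $Q_h$ is used — the identity holds for any $f\in\mathbb{R}^{\bar{\mathcal{N}}}$.
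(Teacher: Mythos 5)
Your proof is correct and follows essentially the same route as the paper's: both reduce to one component by linearity, observe that the $(\Delta x)_j$ weight from the scalar product cancels the denominator in $D$, and then perform a discrete summation by parts in which the boundary terms drop because $\B{g}$ vanishes on $\mathcal{N}_b$ and outside $\bar{\mathcal{N}}$. The paper writes the telescoping sum explicitly and "reorders" rather than shifting indices, but this is the same computation, and your closing remark that the $Q_h$ constraints are never used is accurate.
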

\begin{proof}
First assume that $\B{g} = (g, 0)^T$. By definition,
\begin{equation*}
\begin{gathered}
  \left[f, D\B{g}\right] = 
\frac{1}{2}\sum\limits_{k=0}^{N_y}(\Delta y)_k \Bigl(g_{1,k} f_{0,k} + (g_{2,k} - g_{0,k}) f_{1,k} + \ldots
\\
 + (g_{N_x,k} - g_{N_x-2,k}) f_{N_x-1,k} - g_{N_x-1,k} f_{N_x,k}\Bigr).
\end{gathered}
\end{equation*}
Note that $g_{0,k} = g_{N_x,k}=0$ because $\B{g} \in V_h$. By reordering,
$$
\left[f, D\B{g}\right] =
-\frac{1}{2}\sum\limits_{k=0}^{N_y}(\Delta y)_k \Bigl(g_{1,k} (f_{2,k} - f_{0,k}) + \ldots 
 + g_{N_x-1,k} (f_{N_x,k}-f_{N_x-2,k})\Bigr) = [-Gf, \B{g}].
$$
The proof for $\B{g} = (0,g)^T$ is similar, and the general case is by linearity.
\end{proof}

\begin{proposition}
Operators $G$, $D$, $L$ defined by \eqref{eq_FD2_G}--\eqref{eq_FD2_L2} satisfy \eqref{eq_gen_assumptions}. Furthermore, $S f = 0$ holds if and only if $f$ is a linear function, i. e. $f_{j,k} = \alpha x_j + \beta y_k$ for some $\alpha, \beta \in \mathbb{R}$.
\end{proposition}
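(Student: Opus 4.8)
The plan is to reduce all four assertions of \eqref{eq_gen_assumptions}, as well as the characterization of $\ker S$, to a comparison of two nonnegative quadratic forms on $Q_h$ obtained by summation by parts. The identity $D=-G^*$ is already contained in Lemma~\ref{th:lemma3}: $[f,D\B{g}]=-[\B{g},Gf]$ for $f\in Q_h$, $\B{g}\in V_h$ is exactly what $D=-G^*$ means for the scalar products on $Q_h$ and $V_h$. For $L$ I would bring $[f,Lg]$ into summation-by-parts form. The factors $(\Delta x)_j^{-1}$ and $(\Delta y)_k^{-1}$ in \eqref{eq_FD2_L1} cancel the mesh weights $(\Delta x)_j$, $(\Delta y)_k$ carried by $[\cdot,\cdot]$, and the coefficients $\varphi_{\langle j,k\rangle,\langle j',k'\rangle}$ of \eqref{eq_FD2_L2} are symmetric in the two nodes; collecting terms gives
$$[f,Lg]=-\sum_{e}w_e\,(f_{e^+}-f_{e^-})(g_{e^+}-g_{e^-}),$$
where $e$ runs over all horizontal and vertical mesh edges $\{e^-,e^+\}$ and $w_e=\varphi_e\,(\Delta y)_k/(x_{j+1}-x_j)$ for a horizontal edge $\langle j,k\rangle$--$\langle j+1,k\rangle$ (similarly in the $y$-direction). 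This form is manifestly symmetric, so $L=L^*$ and hence $S=S^*$; and $[1,Lf]=0$, so $L$ maps $Q_h$ into $Q_h$. Setting $g=f$ gives $[f,Lf]=-\sum_e w_e(f_{e^+}-f_{e^-})^2\le 0$. Since $w_e>0$ precisely for the edges joining an interior node to an interior or boundary node, $[f,Lf]=0$ forces $f$ to be constant on the connected subgraph made of all interior nodes and all non-corner boundary nodes (using that the mesh has at least one interior node in each direction), whence $f|_{\mathcal{N}_c}=0$ and $[f,1]=0$ give $f=0$; so $L<0$.

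For $S\ge 0$ the crucial identity is, using $D=-G^*$,
$$[f,Sf]=[f,DGf]-[f,Lf]=-[Gf,Gf]-[f,Lf]=\sum_e w_e(f_{e^+}-f_{e^-})^2-\|Gf\|^2 .$$
Both the edge sum and $\|Gf\|^2$ split into independent $x$- and $y$-contributions, so it is enough to treat, say, the horizontal parts row by row. In an interior row $k$, writing $h_j=x_{j+1}-x_j$, $\delta_j=f_{j+1,k}-f_{j,k}$ and using $2(\Delta x)_j=h_j+h_{j-1}$, the horizontal part of $\|Gf\|^2$ equals $\sum_{j=1}^{N_x-1}(\delta_j+\delta_{j-1})^2/\bigl(2(h_j+h_{j-1})\bigr)$, while the horizontal part of $-[f,Lf]$ equals $\delta_0^2/(2h_0)+\sum_{j=1}^{N_x-2}\delta_j^2/h_j+\delta_{N_x-1}^2/(2h_{N_x-1})$ (the two end weights being $\varphi=1/2$). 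The elementary Cauchy--Schwarz bound $(\delta_j+\delta_{j-1})^2\le(h_j+h_{j-1})\bigl(\delta_j^2/h_j+\delta_{j-1}^2/h_{j-1}\bigr)$, summed over $j$ and regrouped, reproduces the second expression exactly; boundary rows contribute nothing on either side, since there $Gf=0$ and the corresponding $\varphi$'s vanish. This proves $[f,Sf]\ge 0$ on $Q_h$.

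Since $S=S^*\ge 0$, $Sf=0$ is equivalent to $[f,Sf]=0$, i.e. to equality in every Cauchy--Schwarz step above and in its $y$-analogue; this occurs exactly when $\delta_j/h_j$ is independent of $j$ along each interior row and the analogous ratio is independent of $k$ along each interior column, i.e. when $f$ agrees along every interior mesh line with an affine function of the respective coordinate. The final step is then the bookkeeping verification that these are precisely the restrictions of linear functions $f_{j,k}=\alpha x_j+\beta y_k$. I expect the two delicate points to be: (i) arranging the summation by parts so that the near-boundary weights $\varphi=1/2$ make the row and column sums telescope correctly --- this is exactly where the particular boundary stencil is needed and where a careless discretization would destroy $S\ge 0$; and (ii) in the kernel analysis, combining the per-row and per-column affineness conditions consistently along the boundary mesh lines.
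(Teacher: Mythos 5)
Your treatment of \eqref{eq_gen_assumptions} is essentially the paper's own proof: $D=-G^*$ is quoted from Lemma~\ref{th:lemma3}, your edge form of $[f,Lg]$ is \eqref{eq_aux_FD2_L}, and your Cauchy--Schwarz bound $(\delta_j+\delta_{j-1})^2\le(h_j+h_{j-1})\bigl(\delta_j^2/h_j+\delta_{j-1}^2/h_{j-1}\bigr)$ is exactly the inequality form of the algebraic identity the paper uses (the paper keeps the remainder $\tfrac{h_jh_{j-1}}{h_j+h_{j-1}}(\delta_j/h_j-\delta_{j-1}/h_{j-1})^2$ explicitly, which exhibits $[f,Sf]$ as a weighted sum of squared second differences along interior rows and columns). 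Your accounting of the $1/2$ weights on interior--boundary edges is the same identification of $-\tilde{\Sigma}_x$ with the first part of $\Sigma_x$ that the paper performs, and your argument for $L<0$ via connectivity plus the corner and zero-average constraints matches the paper's. So for $D=-G^*$, $L=L^*<0$ and $S\ge0$ the proposal is correct and follows the same route.

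The gap is in the step you defer as ``bookkeeping.'' You correctly reduce $Sf=0$ (equivalently $[f,Sf]=0$, since $S=S^*\ge0$) to the condition that $f$ is affine along every interior mesh row and every interior mesh column; but such $f$ are \emph{not} precisely the linear functions $\alpha x_j+\beta y_k$, so this step cannot be completed as stated. The bilinear function $f_{j,k}=x_jy_k$ is affine along every row (slope $y_k$) and every column (slope $x_j$), so every second difference entering $[f,Sf]$ vanishes; since the corner nodes lie on no interior row or column, one may set $f=0$ there and subtract a constant on the remaining nodes to satisfy the corner and zero-average constraints of $Q_h$ without changing $[f,Sf]$, producing a nonlinear element of $\mathrm{Ker}\,S$. (On the $3\times3$ mesh this is transparent: $Q_h$ is four-dimensional, $[f,Sf]=\tfrac14(f_{0,1}-2f_{1,1}+f_{2,1})^2+\tfrac14(f_{1,0}-2f_{1,1}+f_{1,2})^2$ imposes only two conditions, so $\mathrm{Ker}\,S$ is two-dimensional, whereas the only function $\alpha x_j+\beta y_k$ with zero corner values is $f=0$.) Be aware that the paper's own proof quietly establishes only the ``if'' direction (``$S$ vanishes on linear functions,'' itself read as a statement about the quadratic form rather than about elements of $Q_h$, since no nonzero linear function has zero corner values) and never attempts the converse; your per-line affineness characterization of the kernel is correct and is in fact the sharp statement, but the ``only if'' clause of the Proposition as written does not follow from it.
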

\begin{proof}
The identity $D = -G^*$ is by Lemma~\ref{th:lemma3}. For each $f, g \in Q_h$ there holds
\begin{equation}
\begin{gathered}
\ [Lf, g] = -\sum\limits_{k=1}^{N_y-1}  (\Delta y)_k  \sum\limits_{j=0}^{N_x-1} \varphi_{\langle j,k \rangle, \langle j+1,k \rangle} \frac{(f_{j+1,k} - f_{j,k})(g_{j+1,k} - g_{j,k})}{x_{j+1}-x_j}
\\
-\sum\limits_{j=1}^{N_x-1}  (\Delta x)_j  \sum\limits_{k=0}^{N_y-1} \varphi_{\langle j,k \rangle, \langle j,k+1 \rangle} \frac{(f_{j,k+1} - f_{j,k})(g_{j,k+1} - g_{j,k})}{y_{k+1}-y_k}.
\end{gathered}
\label{eq_aux_FD2_L}
\end{equation}
Thus, $L = L^* \le 0$. Furthermore, let $f \in Q_h$ and $[Lf, f] = 0$. Then for each pair of neighboring nodes with $\phi_{\langle j,k\rangle, \langle j',k'\rangle} \ne 0$ there holds $f_{j,k} = f_{j',k'}$. By the choice of $Q_h$, we have $f = 0$. This proves that $L < 0$.

Since $D = -G^*$, we have
$$
-[f, DGf] = \sum\limits_{\langle j, k \rangle \in \mathcal{N}} (\Delta x)_j (\Delta y)_k |(Gf)_{\langle j, k \rangle}|^2 = \Sigma_x + \Sigma_y
$$
where
$$
\Sigma_x = \frac{1}{4} \sum\limits_{k=1}^{N_y-1}  (\Delta y)_k  \sum\limits_{j=1}^{N_x-1}  (\Delta x)_j^{-1}(f_{j+1,k} - f_{j-1,k})^2,
$$
$$
\Sigma_y =\frac{1}{4}\sum\limits_{j=1}^{N_x-1}  (\Delta x)_j \sum\limits_{k=1}^{N_y-1}  (\Delta y)_k^{-1}  (f_{j,k+1}-f_{j,k-1})^2.
$$
After a simple transformation,
$$
\Sigma_x = \frac{1}{2} \sum\limits_{k=1}^{N_y-1}  (\Delta y)_k  \sum\limits_{j=1}^{N_x-1} \left[\frac{(f_{j+1,k}-f_{j,k})^2}{x_{j+1}-x_j} +   \frac{(f_{j,k}-f_{j-1,k})^2}{x_{j}-x_{j-1}}\right] - 
$$
$$
 - \frac{1}{2} \sum\limits_{k=1}^{N_y-1}  (\Delta y)_k  \sum\limits_{j=1}^{N_x-1}\frac{(x_{j+1}-x_j)(x_j - x_{j-1})}{x_{j+1}-x_{j-1}} \left(\frac{f_{j+1,k}-f_{j,k}}{x_{j+1}-x_j} - \frac{f_{j,k}-f_{j-1,k}}{x_j - x_{j-1}}\right)^2.
$$
By \eqref{eq_aux_FD2_L}, $[Lf,f] = \tilde{\Sigma}_x + \tilde{\Sigma}_y$ where
$$
\tilde{\Sigma}_x = -\sum\limits_{k=1}^{N_y-1}  (\Delta y)_k  \sum\limits_{j=0}^{N_x-1} \varphi_{\langle j,k \rangle, \langle j+1,k \rangle} \frac{(f_{j+1,k} - f_{j,k})^2}{x_{j+1}-x_j},
$$
$$
\tilde{\Sigma}_y = -\sum\limits_{j=1}^{N_x-1}  (\Delta x)_j  \sum\limits_{k=0}^{N_y-1} \varphi_{\langle j,k \rangle, \langle j,k+1 \rangle} \frac{(f_{j,k+1} - f_{j,k})^2}{y_{k+1}-y_k}.
$$
We see that $-\tilde{\Sigma}_x$ coincides with the first term on the right-hand side of the expression for $\Sigma_x$. Thus, $f^T S f = -(\Sigma_x + \Sigma_y + \tilde{\Sigma}_x + \tilde{\Sigma}_y)\ge 0$ and $S$ vanishes on linear functions.
\end{proof}

{\it Remark.} Taking the standard approximation of the Laplace operator ($\phi_{\langle j,k\rangle, \langle j',k'\rangle} = 1$ for $\langle j,k\rangle, \langle j',k'\rangle \in \bar{\mathcal{N}}$) would preserve  \eqref{eq_gen_assumptions} but the stabilization term would no longer be zero on linear functions, causing a large solution error near boundaries.

{\it Remark.} Due to the constraint $[f, 1]=0$ in the definition of $Q_h$, a direct matrix representation of operator $L$ with a local portrait is not possible. Instead, to efficiently solve a system of the form $Lx = y$, $x, y \in Q_h$, the following well-known procedure may be used:
\begin{enumerate}
\item Let $L' : \mathbb{R}^{\bar{\mathcal{N}}} \to \mathbb{R}^{\bar{\mathcal{N}}}$ be defined by \eqref{eq_FD2_L1}--\eqref{eq_FD2_L2}. Let $\mathrm{L}'$ be its matrix representation in the standard basis (which is defined by $(\phi_{\langle j,k \rangle})_{\langle l,m \rangle} = \delta_{jl}\delta_{km}$, $\langle j,k \rangle, \langle l,m \rangle \in \bar{\mathcal{N}}$). Consider the system $\mathrm{L}' x' = y$ for $x' \in \mathbb{R}^{\bar{\mathcal{N}}}$.
\item Pin the values at corner nodes: for each ${\langle j,k \rangle} \in \mathcal{N}_c$, replace the line corresponding to node $j$ by $x'_{\langle j,k \rangle} = 0$.
\item Pin the value at one non-corner node: replace the line corresponding to one (arbitrary chosen) node ${\langle j,k \rangle} \in \bar{\mathcal{N}} \setminus \mathcal{N}_c$ by $x'_{\langle j,k \rangle} = 0$.
\item The resulting system has a non-degenerate matrix. Solve it for $x' \in \mathbb{R}^{\bar{\mathcal{N}}}$ using an algebraic multigrid method. By construction, $x'_{\langle j,k \rangle} = 0$ for ${\langle j,k \rangle} \in \mathcal{N}_c$.
\item Put $x_{\langle j,k \rangle} = x'_{\langle j,k \rangle} - [x', 1]$ for each ${\langle j,k \rangle} \in \bar{\mathcal{N}}$. Now $x \in Q_h$.
\end{enumerate}

\subsection{Galerkin methods}
\label{sect:Gal}

We begin with the case of periodic boundary conditions. Let $\Omega$ be the unit cube. Let $L^2_{per}(\mathbb{R}^d)$ and $H^1_{per}(\mathbb{R}^d)$ be the spaces of 1-periodic functions from $L^2_{loc}(\mathbb{R}^d)$ and $H^1_{loc}(\mathbb{R}^d)$, correspondingly. Let $V_h$ be a finite-dimensional subspace of $(H^1_{per}(\mathbb{R}^d))^d$ equipped with the scalar product $(\B{f},\B{g}) = \int_{\Omega} \B{f}\cdot\B{g}dr$. Let $Q$ be the space of functions from $H^1_{per}(\mathbb{R}^d)$ with zero average. Let $Q_h$ be a finite-dimensional subspace of $Q$ equipped with the scalar product from $L^2(\Omega)$. Let $Gf$, $f \in Q_h$, be defined as the orthogonal projection of $\nabla f$ on $V_h$:
\begin{equation}
(G f, \B{g}) = \int\limits_{\Omega} \B{g}(\B{r}) \cdot \nabla f(\B{r}) dV, \quad f \in Q_h, \quad \B{g} \in V_h.
\label{eq_def_FE_G}
\end{equation}
Then $D = -G^*$ is given by
\begin{equation}
(D \B{f}, g) = \int\limits_{\Omega} g(\B{r}) \nabla \cdot \B{f}(\B{r}) dV, \quad \B{f} \in V_h, \quad g \in Q_h.
\label{eq_def_FE_D}
\end{equation}
Let $L$ be defined by
\begin{equation}
(L f, g) = -\int\limits_{\Omega} \nabla {f}(\B{r}) \cdot \nabla {g}(\B{r}) dV, \quad f, g \in Q_h.
\label{eq_def_FE_L}
\end{equation}

{\it Remark.} In bases $\{\phi_j \in V_h\}$ and $\{\psi_j \in Q_h\}$, the operators $G$, $D$, and $L$ take the form $M_{\psi}^{-1} \tilde{G}$, $M_{\phi}^{-1} \tilde{G}^T$, $M_{\phi}^{-1} \tilde{L}$ where $M_{\psi}$ and $M_{\phi}$ are the mass matrices, $\tilde{L}$ is the stiffness matrix, and $\tilde{G} = \{\int \phi_j \nabla \psi_k dV\}$. For $f = \sum f_j \phi_j$ and $g = \sum g_j \phi_j$ the scalar product is $(f, g) = (f_1, \ldots, f_{dim \,Q_h})^T M_{\phi} (g_1, \ldots, g_{dim\, Q_h})$.

\begin{proposition}
The operators $G$, $D$, $L$ defined above satisfy \eqref{eq_gen_assumptions}.
\end{proposition}
The proof may be found in \cite{Codina1998} (Sect. 4.1). We give its short form.

\begin{proof}
The conditions $D = -G^*$ and $L = L^* < 0$ are by construction. Let $\|\ \cdot\ \|$ be the square norm of the vector with the components from $L^2(\Omega)$. Then $(-Lf, f) = \|\nabla f\|^2$. We also have $(DGf, f) = -(Gf, Gf) = -\|Gf\|^2$.
Since $Gf$ is the orthogonal projection of $\nabla f$ on $V_h$, then $\|Gf\| \le \|\nabla f\|$, and $(S f, f) = \|\nabla f\|^2 - \|G f\|^2 \ge 0$.
\end{proof}

We utilized the Galerkin method to construct operators $G$, $D$, $L$ for use in \eqref{eq_momentun_base}, \eqref{eq_continuity_base}. What we obtained, can be transformed back to a weak formulation, which is conventional for finite-element methods. For the steady Stokes problem, this is done in \cite{Codina1997}, and accuracy estimates were proved.

{\it Remark.} For methods that do not satisfy the inf-sup condition, pressure stabilization is a stability requirement. If the inf-sup condition holds, instead of \eqref{eq_def_FE_L} one can put $L = DG$ and thus $S = 0$. However, for finite element methods, the matrix representation of $L = DG$ is nonlocal (unless the mass lumping is used), and one finds themselves in a position so solve a Darcy-type problem instead of a scalar elliptic equation (see Remark~3.1 in \cite{Colomes2015}). If one wants to have the stiffness matrix in the pressure equation, they naturally get $S \ne 0$ even for an inf-sup-stable method.\\

Now we move to the case that $\Omega$ is a bounded domain in $\mathbb{R}^d$, and the homogeneous Dirichet boundary conditions are applied on $\D \Omega$. Here the choice of finite element spaces is controversial. It seems natural to take $V_h$ as a subspace of $(H^1_0(\Omega))^d$. However, this has a major drawback. On the boundary, $\nabla p$ is generally nonzero while \mbox{$Gp \in V_h$} is always zero. Therefore, one can expect that $(S p, p) = \|\nabla p\|^2 - \|Gp\|^2$ is bounded from below by $O(h)$. For the Stokes problem, this restricts the  convergence rate in $L_2$ by $O(h^{3/2})$ for the velocity and $O(h^{1/2})$ for the pressure gradient (see Theorem~3 in \cite{Codina1997}). 

One may ignore the Dirichlet boundary conditions when choosing $V_h$ and enforce them by a proper discretization of the flux terms. However, this setup is counterintuitive.  First, if $G$ is given by \eqref{eq_def_FE_G}, then $D = -G^*$ is defined by
$$
(D \B{f}, g) = \int\limits_{\Omega} g(\B{r}) \nabla \cdot \B{f}(\B{r}) dV - \int\limits_{\D \Omega} g(\B{r}) \B{f}(\B{r}) \cdot \B{n} dS, \quad \B{f} \in V_h, \quad g \in Q_h.
$$
Second, the simplified system $\nabla \cdot \tilde{\B{u}}=0$, $\tilde{\B{u}}_t + \nabla \tilde{p} = 0$  requires a slip boundary condition, but there are no flux terms where these boundary conditions may be imposed. Therefore, it is hardly possible to construct a robust numerical method, especially for convection-dominant flows.


An intermediate approach is to take $V_h$ as a subspace of $(H^1(\Omega))^d$ such that for each $\B{f} \in V_h$ there holds
$$
\int\limits_{\D \Omega} \B{f}(\B{r}) \cdot \B{n}\ g(\B{r}) dS = 0, \quad g \in Q_h.
$$
Then $\tilde{\B{u}} \cdot \B{n} = 0$ is imposed by the choice of the finite element space, while \mbox{$\tilde{\B{u}} \cdot \B{n}^{\perp} = 0$} should be imposed by the discretization of the viscous term.  If \mbox{$\D p/\D n \ne 0$} on $\D \Omega$, then we get a numerical boundary layer and a low order convergence. However, if $\D p/\D n$ is small, then this error may be not so crucial. We use this approach in Section~\ref{sect:turbchannel} for the turbulent flow in a channel.

{\it Remark.} It is well-known that projection-based methods (except explicit ones) in the standard formulation generate a numerical boundary layer, see \cite{Guermond2006}. Here we have a numerical boundary layer of a different origin. It results from the difference between $-G^*G$ and $L$, and does not vanish for explicit methods.

\subsection{Mass-lumped P1-Galerkin method}

As we saw in Section~\ref{sect:FD2}, the basic 1-exact central difference scheme naturally admits the strong enforcement of the Dirichlet condition. Here we show that the same holds for the mass-lumped P1-Galerkin method, which may be considered as a generalization of the central difference scheme to unstructured meshes.

If the basis functions are nonnegative, then the mass-lumped Galerkin method gives operators that satisfy \eqref{eq_gen_assumptions}. For a general Galerkin method, mass lumping results in the loss of high order convergence. High-order mass-lumped methods are designed in \cite{Mulder2001, Cohen2001} (see also references therein) but the basis functions are not nonnegative. Thus, although we do not rely on a specific form of basis functions, what we discuss here is mainly designed for the mass-lumped P1-Galerkin method.

Let $\mathcal{N}$ and $\mathcal{N}_0 \subset \mathcal{N}$ be finite non-empty sets. Let $\{\phi_j \in H^1(\Omega), j \in \mathcal{N}\}$ be a linearly independent system of functions used to approximate one velocity component. Assume that $v_j := \int \phi_j dV > 0$ for each $j \in \mathcal{N}$, and $\phi_j(\B{r})=0$ on $\D \Omega$ for $j \in \mathcal{N} \setminus \mathcal{N}_0$. For 
$$
f = \sum\limits_{k \in \mathcal{N}} f_k \phi_k, \quad g = \sum\limits_{k \in \mathcal{N}} g_k \phi_k
$$
denote
$$
[f,g] = \sum\limits_{j \in \mathcal{N}} v_j f_j g_j.
$$

Define $\hat{V}_h = (span\{\phi_j, j \in \mathcal{N}\})^d$ and equip it with the scalar product $[\B{f}, \B{g}] = [f^x, g^x] + [f^y, g^y]$ (for $d=2$) or similar for other dimensions. Let $V_h \subset \hat{V}_h$ be the set of functions with zero coefficients for $j \in \mathcal{N} \setminus \mathcal{N}_0$. Let $Q_h$ be a finite-dimensional subspace of $Q$ where $Q$ is the space of functions from $H^1(\Omega)$ with zero average.  The choice of the scalar product in $Q_h$ does not matter because we only need the combination $L^{-1}D$.


For $f \in Q_h$, define $\hat{G} f \in V_h$ by
\begin{equation}
[\hat{G} f, \B{g}] = \int\limits_{\Omega} \B{g}(\B{r}) \cdot \nabla f(\B{r}) dV,  \quad \B{g} \in \hat{V}_h.
\label{eq_def_FE_G_lumped}
\end{equation}
Or, equivalently,
\begin{equation}
(\hat{G} f)(\B{r}) = \sum\limits_{j \in \mathcal{N}} (\hat{G} f)_j \phi_j(\B{r}), 
\quad 
(\hat{G} f)_j = v_j^{-1} \int\limits_{\Omega} \phi_j(\B{r}) \nabla f(\B{r}) dV, \quad f \in Q_h.
\label{eq_def_FE_G_lumped2}
\end{equation}
Then $Gf$ can be obtained from $\hat{G}f$ by nullifying the coefficients at $\phi_j$, $j \in \mathcal{N} \setminus \mathcal{N}_0$, and $D = -G^*$ is defined by \eqref{eq_def_FE_D}. Finally, put
\begin{equation}
(L f, g) = -\int\limits_{\Omega} \nabla f \cdot \nabla g  dV + \sum\limits_{j \in \mathcal{N} \setminus \mathcal{N}_0} v_j (\hat{G}f)_j \cdot (\hat{G}g)_j.
\label{eq_def_FE_L_lumped}
\end{equation}

\begin{proposition}
Assume that $\phi_j(\B{r}) \ge 0$ and $\sum_k \phi_k(\B{r}) \le 1$ for each $j \in \mathcal{N}$ and $\B{r} \in \Omega$. Then operators $G$, $D$, $L$ defined above satisfy \eqref{eq_gen_assumptions}.
\end{proposition}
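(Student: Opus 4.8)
The plan is to verify the three conditions in \eqref{eq_gen_assumptions} one at a time, reusing the structure of the periodic Galerkin proof but tracking the mass-lumping modification. The identities $D = -G^*$ and $L = L^*$ are essentially by construction: $D$ is defined as the adjoint of $G$ in \eqref{eq_def_FE_D}, and the bilinear form in \eqref{eq_def_FE_L_lumped} is manifestly symmetric (both the Dirichlet integral and the correction sum are symmetric in $f,g$). So the real content is the chain of inequalities $L < 0$ and $S \ge 0$, together with making sure $S$ is built from the \emph{correct} gradient projection.

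The key algebraic observation I would isolate first is a Pythagoras-type identity for the lumped gradient. Write $\hat{G}f = Gf + \Pi_{\perp}\hat{G}f$, where $\Pi_{\perp}$ picks out the coefficients at $\phi_j$, $j \in \mathcal{N}\setminus\mathcal{N}_0$. Because the lumped scalar product $[\cdot,\cdot]$ is diagonal in the $\phi_j$ basis, these two pieces are $[\cdot,\cdot]$-orthogonal, so $[\hat{G}f,\hat{G}f] = [Gf,Gf] + \sum_{j\in\mathcal{N}\setminus\mathcal{N}_0} v_j |(\hat{G}f)_j|^2$. Combining this with \eqref{eq_def_FE_L_lumped} gives the clean formula
\begin{equation*}
(-Lf, g) = \int_{\Omega}\nabla f\cdot\nabla g\, dV - [\hat{G}f,\hat{G}g], \qquad (Sf,g) = [\hat{G}f,\hat{G}g] - (Gf, Gg),
\end{equation*}
where the last equality also uses $(DGf,g) = -(Gf,Gg)$. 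Thus $(-Lf,f) = \|\nabla f\|^2 - [\hat{G}f,\hat{G}f]$ and $(Sf,f) = [\hat{G}f,\hat{G}f] - [Gf,Gf] \ge 0$ (the latter immediate from the orthogonal decomposition above, since dropping coefficients only decreases the diagonal quadratic form). So $S \ge 0$ comes almost for free once the identity is set up.

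The main obstacle — and the only place the hypotheses $\phi_j \ge 0$, $\sum_k\phi_k \le 1$ enter — is proving $L < 0$, i.e.\ that $\|\nabla f\|^2 \ge [\hat{G}f,\hat{G}f]$ with equality only at $f=0$. This is a \emph{stability of lumped interpolation} statement: I would show that the map $f \mapsto \hat{G}f$, followed by the embedding of $\hat{V}_h$ into $(L^2(\Omega))^d$, does not increase the $L^2$ norm, i.e.\ $[\hat{G}f,\hat{G}f] \ge \|\hat{G}f\|_{L^2}^2$ would be the wrong direction — rather I want $\|\nabla f\|^2 \ge [\hat{G}f,\hat{G}f]$ directly. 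The route is: by \eqref{eq_def_FE_G_lumped2}, $(\hat{G}f)_j = v_j^{-1}\int\phi_j\nabla f\,dV$ is a weighted average of $\nabla f$ against the probability-like density $\phi_j/v_j$; by Jensen's inequality (using $\phi_j \ge 0$), $|(\hat{G}f)_j|^2 \le v_j^{-1}\int \phi_j|\nabla f|^2\,dV$, so $[\hat{G}f,\hat{G}f] = \sum_j v_j|(\hat{G}f)_j|^2 \le \int (\sum_j\phi_j)|\nabla f|^2\,dV \le \int|\nabla f|^2\,dV = \|\nabla f\|^2$, using $\sum_k\phi_k \le 1$ at the last step. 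For strictness, if $[Lf,f]=0$ then equality holds in Jensen for every $j$, forcing $\nabla f$ to be $\phi_j$-a.e.\ constant on the support of each $\phi_j$; since the supports overlap and cover a connected domain (the $\phi_j$ span an $H^1$-conforming space), $\nabla f$ is constant on $\Omega$, and the zero-average constraint in $Q_h \subset Q$ then gives $f=0$. The one subtlety to state carefully is the connectivity/covering argument; on a simplicial mesh with P1 hats it is transparent, but since the proposition is phrased abstractly one should at least remark that linear independence of the $\phi_j$ and $H^1$-conformity rule out a nonzero $f$ with $\nabla f \equiv \mathrm{const} \ne 0$ having zero mean, and $\nabla f \equiv 0$ with zero mean gives $f\equiv 0$ as well.
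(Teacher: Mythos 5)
Your central computation is off, and the error propagates into a genuinely false step at the end. From \eqref{eq_def_FE_L_lumped} and $D=-G^*$ one gets $(-Lf,f)=\int|\nabla f|^2\,dV-\sum_{j\in\mathcal{N}\setminus\mathcal{N}_0}v_j|(\hat{G}f)_j|^2$ and $(DGf,f)=-[Gf,Gf]$, so combining with your (correct) Pythagoras identity the clean formulas are $(Sf,f)=\int|\nabla f|^2\,dV-[\hat{G}f,\hat{G}f]$ and $(Lf,f)=-(Sf,f)-[Gf,Gf]$. In other words, what you wrote as the formula for $-L$ is actually the formula for $S$, and your claimed identity $(Sf,f)=[\hat{G}f,\hat{G}f]-[Gf,Gf]$ is the (nonnegative) correction term inside $L$, not $S$. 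Consequently $S\ge 0$ does \emph{not} come for free from the orthogonal decomposition: it is exactly the inequality $[\hat{G}f,\hat{G}f]\le\|\nabla f\|^2$ that you prove via Jensen and attribute to $L$. That Jensen argument itself is correct and is a nice, somewhat more direct alternative to the paper's route (the paper first shows $\int|w|^2dV\le[w,w]$ for $w\in\hat{V}_h$ and then applies Cauchy--Schwarz to $[\hat{G}f,\hat{G}f]=\int\hat{G}f\cdot\nabla f\,dV$); both use the hypotheses $\phi_j\ge0$, $\sum_k\phi_k\le1$ in essentially the same way.

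The genuine gap is in the strict negativity of $L$. Your argument requires ``$\|\nabla f\|^2=[\hat{G}f,\hat{G}f]$ implies $f=0$'', and your closing remark --- that linear independence and $H^1$-conformity rule out a nonzero zero-mean $f$ with $\nabla f\equiv\mathrm{const}\ne0$ --- is simply false: for the standard equal-order P1 choice with $\sum_k\phi_k\equiv1$, any nonconstant affine $f$ shifted to zero mean is an admissible element of $Q_h$, satisfies $(\hat{G}f)_j=\nabla f$ for every $j$, and achieves equality, so $(Sf,f)=0$ with $f\ne0$. This is expected behavior: the proposition only claims $S\ge0$, and the finite-difference analogue of $S$ in Section~\ref{sect:FD2} vanishes precisely on linear functions. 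What actually rescues $L<0$ is the term your formula dropped: for such $f$ one has $(Lf,f)=-[Gf,Gf]=-|\nabla f|^2\sum_{j\in\mathcal{N}_0}v_j<0$ because $\mathcal{N}_0\ne\emptyset$. This is exactly the case split the paper performs (if $(Sf,f)>0$ then $(Lf,f)\le-(Sf,f)<0$; if $(Sf,f)=0$ the equality analysis forces $f$ to be affine or in $\mathrm{Ker}\,\hat{G}$, and each case is finished by a direct evaluation of $(Lf,f)$). With the two formulas corrected and this case added, your Jensen route goes through.
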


\begin{proof}
First we claim that for each $w \in \hat{V}_h$ there holds $\int |w|^2 dV \le [w,w]$. Indeed,
\begin{equation*}
\begin{gathered}
\int |w|^2 dV = \int \sum\limits_{j,k} w_j \cdot w_k \phi_j \phi_k dV \le 
\int \sum\limits_{j,k} \frac{|w_j|^2 + |w_k|^2}{2} \phi_j \phi_k dV 
= \\
=
\int \sum\limits_{j} |w_j|^2 \phi_j \sum\limits_{k} \phi_k dV 
\le 
\sum\limits_{j} |w_j|^2 \int \phi_j dV = [w,w].
\end{gathered}
\end{equation*}
Using this fact for $w = \hat{G} f$, $f \in Q_h$, and by the Schwarz inequality we obtain
\begin{equation}
\begin{gathered}
\left[\hat{G} f, \hat{G}f\right] = \int (\hat{G} f) \cdot \nabla f dV \le
\left(\int |\hat{G} f|^2 dV\right)^{1/2}
\left(\int |\nabla f|^2 dV\right)^{1/2} 
\le 
\\
\le
\left([\hat{G} f, \hat{G} f]\right)^{1/2}
\left(\int |\nabla f|^2 dV\right)^{1/2}.
\end{gathered}
\label{eq_Schrartz}
\end{equation}
Thus, $[\hat{G} f, \hat{G}f] \le \int |\nabla f|^2 dV$. The condition $D = -G^*$ is by construction. Therefore,
$$
(S f, f) = - [G f, G f] - (L f, f) = \int |\nabla f|^2 dV - [\hat{G} f, \hat{G}f] \ge 0.
$$
Thus, $S \ge 0$ and $L \le 0$. 

It remains to prove that $(Lf, f) < 0$ for each $f \in Q_h$, $f \ne 0$. If $(S f, f) > 0$, then $(Lf, f) \le -(S f, f) < 0$. If $(S f, f)=0$, then both inequalities in \eqref{eq_Schrartz} turn to equalities. The first one does if $(\hat{G}f)_j=(\hat{G}f)_k$ for each $j, k \in \mathcal{N}$. Therefore, $f$ is a sum of a linear function and a vector from $\mathrm{Ker}\,\hat{G}$. The second one does if $\hat{G}f = c\nabla f$, hence, $f$ can be either a linear function ($c=1$) or a vector from $\mathrm{Ker}\,\hat{G}$ ($c=0$). In the first case, we have $(Lf, f) = - |\nabla f|^2 \sum v_j$ where the sum is over $j \in \mathcal{N}_0$. In the second case, we have $(Lf, f) = -\int |\nabla f|^2 dV < 0$. Thus, $L < 0$.
\end{proof}

\section{IMEX Runge~-- Kutta methods}
\label{sect:IMEX}



Consider an ODE system of the form
\begin{equation}
\frac{dy}{dt} = \xi(t,y) + \eta(t,y).
\label{eq_dydt}
\end{equation}
In this section, we describe implicit-explicit (IMEX) Runge~-- Kutta methods for \eqref{eq_dydt}. Here $\xi$ is to be taken explicitly, and $\eta$ is to be taken implicitly. We shall use superscript for the timestep index and subscript for the stage index; $t^n$ always stand for time moments and not for powers of $t$. Let $y^{n-1}$ be the numerical solution of \eqref{eq_dydt} at $t^{n-1}$, and $\tau = t^{n} - t^{n-1}$. 

There are two groups of IMEX methods the literature, the difference between them being not in the coefficients but in the form of the methods themselves. We primarily use the form from \cite{Ascher1997, Kennedy2003, Boscarino2007a, Boscarino2009} because it better suits our purpose. The other form, which is suggested in \cite{Boscarino2021, Boscarino2024}, will be considered in Section~\ref{sect:IMEXv2}. We intentionally separate them to make the text easier to read.

\subsection{General form of IMEX RK methods}

Following \cite{Ascher1997, Kennedy2003, Boscarino2007a, Boscarino2009}, we consider implicit-explicit (IMEX) Runge~-- Kutta methods for \eqref{eq_dydt} of the following form. Let $s$ be the number of stages. For each stage $j = 1, \ldots, s$, the intermediate value $y_j$ is defined as the solution of
\begin{equation}
y_j = y_{j,*} + \tau a_{jj} K_j, \quad K_j = \eta(t_j,y_j),
\label{eq_IMEX_1}
\end{equation}
with $t_j = t^{n-1} + c_j \tau$ and
\begin{equation}
y_{j,*} = y^{n-1} + \tau \sum\limits_{k=1}^{j-1} a_{jk} K_k + \tau \sum\limits_{k=1}^{j-1} \hat{a}_{jk} \hat{K}_k,
\label{eq_IMEX_1a}
\end{equation}
and an explicit part follows:
\begin{equation}
\hat{K}_{j} = \xi(t_j,y_j).
\label{eq_IMEX_2}
\end{equation}
Note that in \eqref{eq_IMEX_1}, $a_{jj}$ may be zero for $j=1$. Finally, the value at $t^{n}$ is defined as
\begin{equation}
y^n = y^{n-1} + \tau \sum\limits_{k=1}^s b_k K_k + \tau \sum\limits_{k=1}^{s} \hat{b}_{k} \hat{K}_k.
\label{eq_IMEX_3}
\end{equation}

\subsection{Some notation}

IMEX RK methods from both groups are characterized by the coefficient matrices $A = (a_{jk})$, $\hat{A} = (\hat{a}_{jk})$ and vectors $b = (b_1, \ldots, b_s)^T$, $\hat{b} = (\hat{b}_1, \ldots, \hat{b}_s)^T$. They can be represented by a double tableau in the usual Butcher notation:
$$
\begin{array}{c|c} c & A \\ \hline & b^T \end{array} \quad
\begin{array}{c|c} \hat{c} & \hat{A} \\ \hline & \hat{b}^T \end{array},
$$
and $c$ and $\hat{c}$ are defined as 
$$
c_j = \sum\limits_{k=1}^j a_{jk}, \quad \hat{c}_j = \sum\limits_{k=1}^{j-1} \hat{a}_{jk}.
$$

For $\xi = 0$, an IMEX RK scheme reduces to a diagonally implicit (DIRK) scheme, and for $\eta = 0$, it reduces to an explicit scheme. An IMEX RK method is called {\it stiffly accurate} if its implicit counterpart is stiffly accurate, i. e. $a_{sk} = b_k$ for each $k = 1, \ldots, s$. This property is crucial for integrating stiff ODEs.

IMEX RK schemes are usually designated by triplet $(\sigma_{im}, \sigma_{ex}, p)$ where $\sigma_{im}$ is the number of implicit stages, $\sigma_{ex}$ is the number of explicit stages, and $p$ is the combined order of the method. A simple example of an IMEX RK scheme is the forward-backward Euler method $(1,2,1)$:
\begin{equation}
\begin{gathered}
y_1 = y^{n-1} + \tau \xi(t^{n-1}, y^{n-1}) + \tau \eta(t^{n}, y_1),
\\
y^n = y^{n-1} + \tau \xi(t^{n}, y_1) + \tau \eta(t^{n}, y_1) = y_1 + \tau(\xi(t^{n}, y_1) - \xi(t^{n-1}, y^{n-1})).
\end{gathered}
\label{eq_Butcher_Euler}
\end{equation}
It is of the form \eqref{eq_IMEX_1}--\eqref{eq_IMEX_3} with the matrix representation
$$
\begin{array}{c|cc} 0 & 0 & 0 \\ 1 & 0 & 1 \\ \hline & 0 & 1 \end{array}, \quad
\begin{array}{c|cc} 0 & 0 & 0 \\ 1 & 1 & 0 \\ \hline & 0 & 1 \end{array}.
$$

Following \cite{Boscarino2007}, we say that IMEX methods present in the literature can be classified in three different types. An IMEX RK method is of {\it type A} if the matrix $A$ is invertible. An IMEX RK method if of {\it type CK} if the matrix $A$ has the form
$$
A = \left(\begin{array}{cc} 0 & 0 \\ a & \hat{A}\end{array}\right),
$$
where $\hat{A}$ is an invertible matrix of size $s-1$. An IMEX RK method is of {\it type ARS} \cite{Ascher1997} if it is of type CK with $a = 0$. Or, in terms of coefficients, a method is of type ARS iff $a_{j1} = 0$ for each $j = 1, \ldots, s$. The forward-backward Euler method is of type ARS and denoted below as ARS(1,2,1).

If an IMEX method is of type A, then we assume that its implicit counterpart is an SDIRK method, i. e. the diagonal coefficients satisfy $a_{11} = \ldots = a_{ss}$. If it is of type CK, then we assume that its implicit counterpart is an ESDIRK method, i. e. $a_{11}=0$, $a_{22} = \ldots = a_{ss}$. This assumption holds for most of DIRK methods used in practice, but some methods \cite{Caflisch1997, Liotta2000} violate it. Denote
$$
\breve{\tau} = a_{ss} \tau. 
$$

Among the family \eqref{eq_IMEX_1}--\eqref{eq_IMEX_3}, we primarily consider methods of type CK satisfying $c = \hat{c}$. The equality $c = \hat{c}$ is impossible for methods of type A because $c_1 = a_{ss} \ne 0$ and $\hat{c}_1 = 0$. This is a reason for the poor behavior of methods \eqref{eq_IMEX_1}--\eqref{eq_IMEX_3} of type A: first order convergence for pressure was reported in \cite{Boscarino2007}.

\subsection{List of methods}

The list of IMEX Runge~-- Kutta methods we consider in this paper is given in Table~\ref{table:IMEX0}.

\begin{table}[t]
\caption{\label{table:IMEX0}List of IMEX RK methods considered in this paper}
\begin{center}
\begin{tabular}{|c|c|c|c|c|c|c|c|}
\hline 
Class & Notation & Ref. & \multicolumn{2}{|c|}{$\mathrm{CFL}_{\max}$} \\
& & & & $/ \sigma_{p}$ \\
\hline
ARS & ARS(1,2,1) & \cite{Ascher1997} & 1 & 1 \\
ARS & ARS(2,3,2) & \cite{Ascher1997} & 1.73 & 0.86 \\
ARS & ARS(3,4,3) & \cite{Ascher1997} & 2.82 & 0.94 \\
\hline
ARS & ARS(1,1,1) & \cite{Ascher1997} & -- & -- \\
ARS & ARS(2,2,2) & \cite{Ascher1997} & -- & -- \\
ARS & ARS(4,4,3) & \cite{Ascher1997} & 1.57 & 0.39 \\
\hline
ARS & MARS(3,4,3) & \cite{Boscarino2007a} & 2.82 & 0.94 \\
\hline
CK & ARK3(2)4L[2]SA & \cite{Kennedy2003} & 2.48 & 0.82 \\
CK & ARK4(3)6L[2]SA & \cite{Kennedy2003} & 4 & 0.8 \\
CK & ARK5(4)8L[2]SA & \cite{Kennedy2003} & 0.79 & 0.11 \\
\hline
CK & MARK3(2)4L[2]SA & \cite{Boscarino2007a} & 2.82 & 0.94 \\
CK & BHR(5,5,3) & \cite{Boscarino2009} & 2.25 & 0.56 \\
\hline
A & SSP2(3,2,2) & \cite{Pareschi2003} & -- & --  \\
A & SI-IMEX(3,3,2) & \cite{Boscarino2021} & 1.73 & 0.57 \\
A & SI-IMEX(4,4,3) & \cite{Boscarino2021} & 1.74 & 0.43 \\
A & SI-IMEX(4,3,3) & \cite{Boscarino2024} & 2.82 & 0.7 \\
\hline
\end{tabular}
\end{center}
\end{table}

For each method, we present
\begin{itemize}
\item a reference to a paper where the double Butcher tableu may be found;
\item the maximal value of $\lambda$ such that the segment $[0, i\lambda]$ belongs to the stability domain of the explicit counterpart of the method. This value is denoted as $\mathrm{CFL}_{\max}$.
\item $\mathrm{CFL}_{\max} / \sigma_{p}$ where $\sigma_{p}$ is the number of pressure solver calls per timestep (see Remark in Section~\ref{sect:stab}).
\end{itemize}

All values are rounded down to a hundredth. Dash means that the corresponding value is zero.

To compare, the third-order projection-type method in \cite{Franco2020} has $\mathrm{CFL}_{\max} \approx 0.72$ (and one pressure solver call per timestep).

\section{Segregated RK schemes}
\label{sect:main}

Here and below, $\B{C}(t,u)$ is a discretization of terms that should be treated explicitly, and $\B{D}(t,u)$ is a discretization of terms that should be treated implicitly. For IMEX methods, which are of a primary concern in this paper, $\B{C} \equiv -\B{F}_{conv} + \B{F}_{source}$, and $\B{D} \equiv \B{F}_{diff}$.
Recall that $D$ stands for a divergence approximation while $\B{D}(t,u)$ stands for the implicit term.

Introduce the notation for the momentum residual:
$$
\B{R}(t,u,p) = -\B{F}_{conv}(u) + \B{F}_{diff}(u) + \B{F}_{source}(t) - Gp.
$$
Or, which is the same,
$$
\B{R}(t,u,p) = \B{C}(t,u) + \B{D}(t,u) - Gp.
$$

\subsection{Original SRK schemes}
\label{sect:originalSRK}

The segregated Runge~-- Kutta schemes were introduced in \cite{Colomes2015} for the case $S = 0$. The system \eqref{eq_momentun_base}, $D u = H$, is a DAE of index 2. 
Taking the derivative of $Du = H$ and substituting $du/dt$ from \eqref{eq_momentun_base} we get $D\B{R}(t,u,p) = \dot{H}$, which expands to
$$
p = (DG)^{-1} \left[ D(\B{C}(t,u) + \B{D}(t,u)) - \dot{H} \right].
$$
Here and below $\dot{H} \equiv dH/dt$. Taking this back to \eqref{eq_momentun_base} we obtain
\begin{equation}
\frac{du}{dt} = \B{C}(t,u) + \B{D}(t,u) - G (DG)^{-1} \left[ D(\B{C}(t,u) + \B{D}(t,u)) - \dot{H} \right].
\label{eq_momentum_LBB}
\end{equation}

A segregated Runge~-- Kutta scheme for \eqref{eq_momentun_base}, $D u = H$ is an IMEX RK method \eqref{eq_IMEX_1}--\eqref{eq_IMEX_3} with the substitution $y \equiv u$, 
$$
\xi(t,y) \equiv \B{C}(t,u) - G (DG)^{-1} \left[ D(\B{C}(t,u) + \B{D}(t,u)) - \dot{H} \right], \quad \eta(t,y) \equiv \B{D}(t,u).
$$

{\it Remark.} The term $\dot{H}$ appears here naturally. Consider, for instance, the finite-difference method described in Section~\ref{sect:FD2}. The values $\B{u}_j(t)$ of $\B{u}(t) \in V_h$ at boundary nodes are zero. The actual velocity field is $\B{u}_j(t)$ at the internal nodes and $\B{u}_b(t, \B{r}_j)$ at the boundary nodes. Its divergence is $D \B{u}(t) - H(t)$. Similarly, $\B{C}(t,u) + \B{D}(t,u)$ is an approximation of the time derivative of the velocity assuming that there is no pressure gradient; its value belongs to $V_h$ and therefore is zero for boundary nodes. At a boundary node $j$, the time derivative is $\D\B{u}_b/\D t(t, \B{r}_j)$. Then the divergence of the velocity derivative is approximated by $D(\B{C}+\B{D}) - \dot{H}$.

~\\

A shortcoming of this method is that the divergence-free condition is enforced in the incremental way and therefore the divergence error may accumulate over time. In \cite{Colomes2015} (see Remark 3.4), it is suggested to project the velocity onto the divergence-free space when the divergence residual exceeds some threshold. However, there is a more delicate way to solve this issue. Instead of applying $d/dt$ to $Du = H$, we can apply $\alpha + d/dt$, $\alpha > 0$. This is known as the Baumgarte method \cite{Baumgarte1972}. This yields $D(\B{R}(t,u,p) + \alpha u) = \dot{H} + \alpha H$, which expands to
\begin{equation}
p(t, u) = (DG)^{-1}   \left[D(\B{C}(t,u) + \B{D}(t,u)) - \dot{H}(t) + \alpha (Du - H(t))\right].
\label{eq_p_bad}
\end{equation}

The use of $\xi(t,y) \equiv \B{C}(t,u) - G p(t, u)$ with $p(t,u)$ given by \eqref{eq_p_bad} would ruin the accuracy, because the value of $u$ taken to $\xi$ satisfies the continuity equation with the error $O(\tau)$. Instead, we take the term $Du - H$ at the beginning of the timestep. Then a corrected SRK scheme for \eqref{eq_momentun_base}, $D u = H$ may be defined as an IMEX RK method \eqref{eq_IMEX_1}--\eqref{eq_IMEX_3} with the substitution $y \equiv u$, 
\begin{gather}
\xi(t,y) \equiv \B{C}(t,u) - G (DG)^{-1} \left[D(\B{C}(t,u) + \B{D}(t,u)) - \dot{H}(t) + \alpha \Xi^{n-1}\right], 
\label{eq_SRK_xi}
\\
\eta(t,y) \equiv \B{D}(t,u),
\label{eq_SRK_eta}
\end{gather}
with $\Xi^{n-1} = Du^{n-1} - H(t^{n-1})$.

To choose $\alpha$, consider the case $\B{C} \equiv \B{D} \equiv 0$, $H \equiv 0$, and assume that the solution is a potential velocity field: $u(t) = G\phi(t)$. Then the implicit term vanishes, and an SRK scheme reduces to its explicit part for the equation $d\phi/dt = - \alpha \phi^{n-1}$, which has the solution $\phi^n = (1-\alpha \tau) \phi^{n-1}$. For stability, we need to take $0 \le \alpha \le 2 \tau^{-1}$. A natural choice is $\alpha = \tau^{-1}$.

The use of the Baumgarte method is not necessary for the original SRK schemes. 
Its importance grows in presence of pressure stabilization because a projection operator on a divergence-free space is undefined.

\subsection{SRK methods in case of pressure stabilization}
\label{sect:stab}

In this section, for simplicity we assume that $S$ is invertible and $\sigma_1 > 0$. The final form of the scheme does not use $S^{-1}$. The case $\sigma_1 = 0$ is discussed in the next section.

The system \eqref{eq_momentun_base}, \eqref{eq_continuity_base} is a singularly perturbed ODE system. Standard methods for systems of the form
$y' = f(y,z)$, $\varepsilon z' = g(y,z)$ require $(\D g/\D z)^{-1}$ to exist and be bounded as $\varepsilon \rightarrow 0$. This requirement is not met in our case.

Instead, we apply the operator $\alpha + d/dt$, $\alpha > 0$, to \eqref{eq_continuity_base} and subsitute $du/dt$ from \eqref{eq_momentun_base}:
\begin{equation}
\frac{d{u}}{dt} = \B{R}(t,u,p),
\label{eq_bb1}
\end{equation}
\begin{equation}
\sigma_1 S\frac{d^2p}{dt^2} = -\sigma_0 S \frac{dp}{dt} -D\B{R}(t,{u},p) + \dot{H} - \alpha\!\left(D{u} + \sigma_0 S p + \sigma_1 S\frac{dp}{dt}-H\right).
\label{eq_bb2}
\end{equation}

Denote $q = dp/dt$. Then \eqref{eq_bb1}--\eqref{eq_bb2} may be transformed to the form 
\begin{equation}
\begin{gathered}
\frac{dy}{dt} = \xi(t,y) + \eta(t,y), \quad y = (u, p, q)^T,
\\
\xi(t,y) = \left(\begin{array}{c} \B{C}(t,{u}) - G p \\ 0 \\ 0 \end{array}\right),
\\
\eta(t,y) = \left(\begin{array}{c} \B{D}(t,{u}) \\ q \\ (\sigma_1 S)^{-1}(-\sigma_0 S q -D\B{R}(t,{u},p) + \dot{H}(t) - \alpha \Xi^{n-1}) \end{array}\right),
\\
\Xi = Du + \sigma_0 Sp + \sigma_1 S q - H(t).
\end{gathered}
\label{eq_xi_eta_for_IMEX}
\end{equation}
\begin{definition}
We define a segregated RK method for \eqref{eq_momentun_base}, \eqref{eq_continuity_base} as an IMEX RK method of the form \eqref{eq_IMEX_1}--\eqref{eq_IMEX_3} with the substitution \eqref{eq_xi_eta_for_IMEX} while taking $\Xi$ from the $(n-1)$-th layer.
\end{definition}

An implicit stage \eqref{eq_IMEX_1} with $a_{jj} \ne 0$ needs a further elaboration. Let $u_j, p_j, q_j$ be the components of $y_j$ and $u_{j,*}, p_{j,*}, q_{j,*}$ be the components of $y_{j,*}$. Then \eqref{eq_IMEX_1} expands to
\begin{gather}
u_j = u_{j,*} + \breve{\tau} \B{D}(t_j, u_j),
\label{eq_impl_stage1_bbb}
\\
p_j = p_{j,*} + \breve{\tau} q_j,
\label{eq_impl_stage3_bbb}
\\
\breve{\tau}^{-1} \sigma_1 S(q_j - q_{j,*}) 
= -\sigma_0 S q_j-D\B{R}(t_j, u_j, p_j) + \dot{H}(t_j) - \alpha \Xi^{n-1}.
\label{eq_impl_stage2_bbb}
\end{gather}
Equation \eqref{eq_impl_stage1_bbb} does not involve pressure, so each implcit stage with $a_{jj} \ne 0$ begins with solving \eqref{eq_impl_stage1_bbb} for $u_j$.

Taking $q_j$ from \eqref{eq_impl_stage3_bbb} to  \eqref{eq_impl_stage2_bbb} we get
\begin{equation*}
\begin{gathered}
(\breve{\tau}^{-2} \sigma_1 + \breve{\tau}^{-1} \sigma_0) S(p_j - p_{j,*})
= -D\B{R}(t_j, u_j, p_j) + \dot{H}(t_j) - \alpha \Xi^{n-1}  + \breve{\tau} \sigma_1 S q_{j,*}.
\end{gathered}
\end{equation*}
This transforms to
\begin{equation}
\begin{gathered}
(DG - (\breve{\tau}^{-2}\sigma_1 + \breve{\tau}^{-1} \sigma_0) S)(p_j - p_{j,*}) = 
\\
= D\B{R}(t_j, u_j, p_{j,*}) - \dot{H}(t_j) + \alpha \Xi^{n-1} - \breve{\tau}^{-1} \sigma_1 Sq_{j,*}.
\end{gathered}
\label{eq_impl_stage2_bbb3}
\end{equation}
After $p_j$ is evaluated, one finds $q_j \equiv K_j^p$ from  \eqref{eq_impl_stage3_bbb} and $K_j^q = (q_j - q_{j,*})/\breve{\tau}$ where $K_j^u$, $K_j^p$, $K_j^q$ are the components of $K_j$.

Recall that $S = DG - L$, and $L$ is the operator we want in the pressure equation. Equating the left-hand side of \eqref{eq_impl_stage2_bbb3} to $L (p_j - p_{j,*})$ we obtain
\begin{equation}
\breve{\tau}^{-2}\sigma_1 + \breve{\tau}^{-1}\sigma_0 = 1.
\label{eq_sigma0sigma1}
\end{equation}

In order to use one parameter instead of two ($\sigma_0$ and $\sigma_1$), we put
\begin{equation}
\sigma_0 = (1-r_{\sigma}) \breve{\tau}, \quad \sigma_1 = r_{\sigma} \breve{\tau}^2
\label{eq_sigma0sigma1_2}
\end{equation}
with $r_{\sigma} \in (0, 1]$. The case $r_{\sigma} = 0$ is special because the orders of differential equations \eqref{eq_continuity_base} and \eqref{eq_bb2} decrease by one; we discuss it in Section~\ref{sect:sigma10}.

For methods of type CK that are not of type ARS, the expression for $q_{j,*}$, $j \ge 2$, contains $K_1^q$. Since $a_{11}=0$ and $K_1 = \eta(t^{n-1}, y^{n-1})$ (see \eqref{eq_IMEX_1}, \eqref{eq_IMEX_2}), we cannot evaluate $q_{j,*}$ and $K_j^q$ without the use of $S^{-1}$. 

To mitigate this issue (and for methods of other types -- to reduce the computational workload), we construct expressions for $Sq_{j,*}$ and $SK_j^q$ without the use of $S^{-1}$. Write 
\begin{equation}
\breve{\tau}^{-1} \sigma_1 Sq_{j,*} = S \mu_j + \nu_j, \quad
\sigma_1 SK_j^q = S\tilde{\mu}_j - \tilde{\nu}_j.
\label{eq_def_muj_1}
\end{equation}
We look for recursive formulas for $\mu_j$, $\tilde{\mu}_j$, $\nu_j$, $\tilde{\nu}_j$. Applying $\breve{\tau}^{-1} \sigma_1 S$ to the last component of \eqref{eq_IMEX_1a} we get
$$
\breve{\tau}^{-1} \sigma_1 S q_{j,*} = \breve{\tau}^{-1} \sigma_1 S q^{n-1} + \tau \sum\limits_{k=1}^{j-1} a_{jk} \breve{\tau}^{-1} \sigma_1 S K_k^q,
$$
(there are no explicit terms in the equation for the pressure derivative). Equating the expressions at $S$ and without $S$ we get
\begin{equation}
\mu_j = \breve{\tau}^{-1} \sigma_1 q^{n-1} + \tau \breve{\tau}^{-1} \sigma_1 \sum\limits_{k=1}^{j-1} a_{jk} \tilde{\mu}_j, \quad
\nu_j = - \tau \breve{\tau}^{-1} \sum\limits_{k=1}^{j-1} a_{jk} \tilde{\nu}_j.
\label{eq_expr_muj}
\end{equation}

Consider a method of type CK. Since
$$
\sigma_1 SK_1^q = - \sigma_0 Sq -D\B{R}(t^{n-1}, u^{n-1}, p^{n-1})  + \dot{H}(t^{n-1}) - \alpha \Xi^{n-1},
$$
$$
SK_j^q = (Sq_j - Sq_{j,*})/\breve{\tau}, \quad j \ge 2,
$$
we write
\begin{equation}
\begin{gathered}
\tilde{\mu}_1 = - \sigma_0 q^{n-1} - \alpha (\sigma_0 p^{n-1} + \sigma_1 q^{n-1}),
\\
\tilde{\nu}_1 = D\B{R}(t^{n-1},u^{n-1},p^{n-1}) - \dot{H}(t^{n-1}) + \alpha (Du^{n-1} - H(t^{n-1})),
\\
\tilde{\mu}_j = \breve{\tau}^{-1} \sigma_1 q_j-\mu_j, \quad j \ge 2,
\\
\tilde{\nu}_j = \nu_j, \quad j \ge 2.
\end{gathered}
\label{eq_expr_muj_tilde}
\end{equation}

Then pressure equation \eqref{eq_impl_stage2_bbb3} takes the form
\begin{equation}
L(p_j - \tilde{p}_j) = D\B{R}(t_j, u_j, \tilde{p}_j) -
\dot{H}(t_j) + \alpha (Du^{n-1} - H(t^{n-1})) - \nu_j.
\label{eq_impl_stage2_bbb4}
\end{equation}
with
$$
\tilde{p}_j = p_{j,*} + {\mu}_j - \alpha (\sigma_0 p^{n-1} + \sigma_1 q^{n-1}).
$$

From \eqref{eq_expr_muj} and \eqref{eq_expr_muj_tilde}, it is easy to see that $\nu_j = d_j \tilde{\nu}_1$, $j \ge 2$, where $d_j$ are the solution-independent constants defined by
\begin{equation}
d_1 = 1, \quad d_j = - (a_{ss})^{-1} \sum\limits_{k=1}^{j-1} a_{jk} d_k.
\label{eq_def_d}
\end{equation}
Equation \eqref{eq_def_d} is equivalent to $\sum a_{jk} d_k = 0$. Therefore, the vector $(d_1, \ldots, d_s)^T$ is the vector from $\mathrm{Ker}\, A$ normalized by $d_1 = 1$. For all methods from Table~\ref{table:IMEX0}, $d_s = 0$ (which means that in the case $\B{C}(t,u) \equiv \B{D}(t,u) \equiv 0$, $H \equiv 0$, $\alpha = 0$, $S = 0$, the pressure at the end of timestep should be $p^n = 0$).

For the methods of type A, all conditions $j \ge 2$ should be replaced by $j \ge 1$, the special expressions for $\tilde{\mu}_1$ should be dropped, and $\nu_j = 0$ for all $j$. For methods of type ARS, $d_j  = 0$ for $j \ge 2$, so $\nu_j = 0$ and there is no need to evaluate $\tilde{\nu}_1$.~\\

In the majority of situations, the solution by a segregated Runga~-- Kutta method satisfies the discrete continuity equation exactly. This feature is inherited from the original SRK methods, see Proposition~3.2 in \cite{Colomes2015}. We adapt the statement and the proof of this proposition for our case.

\begin{proposition}\label{th:continuity}
Let $\Xi^n$ is defined by \eqref{eq_xi_eta_for_IMEX}. Assume that every component of $H(t)$ is a $p$-th order polynomial in time, the initial condition satisfies $\Xi^0 = 0$, the RK integrator integrates exactly polynomials of order $p-1$, and $b = \hat{b}$. Then the SRK method preserves the exact discrete divergence constraint at all time steps, i. e. $\Xi^n = 0$ for each $n$. 
\end{proposition}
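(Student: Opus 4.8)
The plan is to track the quantity $\Xi_j := Du_j + \sigma_0 S p_j + \sigma_1 S q_j - H(t_j)$ through the Runge--Kutta stages and show it vanishes at the end of the step whenever it vanished at the start. First I would write the update for $\Xi$ along the method: since $y^n = y^{n-1} + \tau\sum_k b_k K_k + \tau\sum_k \hat b_k \hat K_k$ and the components of $\hat K_k$ in the $p$ and $q$ slots are zero, applying the linear functional ``$D(\cdot)_u + \sigma_0 S(\cdot)_p + \sigma_1 S(\cdot)_q$'' to this combination yields, using $b=\hat b$ and $c=\hat c$,
\begin{equation*}
\Xi^n = \Xi^{n-1} + \tau \sum_{k=1}^s b_k \Bigl( D K_k^u + \sigma_0 S K_k^p + \sigma_1 S K_k^q \Bigr) - \bigl( H(t^n) - H(t^{n-1}) \bigr).
\end{equation*}
The key is that the bracketed quantity is, by construction, precisely the residual of equation \eqref{eq_bb2} evaluated at stage $k$ with $\Xi$ frozen at the $(n-1)$-th layer: indeed the $q$-slot of $\eta$ was defined so that $\sigma_1 S K_k^q = -\sigma_0 S q_k - D\B{R}(t_k,u_k,p_k) + \dot H(t_k) - \alpha\Xi^{n-1}$, while $K_k^u = \B{C}+\B{D} - Gp_k = \B{R}(t_k,u_k,p_k)$ on the explicit-plus-implicit combination and $K_k^p = q_k$. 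Substituting, the $D\B{R}$ terms cancel between $DK_k^u$ and $\sigma_1 SK_k^q$, leaving $DK_k^u + \sigma_0 S K_k^p + \sigma_1 S K_k^q = \dot H(t_k) - \alpha\Xi^{n-1}$.

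Next I would plug this back in. We get
\begin{equation*}
\Xi^n = \Xi^{n-1} + \tau \sum_{k=1}^s b_k \dot H(t_k) - \tau \alpha \Xi^{n-1} \sum_{k=1}^s b_k - \bigl(H(t^n) - H(t^{n-1})\bigr).
\end{equation*}
Now invoke the hypotheses: $\sum_k b_k = 1$ (consistency), so the $\Xi^{n-1}$ terms combine to $(1-\alpha\tau)\Xi^{n-1}$; and since $\dot H$ is a polynomial of degree $p-1$ in $t$ and the quadrature $\sum_k b_k f(t_k)$ with abscissae $t_k = t^{n-1}+c_k\tau$ integrates such polynomials exactly over $[t^{n-1},t^n]$ (this is the ``integrates exactly polynomials of order $p-1$'' assumption), we have $\tau\sum_k b_k \dot H(t_k) = \int_{t^{n-1}}^{t^n}\dot H\,dt = H(t^n) - H(t^{n-1})$. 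Hence $\Xi^n = (1-\alpha\tau)\Xi^{n-1}$, and with $\Xi^0 = 0$ we conclude $\Xi^n = 0$ for all $n$ by induction. (In fact $\Xi^n = (1-\alpha\tau)^n \Xi^0$ even without $\Xi^0=0$, which explains the stability condition on $\alpha$ mentioned earlier.)

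The main obstacle is bookkeeping rather than depth: one must be careful that in the type CK / non-ARS case the $q$-slot increments $SK_k^q$ are exactly the ones realized by the algorithm via the $\mu_j,\nu_j,\tilde\mu_j,\tilde\nu_j$ substitution rather than some modified quantity — i.e. one should check that the auxiliary recursions \eqref{eq_def_muj_1}--\eqref{eq_expr_muj_tilde} reproduce $\sigma_1 S K_k^q$ faithfully (they do, since they were derived by applying $S$ to the exact defining relations), so that the cancellation of $D\B{R}$ above is genuine and not an artifact of the $S^{-1}$-free reformulation. A secondary point to state carefully is why $K_k^u = \B{R}(t_k,u_k,p_k)$: the explicit part contributes $\B{C}(t_k,u_k) - Gp_k$ and the implicit part contributes $\B{D}(t_k,u_k)$, and in the $\Xi$-update the coefficients $b_k$ multiply the sum $\hat K_k + K_k$ in the $u$-slot precisely because $b=\hat b$, which is where that hypothesis is used a second time.
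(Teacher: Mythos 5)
Your proof is correct and follows essentially the same route as the paper: apply $D$ and the stabilization terms to the final update, use $b=\hat b$ to combine $K_k^u+\hat K_k^u$ into $\B{R}(t_k,u_k,p_k)$, cancel $D\B{R}$ against the stage equation for $\sigma_1 S K_k^q$, and invoke exact quadrature of $\dot H$. The only difference is cosmetic: you obtain the slightly stronger recursion $\Xi^n=(1-\alpha\tau)\Xi^{n-1}$, whereas the paper assumes $\Xi^{n-1}=0$ from the outset and checks that the $p$- and $q$-differences vanish by the definitions of $p^n$ and $q^n$.
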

\begin{proof}
Assume that $\Xi^{n-1} = 0$. The velocity component of \eqref{eq_IMEX_3} is
$$
u^n = u^{n-1} + \tau \sum\limits_{k=1}^s b_k K_k^u + \tau \sum\limits_{k=1}^s \hat{b}_k \hat{K}_k^u.
$$
By assumption $b_k = \hat{b}_k$, $k = 1, \ldots, s$. By definition, $K_k^u + \hat{K}_k^u = \B{R}(t_k, u_k, p_k)$. Thus,
$$
u^n = u^{n-1} + \tau \sum\limits_{k=1}^s b_k \B{R}(t_k, u_k, p_k).
$$
Apply $D$ to this equation. Take $Du^{n-1}$ from \eqref{eq_xi_eta_for_IMEX} using $\Xi^{n-1} = 0$, $Du^n$ also from \eqref{eq_xi_eta_for_IMEX}, and $D\B{R}(t_k, u_k, p_k)$ from \eqref{eq_impl_stage2_bbb}. After reordering terms we obtain
$$
\Xi^n = \left(H(t^n) - H(t^{n-1}) - \tau \sum\limits_{k=1}^s b_k \dot{H}(t_k)\right) + \sigma_0 S \left(p^n - p^{n-1} - \tau \sum\limits_{k=1}^s b_k q_k\right) + 
$$
$$
+ \sigma_1 S\left(q^n - q^{n-1} - \tau \breve{\tau}^{-1} \sum\limits_{k=1}^s b_k (q_k - q_{k,*})\right).
$$
The first term is zero by assumption ($H(t)$ is a $p$-th orer polynomial, which is integrated exactly). The last two terms are zero by construction (equating them to zero we get the definitions of $p^n$ and $q^n$). Thus, $\Xi^n = 0$.
\end{proof}

{\it Remark.} An SRK method calls the pressure solver $\sigma_p = s$ times per timestep if the underlying IMEX RK scheme is of type A and $\sigma_p = s-1$ times if it is of type CK. This may cause confusion regarding the notation. For instance, BHR(5,5,3) calls the pressure solver $\sigma_p = 4$ times per timestep while $\sigma_{im} = \sigma_{ex} = 5$. We preserve the original naming of the methods. 

{\it Remark.} Let us discuss the case $S = 0$. If the underlying IMEX method is of type A, then we recover the original SRK schemes \cite{Colomes2015} (for $\alpha=0$) or their versions with the Baumgarte correction (for $\alpha \ne 0$, see Section~\ref{sect:originalSRK}). However, if the IMEX method is of type CK, taking $S = 0$ does not yield the original SRK scheme. The difference is in the first stage ($j=1$). Since $a_{11} = 0$, then $a_{11}^{-1}S$ is a zero-by-zero uncertainty. Original SRK schemes resolve it by assuming $a_{11}^{-1}S = 0$. Our methods do this by assuming $a_{11}^{-1}S = \infty$, i. e. puts $p_1 = p_{1,*} \equiv p^{n-1}$. Therefore, the pressure solver is skipped for $j=1$.

\subsection{The case $\sigma_1 = 0$}
\label{sect:sigma10}

Let us discuss the case $r_{\sigma} = 0$, i. e. $\sigma_0 = \breve{\tau}$, $\sigma_1= 0$. It is special because \eqref{eq_bb2} is now a first order ODE and not a second order ODE. Therefore, we do not need to consider the pressure derivative as a separate variable. 

For methods of type A and ARS, following the way described in the Section~\ref{sect:stab} but with $y = (u, p)^T$, we come to the same formulas with the substitution $\sigma_0 = \breve{\tau}$, $\sigma_1= 0$. Note that we do not need to specify the value of $q^{n-1}$.

For methods of type CK that are not of type ARS, the situation is different. In the expression of $\tilde{\mu}_1$ in \eqref{eq_expr_muj_tilde}, the term $q^{n-1}$ appears. In constrast, going with $y = (u, p)^T$, one may find that $p_{j,*}$ and $q_j$ are not well-defined if $S$ is degenerate. To mitigate this issue, we may put
$$
\breve{\tau}^{-1} \sigma_0 Sp_{j,*} = S \mu'_j + \nu_j, \quad
\sigma_0 Sq_j = S\tilde{\mu}'_j - \tilde{\nu}_j.
$$
We omit the detailed derivation of the method because is resembles the one in Section~\ref{sect:stab}. The result is
$$
\mu'_j = p^{n-1} + \tau \sum\limits_{k=1}^{j-1} a_{jk} \tilde{\mu}'_j, 
\quad
\tilde{\mu}'_1 = -\alpha \breve{\tau} p^{n-1},
\quad
\tilde{\mu}'_j = p_j - \mu'_j, \quad j \ge 2,
$$
and $\tilde{p}_j = \mu'_j - \alpha\breve{\tau} p^{n-1}$. All other formulas the same as in Section~\ref{sect:stab}.

Proposition~\ref{th:continuity} also extends to this case.

\subsection{Final form of an SRK scheme}

We come to the following form of segregated Runge~-- Kutta schemes based on IMEX methods \eqref{eq_IMEX_1}--\eqref{eq_IMEX_3} for the time integration of \eqref{eq_momentun_base}, \eqref{eq_continuity_base} with operators $D$, $G$, $L$ satisfying \eqref{eq_gen_assumptions}. Let $r_{\sigma}$ be the stabilization type parameter. Here we consider two extreme cases, $r_{\sigma}=0$ and $r_{\sigma}=1$. 

Let $u^{n-1}$, $p^{n-1}$, $q^{n-1}$ be the solution at $t^{n-1}$, and $\tau = t^{n} - t^{n-1}$ be the timestep. We assume that the underlying IMEX method is (1) stiffly accurate and (2) either SDIRK or ESDIRK. Let $j_0 = 1$ for methods of type A and $j_0 = 2$ otherwise. Denote $\breve{\tau} = a_{ss} \tau$. Let $d_k$ be defined by
$$
d_1 = 1, \quad d_j = -  (a_{ss})^{-1} \sum\limits_{k=1}^{j-1} a_{jk} d_k, \quad j = 2, \ldots, s.
$$

1. For methods of type CK (including ones of type ARS), evaluate
$$
\hat{K}_1^u = \B{C}(t^{n-1}, u^{n-1}) - Gp^{n-1}.
$$
For methods of type CK that are not of type ARS, also put $q_1 = q^{n-1}$ (for $r_{\sigma}=1$) and evaluate
$$
K_1^u = \B{D}(t^{n-1}, u^{n-1}),
$$
$$
\tilde{\nu}_1 = D(\hat{K}_1^u + K_1^u) - \dot{H}(t^{n-1}) + \alpha (D u^{n-1} - H(t^{n-1})).
$$

2. For each $j = j_0, \ldots, s$:

2.1. Put $t_j = t^{n-1} + \tau c_j$, $c_j = \sum_{k=1}^j a_{jk}$.

2.2. Implicit velocity step: define
$$
u_{j,*} = u^{n-1} + \tau \sum\limits_{k=1}^{j-1} (a_{jk} K_k^u + \hat{a}_{jk} \hat{K}_k^u),
$$
solve
$$
u_j = u_{j,*} + \breve{\tau} \B{D}(t_j, u_j),
$$
and put $K_j^u = \breve{\tau}^{-1}(u_j - u_{j,*})$.

2.3. Evaluate the explicit velocity fluxes: 
$$
C_j^u = \B{C}(t_j, u_j).
$$

2.4 Pressure step: define
$$
\mu_j = (1 - a_{j1} \alpha \tau)^{j_0-1} w^{n-1} + (a_{ss})^{-1} \sum\limits_{k=j_0}^{j-1} a_{jk} \tilde{\mu}_k
$$
where $w^{n-1} = p^{n-1}$ for $r_{\sigma}=0$ and $w^{n-1} = \breve{\tau} q^{n-1}$ for $r_{\sigma}=1$. 
For methods of type CK (excluding ARS), put $\nu_j = d_j \tilde{\nu}_1$, otherwise put $\nu_j = 0$. For $r_{\sigma}=0$, put $\hat{p}_{j,*} = 0$, and for $r_{\sigma}=1$, put
$$
\tilde{p}_{j,*} = p^{n-1} + \tau \sum\limits_{k=1}^{j-1} a_{jk} q_k.
$$
Put $\tilde{p}_j = \tilde{p}_{j,*}+\mu_j - \alpha \breve{\tau}w^{n-1}$.
Solve
$$
L (p_j - \tilde{p}_j) = D(K_j^u + C_j^u - G \tilde{p}_j) - \dot{H}(t_j) + \alpha (D u^{n-1} - H(t^{n-1})) - \nu_j.
$$
Put
$$
\tilde{\mu}_j = p_j - (\tilde{p}_{j,*}+\mu_j).
$$
For $r_{\sigma}=1$, put $q_j = (p_j - \tilde{p}_{j,*})/\breve{\tau}$.

2.5. Evaluate
$$
\hat{K}_j^u = C_j^u - Gp_j.
$$

3. Put $p^n = p_s$, $q^n = q_s$ (for $r_{\sigma}=1$ only), and evaluate
$$
u^n = u^{n-1} + \tau \sum\limits_{k=1}^{s} (b_k K_k^u + \hat{b}_{k} \hat{K}_k^u).
$$

\subsection{Example: forward-backward Euler method}
\label{sect:firstorder}

As an example, we consider an SRK method based on ARS(1,2,1) (i. e. on the forward-backward Euler method \eqref{eq_Butcher_Euler}). For simplicity, assume \mbox{$H(t) \equiv 0$}.

First we evaluate
\begin{equation}
\hat{K}_1^u = \B{C}(t^{n-1}, u^{n-1}) - G p^{n-1}.
\label{eq_FOM0}
\end{equation}

In the implicit velocity step, we write
$$
u_{2,*} = u^{n-1} + \tau \hat{K}_1^u,
$$
$$
u_2 = u_{2,*} + \tau \B{D}(t^{n}, u_2), \quad K_2^u = (u_2 - u_{2,*})/\tau.
$$
This expands to
\begin{equation}
\frac{u_2 - u^{n-1}}{\tau} = \B{C}(t^{n-1}, u^{n-1}) + \B{D}(t^{n}, u_2) - G p^{n-1},
\label{eq_FOM1}
\end{equation}
which means that $u_2$ coincides with the predictor velocity in a conventional first order projection-type method.

The second part of the implicit stage is the pressure update. Since the backward Euler method is stiffly accurate, $p_2$ coincides with $p^n$ and (for $r_{\sigma} = 1$) $q_2$ coincides with $q^n$. The pressure equation has the form
\begin{equation}
L(p^n-\tilde{p}) = D\B{R}(t^{n}, u_2, \tilde{p}) + \alpha D u^{n-1}
\label{eq_FOM2a}
\end{equation}
where $\tilde{p} = (1 - \alpha\tau) p^{n-1}$ if $r_{\sigma} = 0$ and $\tilde{p} = p^{n-1}+\tau (1 - \alpha\tau) q^{n-1}$ if $r_{\sigma} = 1$. 


At the second explicit stage we evaluate
$$
\hat{K}_2^u = \B{C}(t^{n}, u_2) -  G p^n.
$$

Finally, the velocity at the new time layer is
\begin{equation}
u^n = u^{n-1} + \tau (K_2^u + \hat{K}_2^u).
\label{eq_FOM3}
\end{equation}

The most natural choice is $\alpha = \tau^{-1}$. Then we have $\tilde{p} = 0$ or $\tilde{p} = p^{n-1}$ depending on the case. Then \eqref{eq_FOM2a} and \eqref{eq_FOM3} reduce to
$$
L(p^n-\tilde{p}) = \tau^{-1} D(\tilde{u}^{n}), \quad u^n = \tilde{u}^{n} - G (p^n-\tilde{p})
$$
with $\tilde{u}^{n}$ defined by
$$
\frac{\tilde{u}^{n} - u^{n-1}}{\tau} = \B{C}(t^{n}, u_2) + \B{D}(t^n, u_2) - G \tilde{p}
$$
and $u_2$ defined by \eqref{eq_FOM1}. Thus, SRK schemes based on ARS(1,2,1) with the choice $\alpha = \tau^{-1}$ belong to the class of pressure-correction schemes. Taking $r_{\sigma}=0$ we obtain a total pressure method (also called a non-incremental pressure correction method) and $r_{\sigma}=1$ we obtain an incremental pressure correction method.

\subsection{Spectral analysis}
\label{sect:spectral2}

To conclude the description of SRK schemes, it remains to specify the range of parameter $\alpha$. By the dimensional consideration, $\alpha \sim \tau^{-1}$. Therefore, if the timestep is small enough, then the suppression of potential velocity modes are quicklier than the convection and diffusion. Therefore, as a necessary stability condition, we may check the case $\B{C}(t, u) \equiv 0$, $\B{D}(t, u) \equiv 0$, $H(t) \equiv 0$. Under these assumptions, the divergence-free velocity modes are steady and correspond to zero pressure, so they are of no interest. Therefore, we consider only potential velocity modes, $u^n = \tau G \phi^n$.

Assume that $w$ is a common eigenvector of $DG$ and $L$, and let $\beta \in [0,1]$ be defined by \eqref{eq_def_Omega}: $DG w = (1-\beta) L w$. Then $Sw = -\beta Lw$. We are looking for solutions of the form $q^{n} = \hat{q}^{n} w$, $p^{n} = \hat{p}^{n} w$, $u^{n} = \tau G w \hat{\phi}^{n}$. Then
$$
\left(\begin{array}{c} \hat{q}^n \\ \hat{p}^n \\ \hat{\phi}^n \end{array}\right) = M_A \left(\begin{array}{c} \hat{q}^{n-1} \\ \hat{p}^{n-1} \\ \hat{\phi}^{n-1} \end{array}\right)
\quad \mathrm{or} \quad
\left(\begin{array}{c} \hat{p}^n \\ \hat{\phi}^n \end{array}\right) = M_A \left(\begin{array}{c} \hat{p}^{n-1} \\ \hat{\phi}^{n-1} \end{array}\right)
$$
(the latter is for the case $\sigma_1 = 0$). We want to find the range of $\alpha$ such that all eigenvalues $\lambda$ of the amplification matrix $M_A$ satisfy $|\lambda| \le 1$.

Applying our assumptions is equivalent to the following changes: $\B{C}, \B{D} \to 0$, $H \to 0$, $\tau \to 1$, $u \to \hat{\phi}$, $p \to \hat{p}$, $q \to \hat{q}$, $G \to 1$, $L^{-1}D \to (1-\beta)$.

Doing the analysis of the general case by hand is difficult. We study some particular cases below. Before doing this, we analyze the methods listed in Table~\ref{table:IMEX0} numerically. The results are as follows:
\begin{itemize}
\item for each method from Table~\ref{table:IMEX0} that satisfies $b = \hat{b}$, there holds $(\alpha \tau)_{\max} = 2$ for both $r_{\sigma} = 0$ and $r_{\sigma} = 1$;
\item for ARS(1,1,1) we have $(\alpha \tau)_{\max} = 1$ for both $r_{\sigma} = 0$ and $r_{\sigma} = 1$;
\item for ARS(2,2,2) we have $(\alpha \tau)_{\max} \approx 0.82$ for $r_{\sigma} = 0$, and for $r_{\sigma} = 1$ the amplification matrix has an eigenvalue $\lambda$, $|\lambda|>1$, for any $\alpha \ne 0$;
\item for ARS(4,4,3) we have $(\alpha \tau)_{\max} = 2$ for $r_{\sigma} = 0$ and $(\alpha \tau)_{\max} \approx 1.43$ for $r_{\sigma} = 1$.
\end{itemize}

As an example, consider methods of type CK at two extreme cases, $\beta = 0$ and $\beta = 1$. We begin with the no-stabilization limit ($\beta = 0$). There holds
\begin{equation}
\hat{p}_j = \alpha \hat{\phi}^{n-1} - d_j (\alpha \hat{\phi}^{n-1} - \hat{p}^{n-1}), \quad j \ge 2.
\label{eq_hat_pj}
\end{equation}
Since $d_s = 0$, we have $\hat{p}^n = \hat{p}_s = \alpha \hat{\phi}^{n-1}$. Furthermore,
$$
\hat{\phi}^n = \hat{\phi}^{n-1} - \hat{b}_1 \hat{p}^{n-1} - \sum\limits_{k=2}^s \hat{b}_k \hat{p}_k.
$$
Taking $\hat{p}_j$ from \eqref{eq_hat_pj}, 
$$
\hat{\phi}^n = \hat{\phi}^{n-1} - \hat{b}_1 \hat{p}^{n-1} - \sum\limits_{k=2}^s \hat{b}_k (\alpha \hat{\phi}^{n-1} - d_k (\alpha \hat{\phi}^{n-1} - \hat{p}^{n-1})).
$$
There holds $\sum_{k=1}^s \hat{b}_k = 1$ (first order condition). Denote $\theta = \sum_{k=1}^s \hat{b}_k d_k$. If $b = \hat{b}$, then $\theta = 0$ ($b_k = a_{sk}$ and $d$ is orthogonal to each row of $A$ by construction). Then we have
$$
\hat{\phi}^n = (1-\alpha +\alpha\theta)\hat{\phi}^{n-1} - \theta \hat{p}^{n-1}.
$$
It can also be shown that $\hat{q}^n$ does not depend on $\hat{q}^{n-1}$ (provided that $d_s = 0$). Therefore, the amplification matrix has the form
$$
M_A = \left(\begin{array}{ccc} 0 & * & * \\ 0 & 0 & \alpha \\ 0 & -\theta & 1-\alpha +\alpha\theta \end{array}\right),
$$
and we can write a necessary stability condition:
$$
\alpha \le \frac{1}{\max\{\theta, 1/2 - \theta\}}.
$$
In particular, if $b = \hat{b}$, we have $\theta=0$, and the condition reduces to $\alpha \le 2$.

Now consider the case $\beta = 1$, which corresponds to checkerboard oscillations, for the methods of type CK. In this case, stage pressure values do not depend on $\hat{\phi}^{n-1}$. We have
$$
\hat{\tilde{\mu}}_j = - a_{ss} \alpha \hat{w}^{n-1},
\quad
\hat{\mu}_j = (1 - (c_j - a_{ss}) \alpha) \hat{w}^{n-1},
\quad
\hat{q}_j = a_{ss}^{-1} (1 - c_j \alpha) \hat{w}^{n-1}
$$
where $\hat{w}^{n-1} = \hat{p}^{n-1}$ for $r_{\sigma} = 0$ and $\hat{w}^{n-1} = a_{ss} \hat{q}^{n-1}$ for $r_{\sigma} = 1$. Thus, for the case $r_{\sigma} = 0$, we have $\hat{p}_s = (1 - \alpha) \hat{p}^{n-1}$ and the amplification matrix
$$
M_A = \left(\begin{array}{cc} 1 - \alpha  & 0 \\ * & 1 \end{array}\right).
$$
The stability condition is $\alpha \le 2$. For the case $r_{\sigma} = 1$, we have
$$
\hat{q}_j = (1 - c_j \alpha) \hat{q}^{n-1}, \quad \hat{p}_j = \hat{p}^{n-1} + \left(\ldots\right) \hat{q}^{n-1}.
$$
So the amplification matrix has the form
$$
M_A = \left(\begin{array}{ccc} 1 - \alpha  & 0 & 0 \\ * & 1 & 0 \\ * & * & 1 \end{array}\right).
$$
The eigenvalues satisfy $|\lambda| \le 1$ if $0 \le \alpha \le 2$. However, there is an eigenvalue $\lambda = 1$ of multiplicity 2. Numerical evaluation shows that the values marked by stars are indeed nonzero. Thus, $\|M_A^n\|$ grows linearly with $n$, and, from the formal point of view, these methods are unstable. However, this instability is only linear (and not exponential), and we expect that a small amount of viscosity (which always exists in practical cases) is enough to stabilize the computations.

\section{Verification}
\label{sect:verification}

In all numerical experiments, we put $\alpha \tau = 0.5 (\alpha \tau)_{max}$ where $(\alpha \tau)_{max}$ is discussed in Section~\ref{sect:spectral2}. 


\subsection{Manufactured analytical solution}

The verification of SRK methods in presense of pressure stabilization is tricky. Dealing with methods with no pressure stabilization, one can start with the non-stiff case (for instance, by choosing a coarse mesh and a small timestep) and verify that the convergence order coincides with the expected one. In our case, as we discussed in Section~\ref{sect:stabtypes}, reducing the timestep changes coefficients in front of the stabilization operator, so long-wave potential velocity modes can never be resolved. There is no non-stiff limit.

We begin with a small test proposed in \cite{Colomes2015}. The idea is to use a solution where all spatial approximations are exact, so that the solution error originates from the time integration only.

Let $\Omega = (0,1)^2$, $\nu = 0.1$, and the exact solution be
\begin{equation}
\tilde{\B{u}}(t,x,y) = \left(\begin{array}{c} x \\ -y \end{array}\right) g(t), \quad \tilde{p}(t,x,y) = x+y
\label{eq_ctest_exactsol}
\end{equation}
with $g(t) = \exp(t/25) \sin(\pi t/10)$. The Dirichlet boundary conditions $\B{u}_b(t,x,y) = \tilde{\B{u}}(t,x,y)$ are prescribed on $\D \Omega$, and the initial data are prescribed for $t=0$. The source term $\B{f}_{source}(t,\B{r})$ is defined so that \eqref{eq_ctest_exactsol} satisfies \eqref{eq_NS_2}. The computational mesh is uniform, with 10 elements per each direction. At $t = t_{\max} = 0.1$, we evalutate the solution error
\begin{equation}
e_u = \max\limits_{j \in \bar{\mathcal{N}}} |\B{u}_j(t) - \tilde{\B{u}}(t,\B{r}_j)|, \quad
e_p = \max\limits_{j \in \bar{\mathcal{N}}} |p_j(t) - \tilde{p}(t,\B{r}_j)|
\label{eq_def_eu_ep}
\end{equation}
where $\bar{\mathcal{N}}$ is the set of mesh nodes and $\B{r}_j$ is the raduis-vector of node $j \in \bar{\mathcal{N}}$.

We use the finite-difference discretization described in Section~\ref{sect:FD2}. Convective fluxes are approximated with the basic (``second-order'') central differences based on the skew-symmetric splitting. For the viscous term, we use the standard 5-point Laplace approximation. The timestep size is $\tau = 0.1 \cdot 2^{-n}$, $n = 0, \ldots, 4$.

We consider explicit, IMEX, and implicit SRK methods. In the explicit and IMEX methods, we take the source term to the explicit part. In the implicit SRK methods, we take the source term to the implcit part. Although the source term does not depend on the numerical solution, this treatment affects the overall accuracy.

\begin{figure}[p]
\centering
\includegraphics[width=0.3\textwidth]{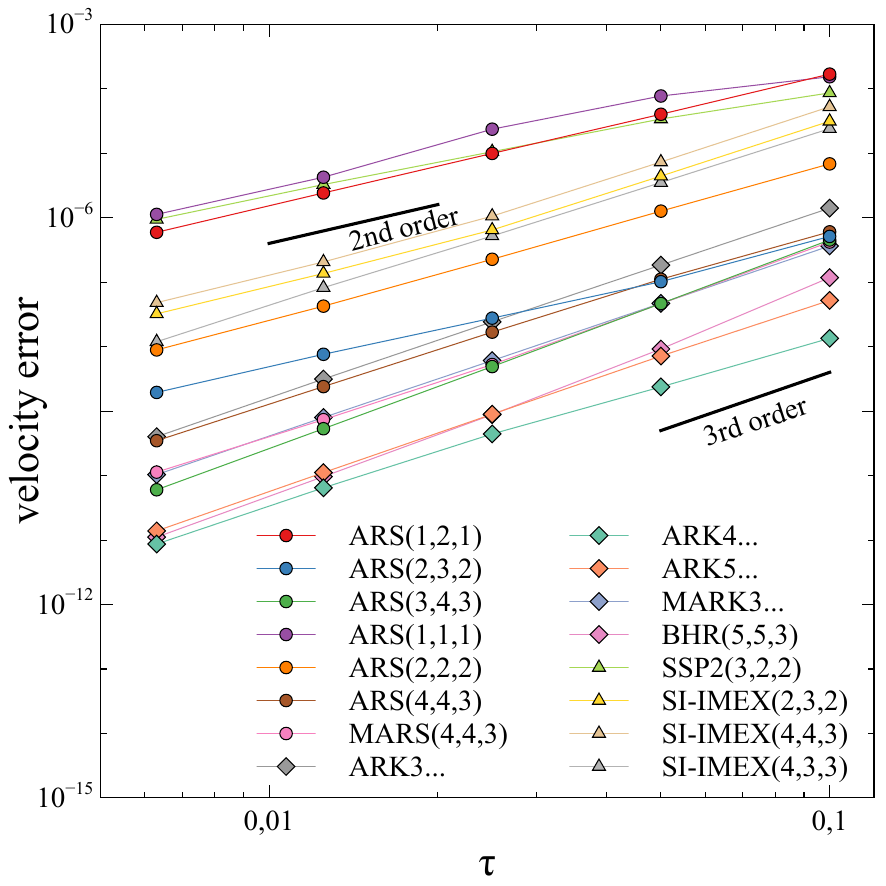}
\includegraphics[width=0.3\textwidth]{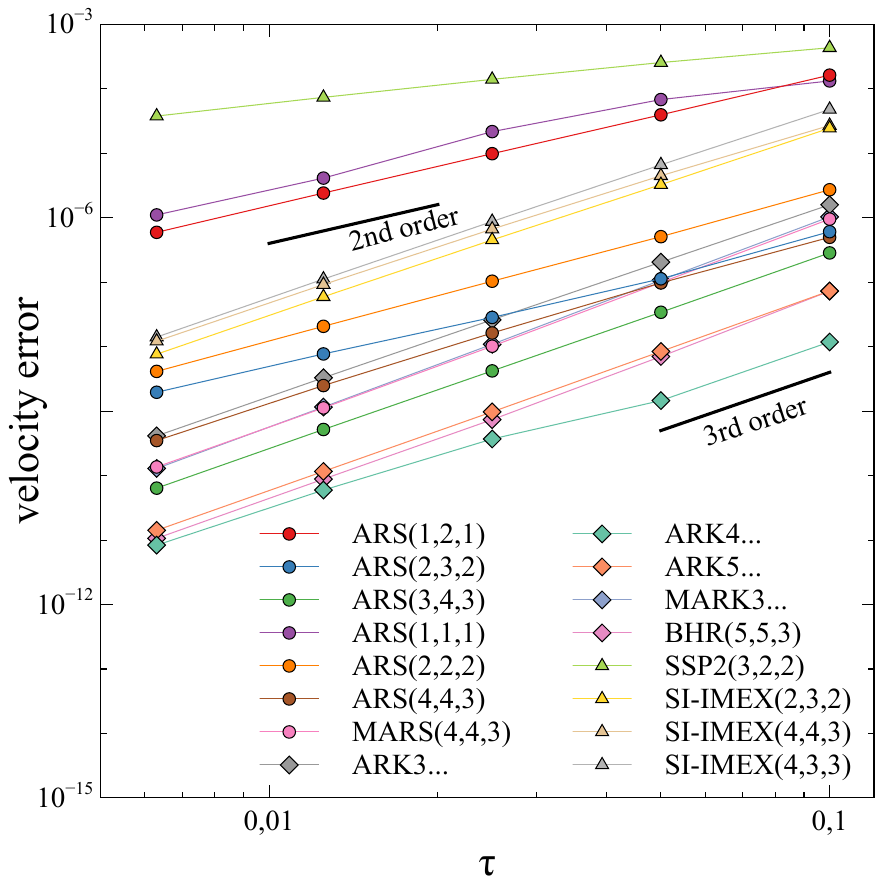}
\includegraphics[width=0.3\textwidth]{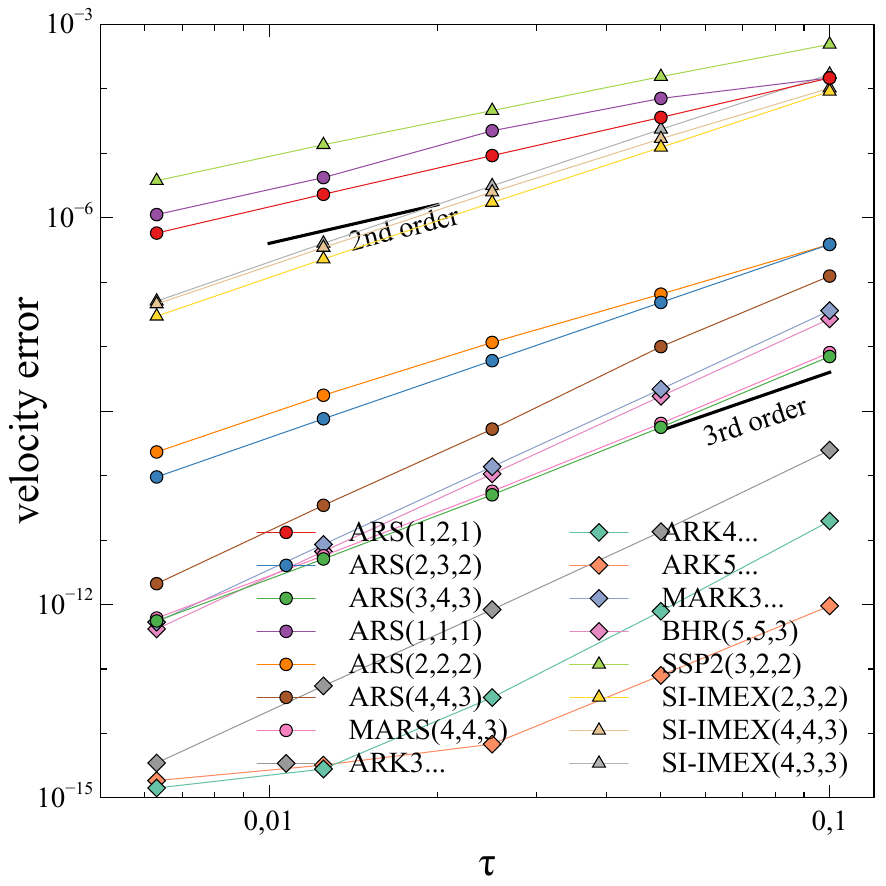}
\includegraphics[width=0.3\textwidth]{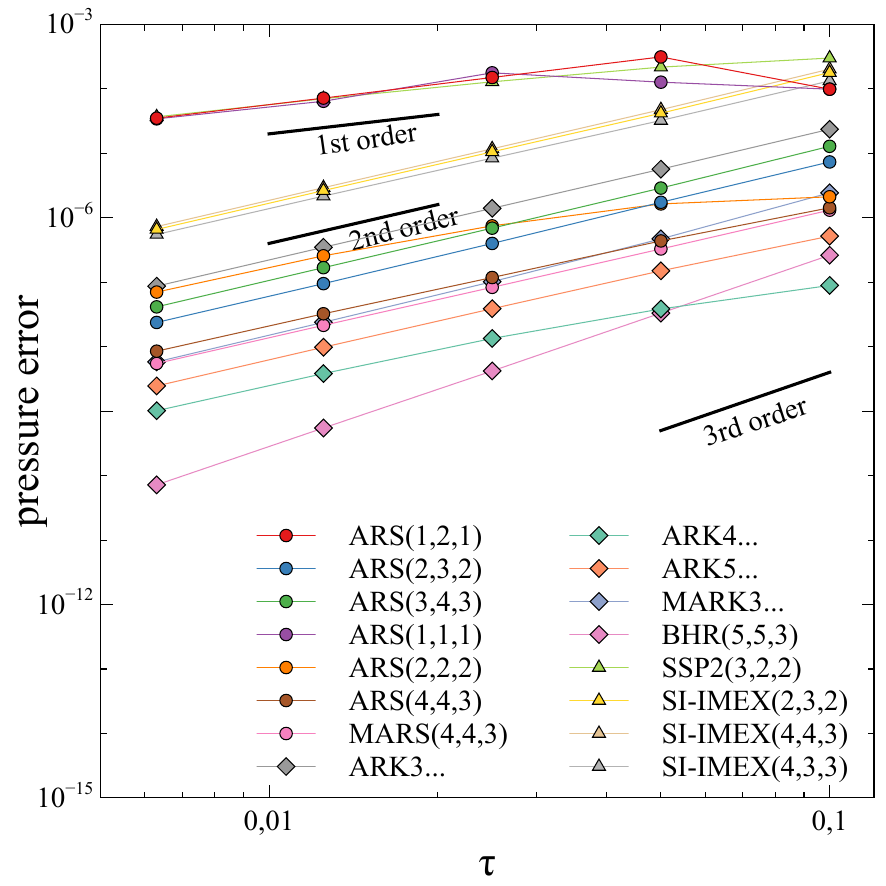}
\includegraphics[width=0.3\textwidth]{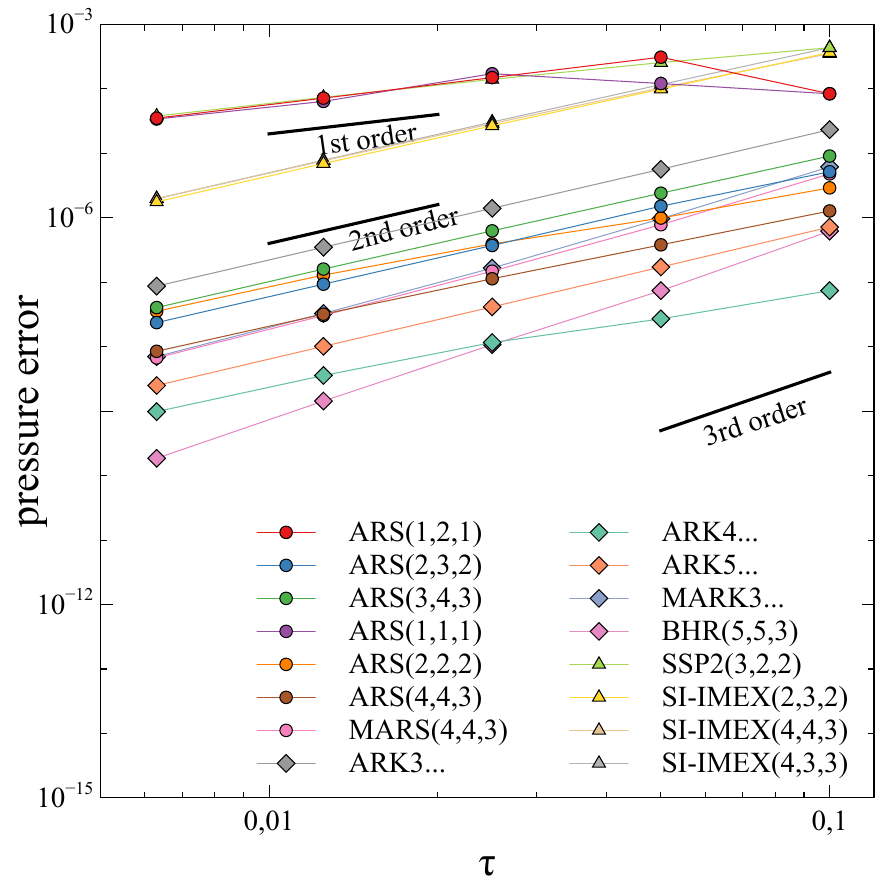}
\includegraphics[width=0.3\textwidth]{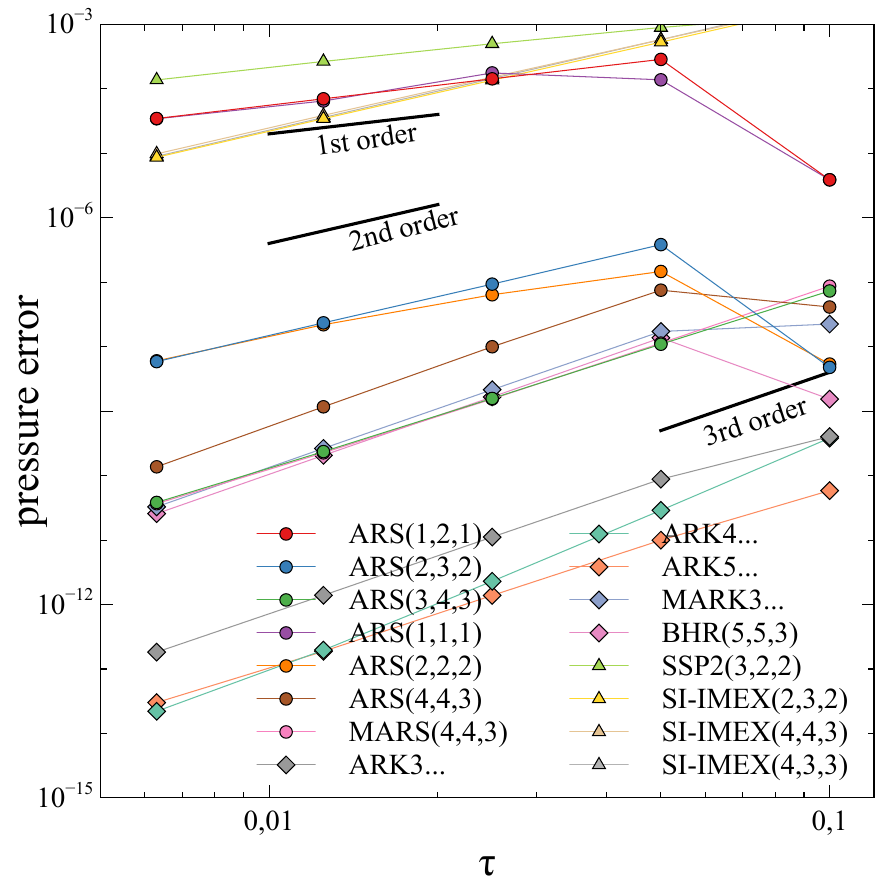}
\caption{Solution error for the test with a manufactured solution. $r_{\sigma} = 0$. Left to right: explicit SRK schemes, IMEX SRK schemes, implicit SRK schemes. Top: $e_u$; bottom: $e_p$}\label{fig:results_ctest1}
~\\~\\

\centering
\includegraphics[width=0.3\textwidth]{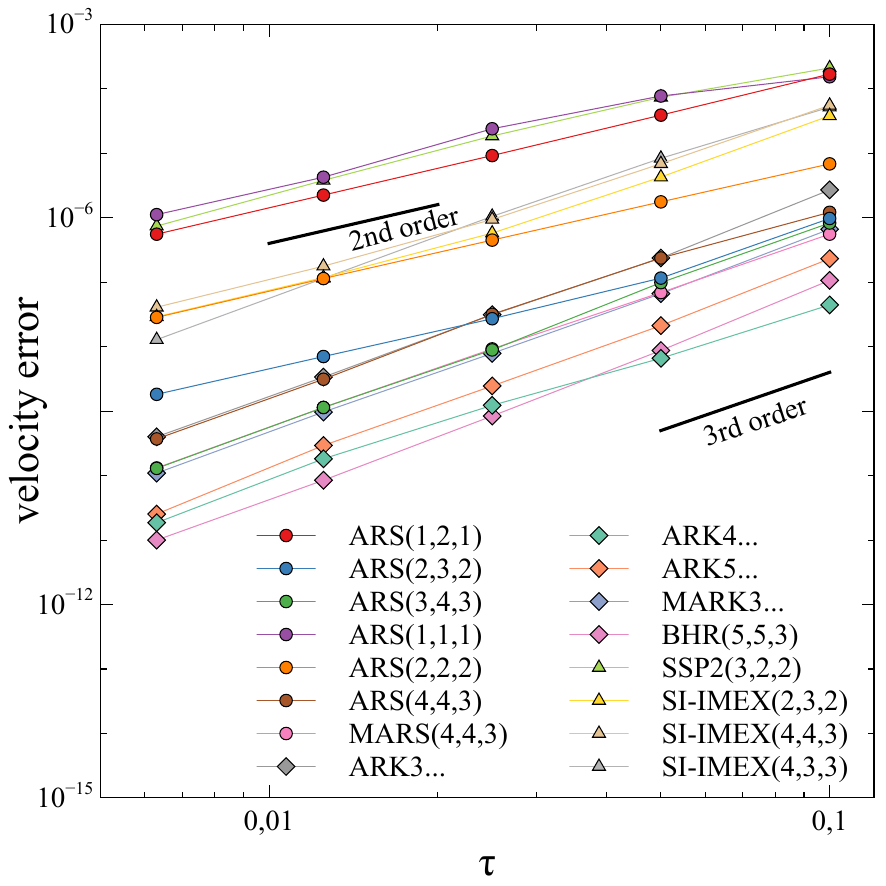}
\includegraphics[width=0.3\textwidth]{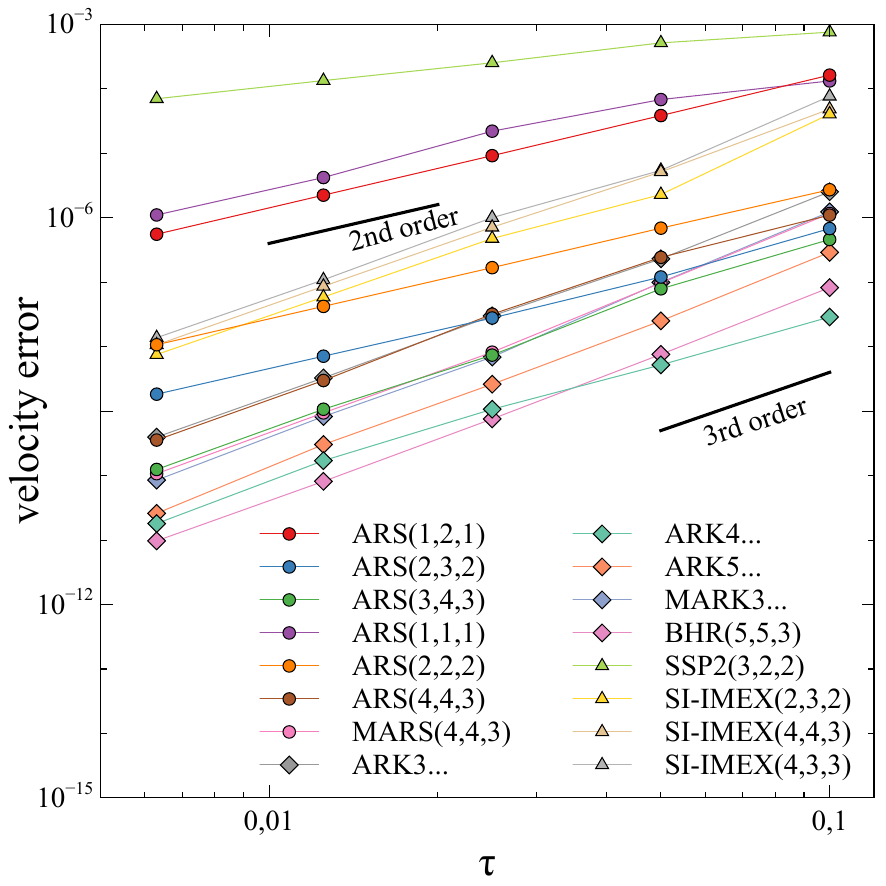}
\includegraphics[width=0.3\textwidth]{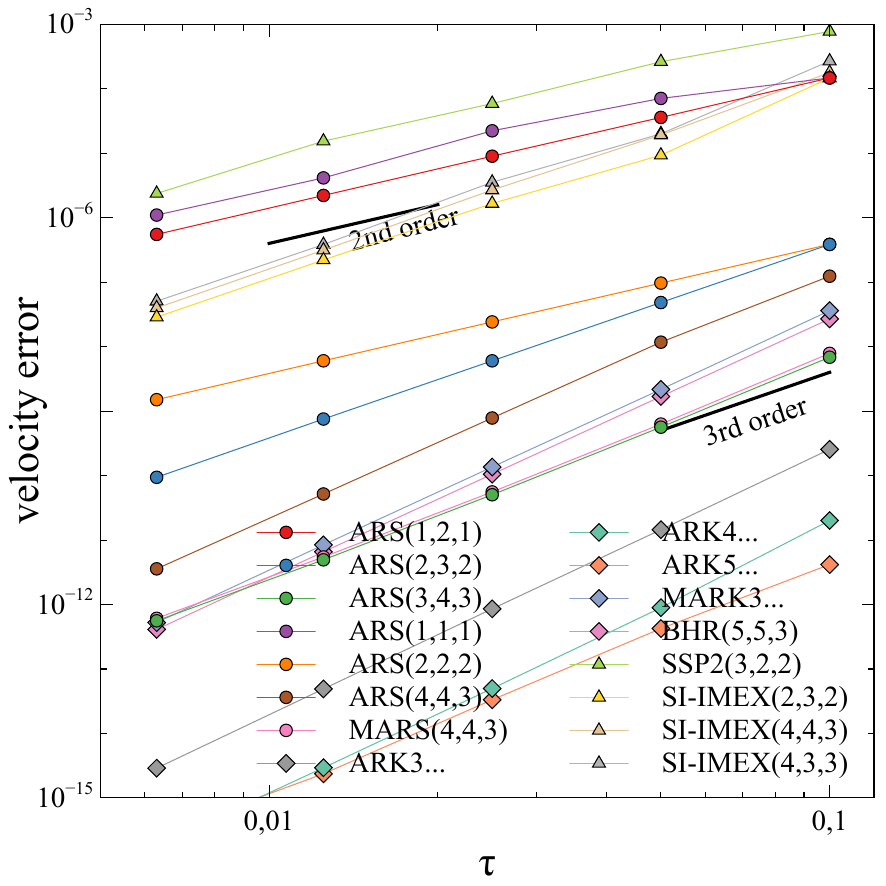}
\includegraphics[width=0.3\textwidth]{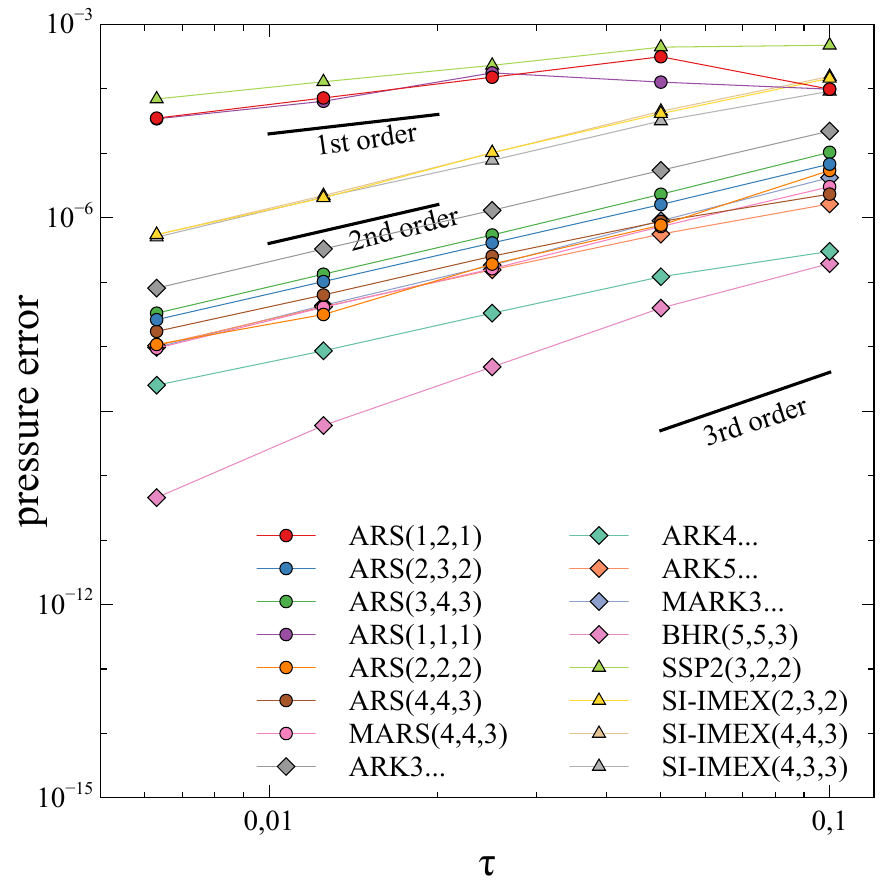}
\includegraphics[width=0.3\textwidth]{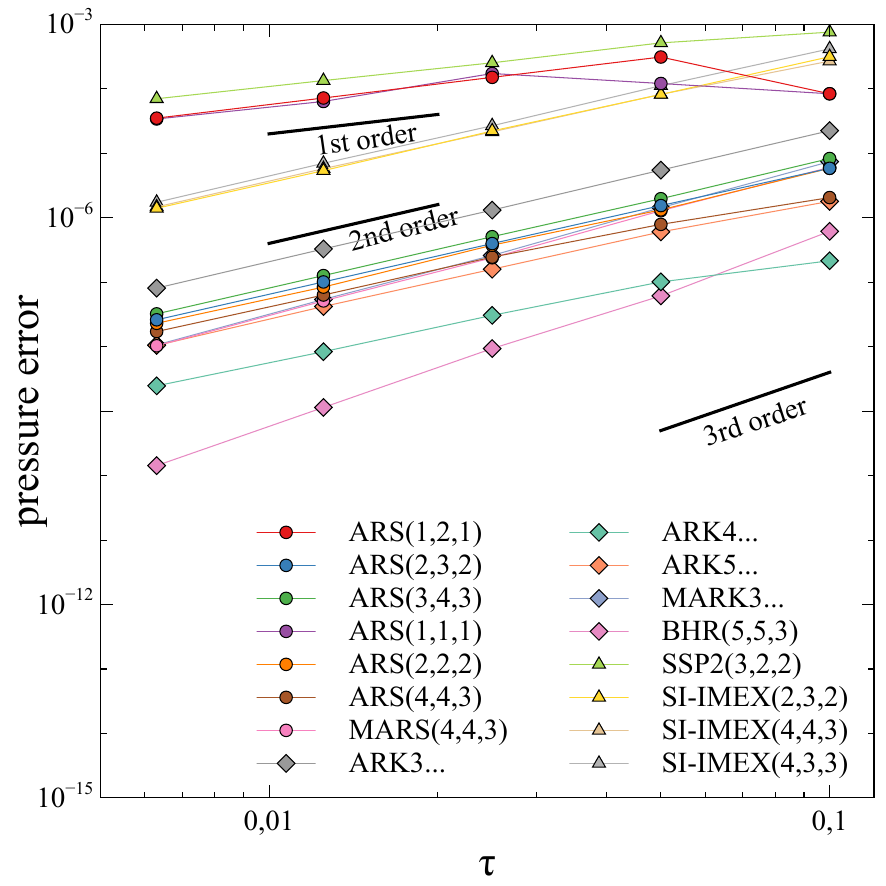}
\includegraphics[width=0.3\textwidth]{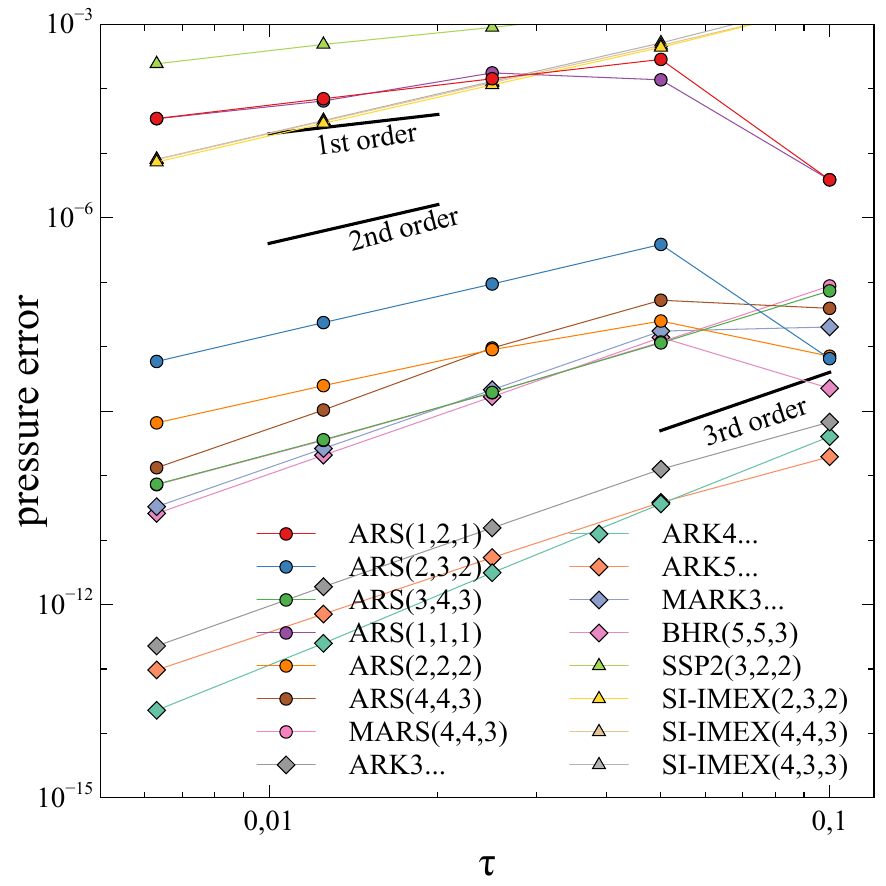}
\caption{Solution error for the test with a manufactured solution. $r_{\sigma} = 1$. Left to right: explicit SRK schemes, IMEX SRK schemes, implicit SRK schemes. Top: $e_u$; bottom: $e_p$}\label{fig:results_ctest2}
\end{figure}

The results for the temporal convergence are presented in Fig.~\ref{fig:results_ctest1} for the case $r_{\sigma} = 0$ and in Fig.~\ref{fig:results_ctest2} for the case $r_{\sigma} = 1$. They reveal several effects.
\begin{itemize}
\item Methods of type A of the form \eqref{eq_IMEX_1}--\eqref{eq_IMEX_3} show a poor accuracy. Although the numerical convergence rate is close to 2 (and not to 1 as we expected basing on \cite{Boscarino2007}), the absolute value of the solution error is big. This motivates to use a different IMEX formulation for this method, see Section~\ref{sect:IMEXv2}.
\item Most of the methods show the second order convergence for pressure. This is known as the order reduction. The only method showing the third order convergence is BHR(5,5,3) \cite{Boscarino2009}. This matches the results from \cite{Colomes2015}.
\item In our experiments, implicit SRK schemes usually show better accuracy than explicit and IMEX SRK schemes. The reason is that the term $\dot{H}(t)$ naturally appears at the implicit stage. So the use of an implicit SRK scheme reduces the discrepancy between the internal and boundary nodes. This effect is peculiar for this model problem.
\end{itemize}

In the following sections, we do not consider methods of type A, first order methods (ARS(1,2,1) and ARS(1,1,1)), and the method ARS(2,2,2) because it is not stable for $r_{\sigma}=1$ and $\alpha>0$.

\subsection{2D vortex, viscous-dominant case}
\label{sect:2Dvisc}

Let $\Omega = (0,2\pi)^2$ with the periodic conditions, viscosity be $\nu = 0.5$. The Navier~-- Stokes equations admit the following solution (travelling Taylor~-- Green vortex):
$$
\tilde{\B{u}}(t, x, y) = \left(\begin{array}{c} \bar{u} + \sin (x - \bar{u} t) \cos (y - \bar{v} t) \\ \bar{v} -\cos (x - \bar{u} t) \sin (y - \bar{v} t) \\ 0 \end{array}\right) \exp(-2\nu t),
$$
$$
\tilde{p}(t, x, y) = \frac{1}{4}(\cos(2(x - \bar{u} t)) + \cos(2(y - \bar{v}t))) \exp(-2\nu t).
$$
We put $\bar{u} = 1$, $\bar{v} = 0$. At $t = t_{\max} = 2$, we evaluate the solution error $e_u$ and $e_p$ defined by \eqref{eq_def_eu_ep}. Since $h \ll 1$, we have $\nu/h \gg \max|\tilde{\B{u}}|$, so the viscosity dominates.

For this case, we use uniform Cartesian meshes with steps $h = 2\pi/N$, $N = 32, 64, 128$. For the gradient and Laplace operators, we use the 6-th order approximations defined in Section~\ref{sect:FD}. For the convection, we use 6-th order central difference scheme based on the skew-symmetric splitting, and for the viscous term, we use the 6-th order approximation on the same stencil. The spatial approximation error is very small, so the time integration error dominates.

For explicit SRK schemes, we put $\tau = (2h^{-1} + 6\nu h^{-2})^{-1}$, which may be considered as the use of CFL $=2$ (the biggest eigenvalue of the discrete $-\nu \triangle$  operator is $12\nu h^{-2}$). For IMEX and implicit SRK schemes, we drop the timestep restriction given by viscosity, and put $\tau = (2h^{-1})^{-1} = h/2$, which corresponds to CFL $=1$.

The results with $r_{\sigma} = 0$ and $r_{\sigma} = 1$ are visually identical. They are shown in Fig.~\ref{fig:TGV2DVisc}. Horizontal axis corresponds to the mesh step $h$.

\begin{figure}
\centering
\includegraphics[width=0.3\textwidth]{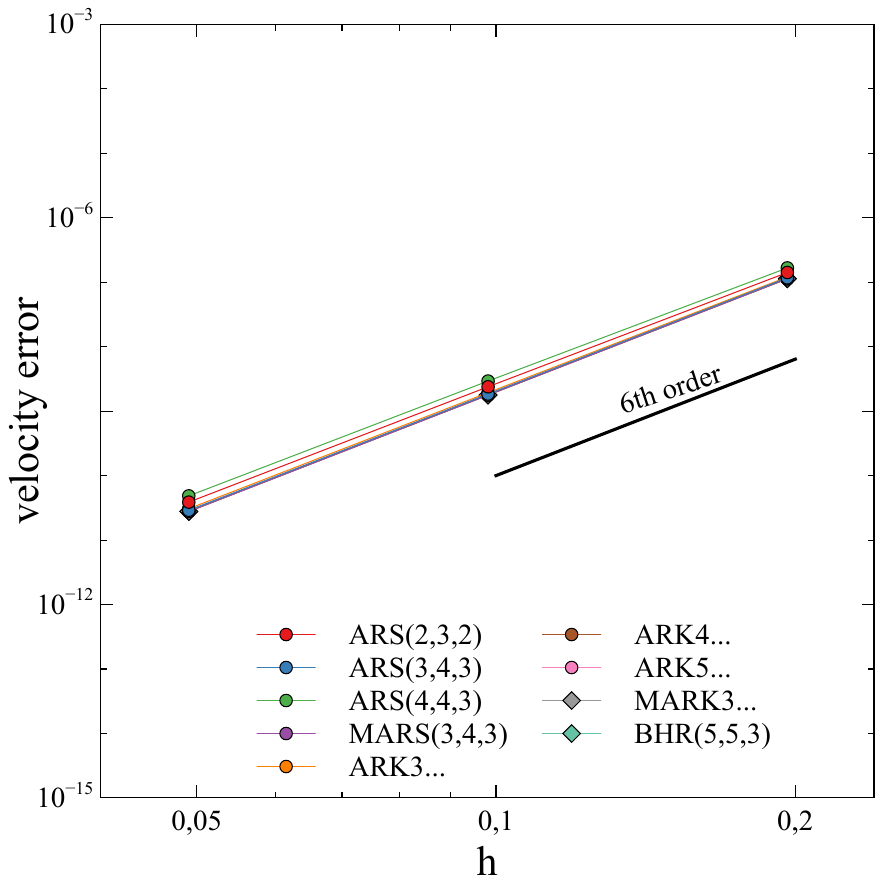}
\includegraphics[width=0.3\textwidth]{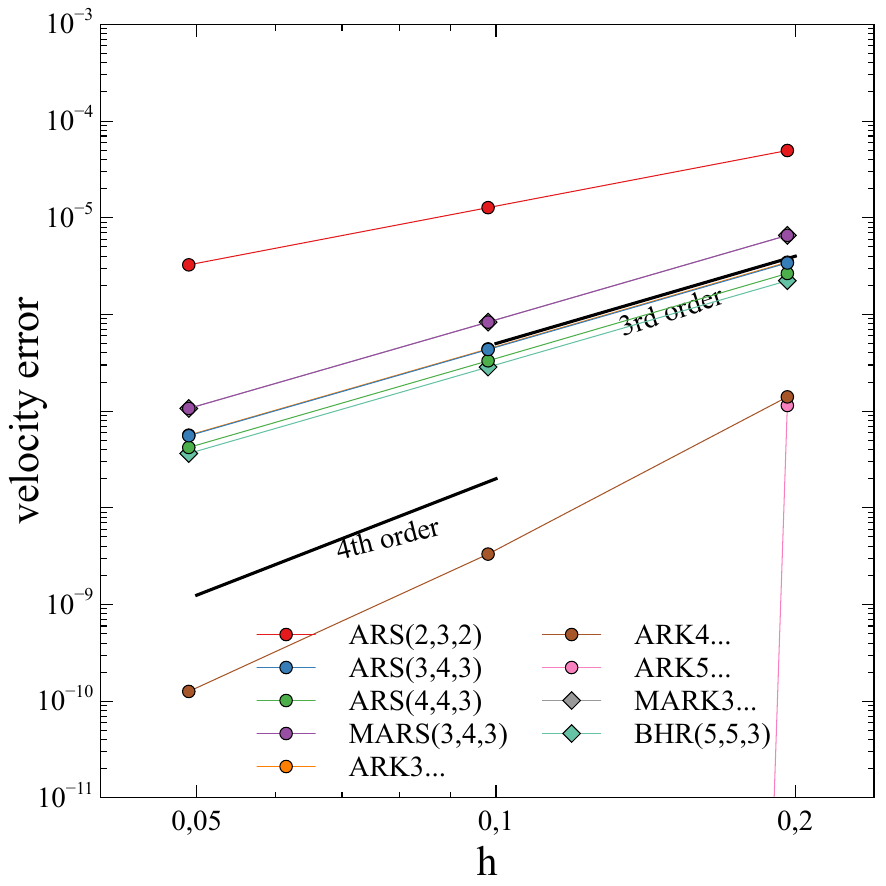}
\includegraphics[width=0.3\textwidth]{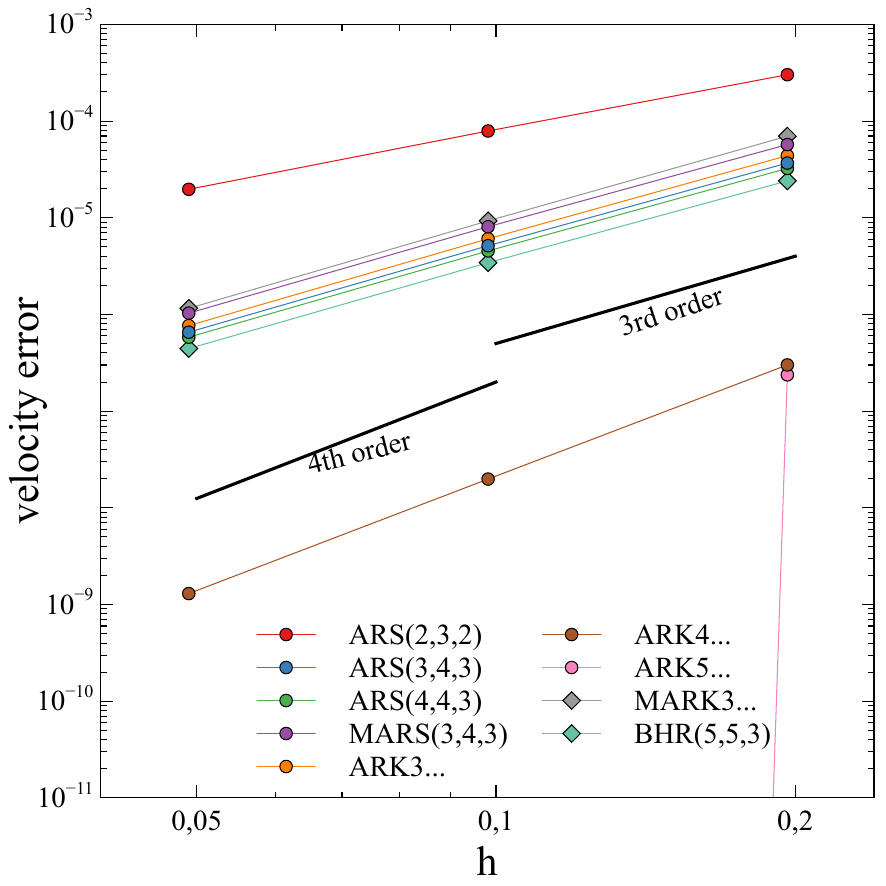}
\includegraphics[width=0.3\textwidth]{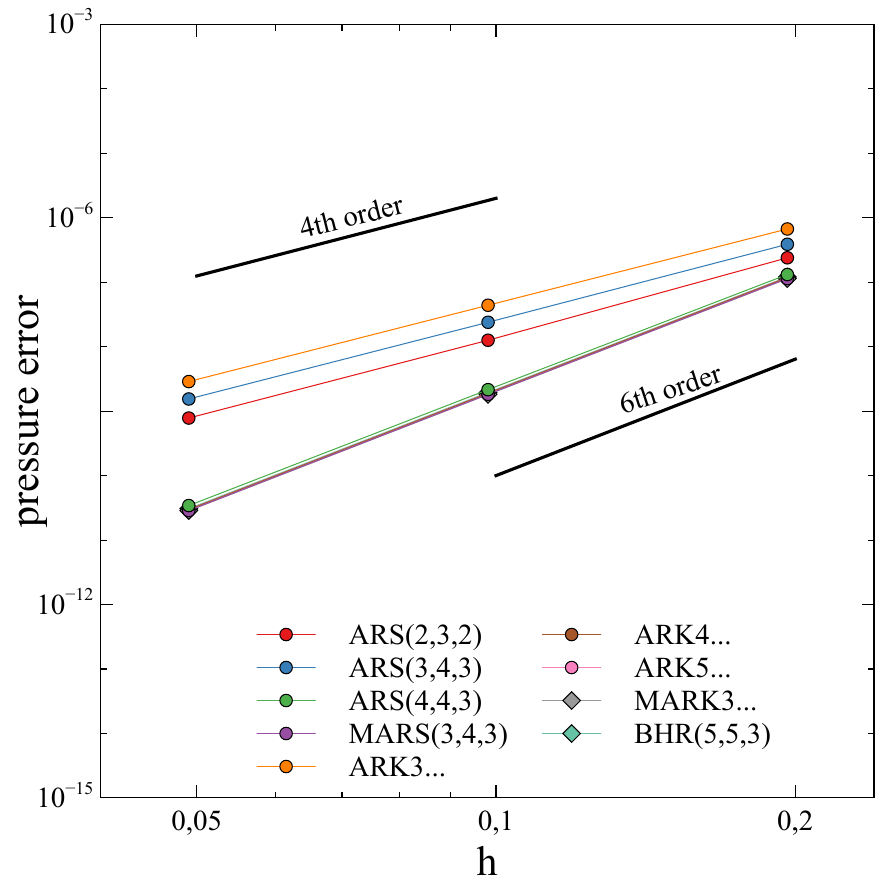}
\includegraphics[width=0.3\textwidth]{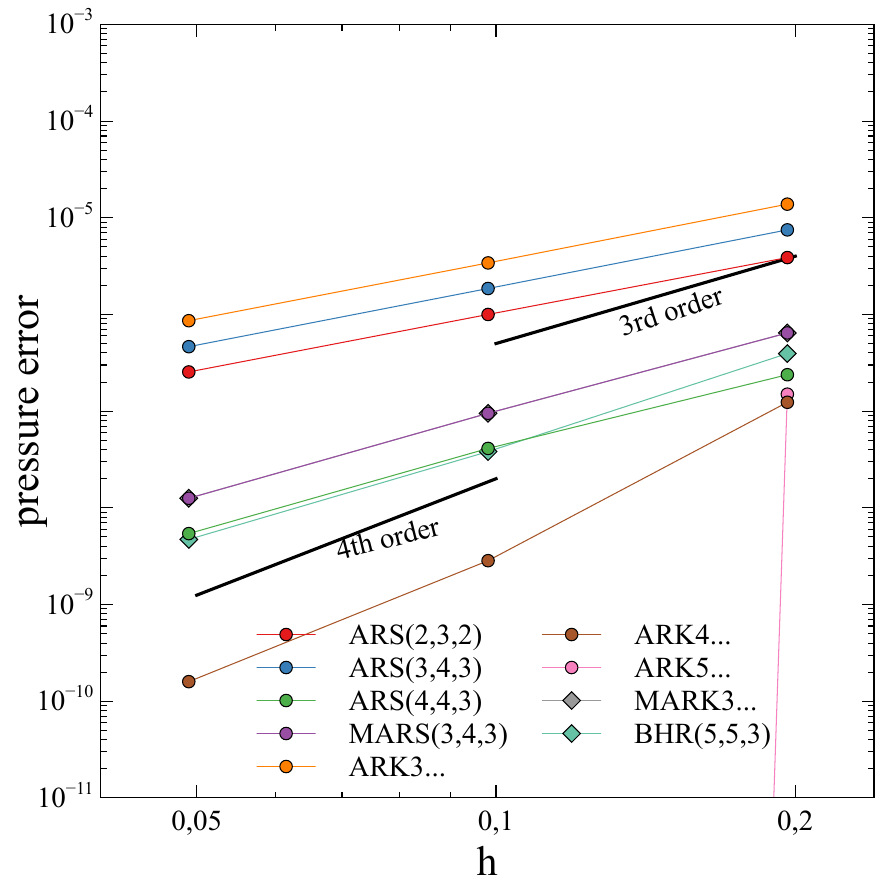}
\includegraphics[width=0.3\textwidth]{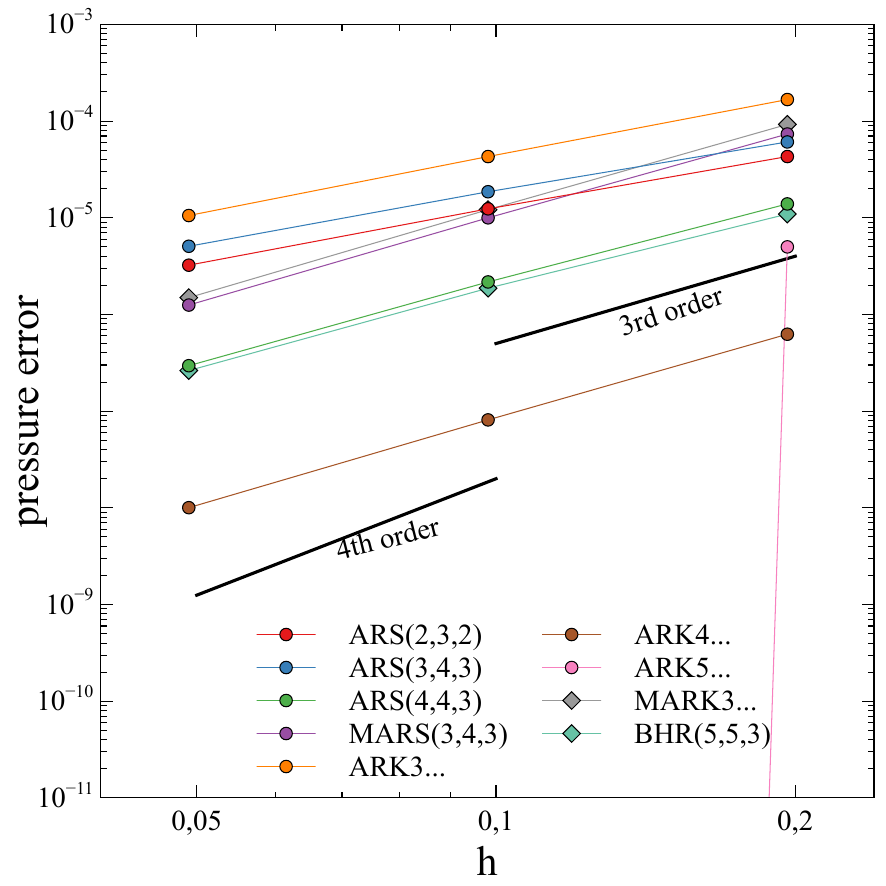}
\caption{Solution error for the Taylor~-- Green vortex with $\nu = 0.5$. Left to right: explicit SRK schemes, IMEX SRK schemes, implicit SRK schemes. Top: $e_u$; bottom: $e_p$}\label{fig:TGV2DVisc}
\end{figure}

For explicit schemes, we see the slopes corresponding to the 4th and 6th orders. The reason is $\tau \sim h^2$, so the orders in time are actually 2 and 3. For the velocity, time integration error is comparable to the spatial error. For the pressure, the third order convergence with nearly the same values is shown by ARS(4,4,3), MARS(3,4,3), ARK4(3)6L[2]SA, ARK5(4)8L[2]SA, MARK3(2)4L[2]SA, and BHR(5,5,3).

The situation is different for IMEX and in implicit SRK schemes. Here the timestep is much bigger, $\tau = h/2$ instead of $\tau \sim h^2$. Most of the schemes show second or third order depending on the variable, specific scheme, and whether an IMEX or an implicit method is used. The scheme ARK4(3)6L[2]SA shows a drastic advantage over other methods. The computations with ARK5(4)8L[2]SA blow up. Although this scheme has a more restrictive condition (see Table~\ref{table:IMEX0}), a reasonable reduction of the timestep size does not improve the situation, so the exact reason of its failure is not clear.


In our computations, we used a FFT-based solver for pressure but an AMG solver for velocity. We noticed that in the IMEX case, some methods (namely, MARK3(2)4L[2]SA, ARK4(3)6L[2]SA, especially for $r_{\sigma} = 1$) are more sensitive than other methods to the tolerance passed to the AMG solver. This might be a drawback of using a well-tuned time integration method. This effect is still to be studied.

\subsection{3D Taylor~-- Green vortex, inviscid}
\label{sect:TGV_inviscid}

Now we assess how well the zero-divergence condition is preserved as well as the leak of the kinetic energy do to the time integration error. In doing so, we follow \cite{Lehmkuhl2019} and consider an inviscid flow in $\Omega = (0, 2\pi)^3$ with the periodic boundary conditions. The initial data are prescribed by the 3D Taylor~-- Green vortex:
\begin{equation}
\begin{gathered}
\tilde{\B{u}}(t, x, y, z) = \left(\begin{array}{c} \cos x \sin y \sin z \\ - \sin x \cos y \sin z \\ 0 \end{array}\right),
\\
\tilde{p}(t,x,y,z) = (\cos 2x + \cos 2y)(2 + \cos 2z) / 16.
\end{gathered}
\label{eq_TGV}
\end{equation}

We restrict our analysis to the schemes ARS(3,4,3) and ARK4(3)6L[2]SA. We put $\tau = h$ for $r_{\sigma} = 0$ and $\tau = 0.7h$ for $r_{\sigma} = 1$. The spatial discretization is the same as in Section~\ref{sect:2Dvisc}.

Define the norm of the velocity divergence as $\|R_0\|^2 = |\mathcal{N}|^{-1}\sum_j (R_0)_j^2$ with $R_0 = L^{-1}D\B{u}$. The values of $\|R_0(t)\|$, $0 \le  t \le 6$, are shown in Fig.~\ref{fig:TGV3DNoVisc} (top). The continuity equation is satisfied with the accuracy at most $10^{-16}$ (in the same norm) after each timestep (which verifies Proposition~\ref{th:continuity}), so we do not show it on the graph. For small $t$, the velocity divergence is very small and quickly decreases as \mbox{$h \rightarrow 0$}, which corresponds to the 6-th order discretization (for a smooth enough pressure field $p$, $\|Sp\| = O(h^6)$). As the small-scale pulsations arise, the pressure stabilization terms become more significant.

\begin{figure}
\centering
\includegraphics[width=0.3\textwidth]{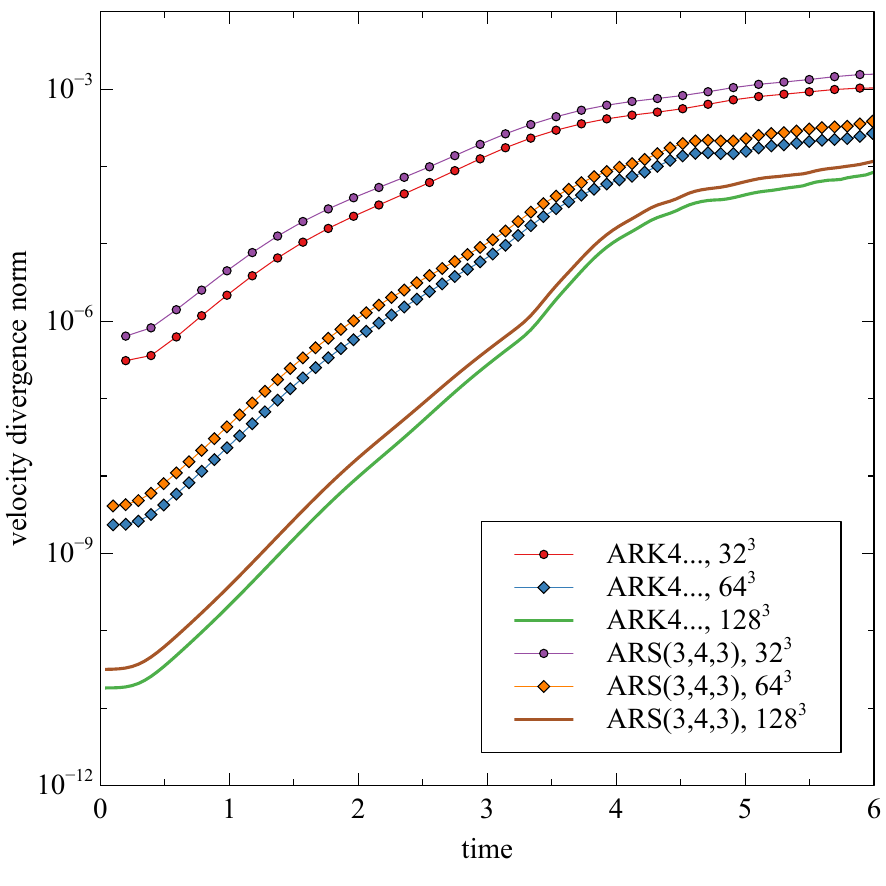}
\includegraphics[width=0.3\textwidth]{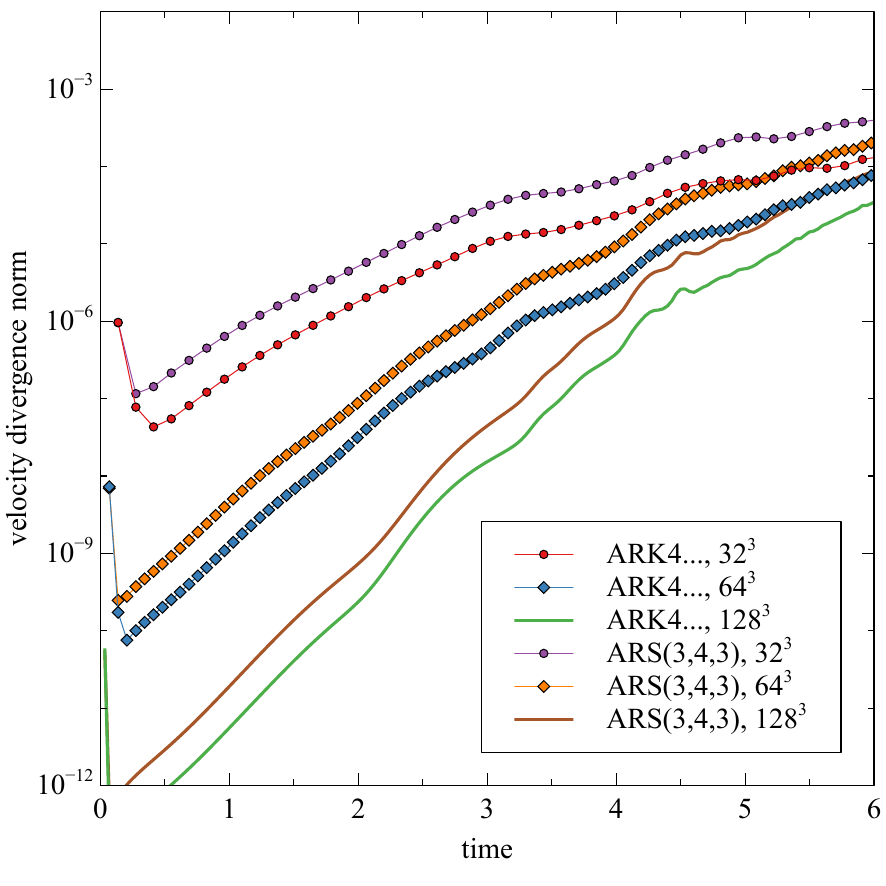}
~\\~\\
\includegraphics[width=0.35\textwidth]{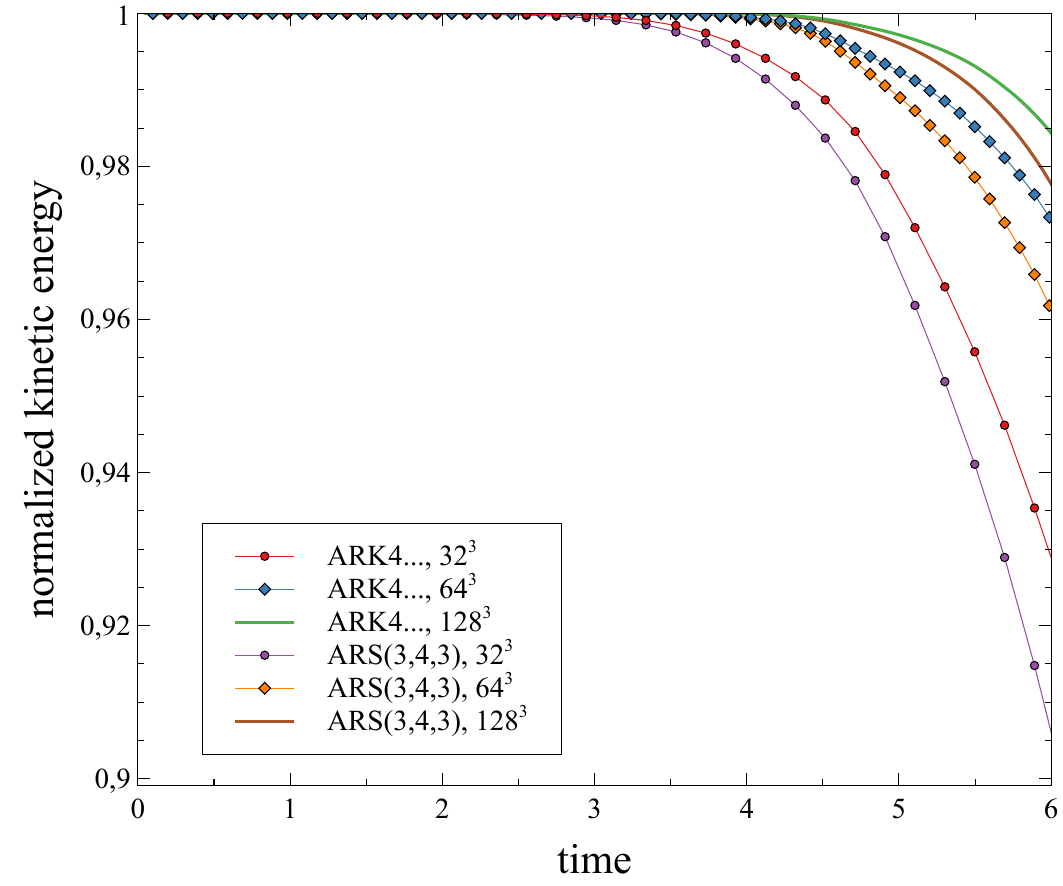}
\includegraphics[width=0.35\textwidth]{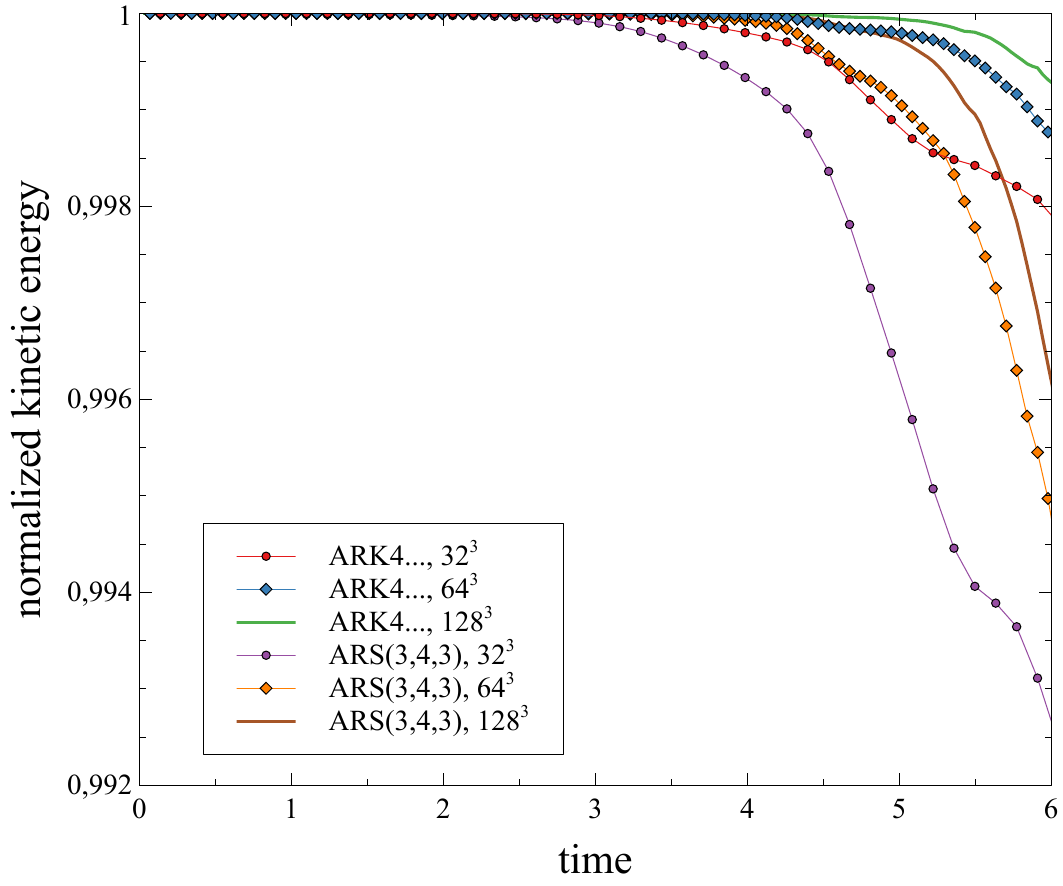}
\caption{Velocity divergence norm (top) and kinetic energy (bottom) for the inviscid Taylor~-- Green vortex. Left: $r_{\sigma} = 0$. Right: $r_{\sigma} = 1$}\label{fig:TGV3DNoVisc}
~\\~\\
\includegraphics[width=0.5\textwidth]{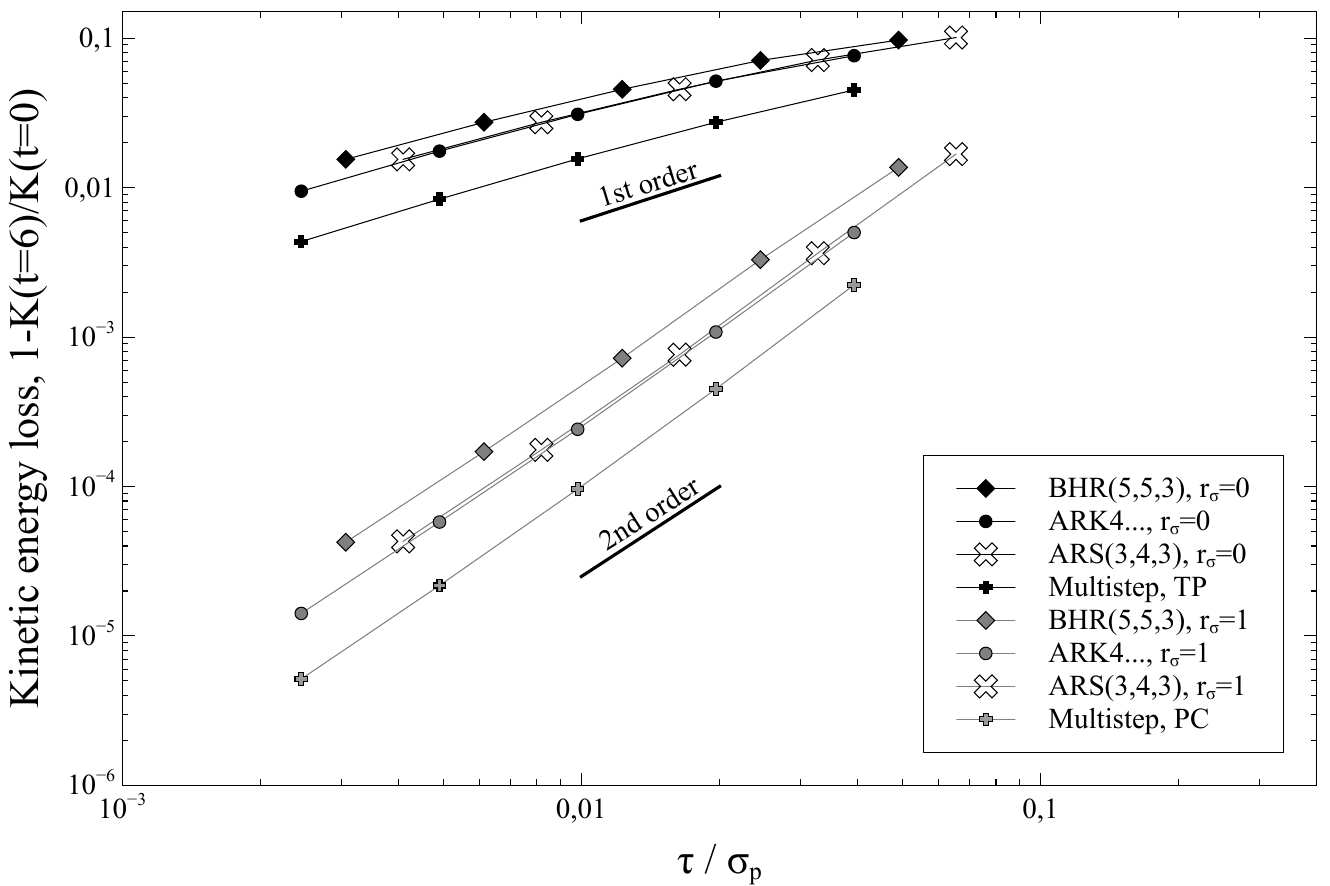}
\caption{Kinetic energy loss for the inviscid Taylor~-- Green vortex depending on the timestep. Mesh $32^3$}\label{fig:TGV3DNoVisc2}
\end{figure}

The dependency of the kinetic energy in time is shown in Fig.~\ref{fig:TGV3DNoVisc} (bottom). The kinetic energy is normalized by its value after two timesteps. For all schemes, the kinetic energy balance is much more accurate if $r_{\sigma} = 1$ (note the different scales on the left and right of Fig.~\ref{fig:TGV3DNoVisc}). This is an expected behavior, see Section~\ref{sect:stabtypes}. If $r_{\sigma} = 1$, then the kinetic energy loss is at most 1\% for each scheme even on the $32^3$ mesh. To compare, the results from \cite{Lehmkuhl2019} show the 10\% energy loss on the $64^3$ mesh.

Additionally, we fix the mesh $32^3$ and plot the kinetic energy loss, \mbox{$e_k = 1 - K(6)/K(0)$} where $K(t)$ is the kinetic energy, as a function of $\tau/\sigma_{p}$ in Fig.~\ref{fig:TGV3DNoVisc2}. Recall that $\sigma_p$ is the number of pressure solver calls per timestep. The parameter $\tau/\sigma_{p}$ instead of $\tau$ is used for the fair comparison. We see that the kinetic energy loss is approximately $O(\tau)$ (numerical convergence orders are 0.82 and 0.89) for $r_{\sigma} = 0$ and $O(\tau^2)$ for $r_{\sigma} = 1$. This result is in accordance with the dependency on $\tau$ of the constants in front of the stabilization terms. The comparison with the 3-rd order multistep method confirms that the case $r_{\sigma}=0$ corresponds to the total pressure (or non-incremental pressure-correction) method, and $r_{\sigma}=1$ corresponds to the standard (incremental) pressure-correction method.

\subsection{2D Taylor~-- Green vortex, inviscid}
\label{sect:TGV2D_inviscid}

Let us repeat the previous case in the the 2D setup. We pick the mesh $32^2$, put $\nu = 0$, prescribe the initial data by the 2D Taylor~-- Green vortex (see Section~\ref{sect:2Dvisc}). The difference to the 3D case is that the flow is laminar.  We show $|e_k| = |1 - K(6)/K(0)|$ as a function of $\tau$ in Fig.~\ref{fig:TGV2DNoVisc}.
\begin{figure}
\centering
\includegraphics[width=0.4\textwidth]{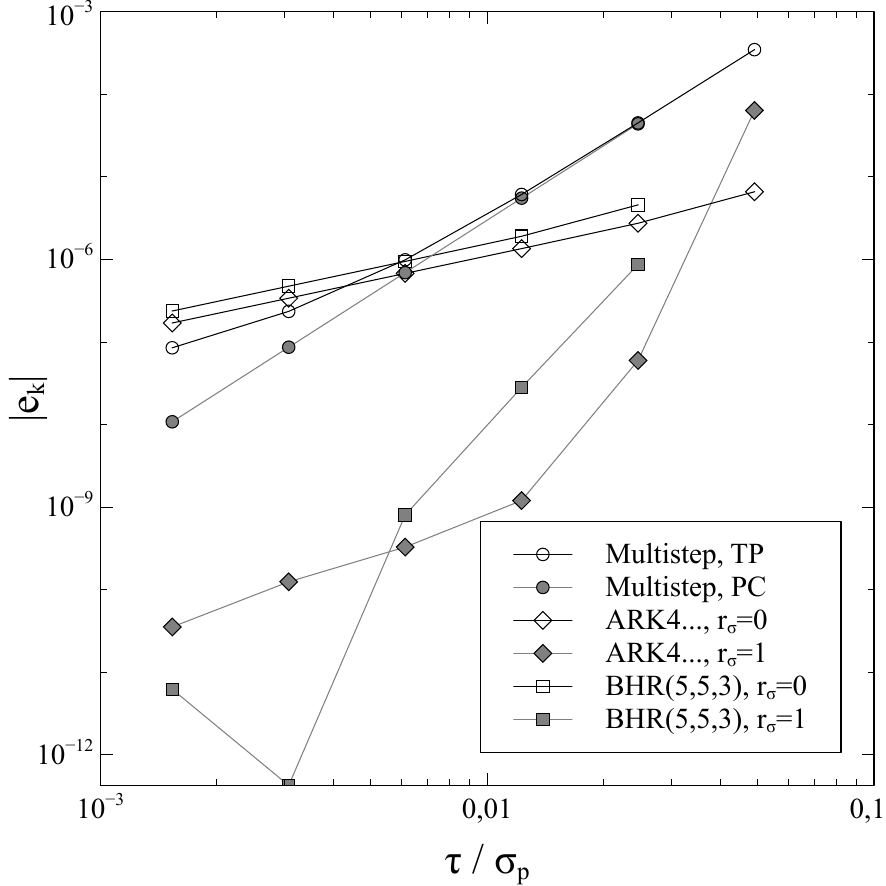}
\caption{Kinetic energy loss for the 2D inviscid Taylor~-- Green vortex depending on the timestep. Mesh $32^2$}\label{fig:TGV2DNoVisc}
\end{figure}

Here the SRK schemes show much more accurate kinetic energy balance than the 3-rd order multistep method. The reason the slopes change with $\tau$ is that for large $\tau$ the time integration error dominates, and for small $\tau$ the kinetic energy continue to decrease while the integral velocity error tends to a nonzero constant defined by the spatial error.

\subsection{3D Taylor~-- Green vortex, Re = 800}

Now consider the Taylor~-- Green vortex  with $\mathrm{Re} = 800$ in the 3D setup. Let $\Omega = (0, 2\pi)^3$ and $\nu = \mathrm{Re}^{-1}$. The initial data are given by \eqref{eq_TGV}. We take $h = 2\pi/128$ and $\tau = 0.8 h$. The values of $dK/dt$, $0 < t < 10$, are compared to the results from \cite{Brachet1984}, where the mesh with $h = 2\pi/256$ was used.

The results for BHR(5,5,3), ARS(2,3,2), ARS(3,4,3), ARK4(3)6L[2]SA are visually identical. They are also match the reference data for $t \le 8.5$, as this is shown in Fig.~\ref{fig:tgv3d}. A choice of $r_{\sigma}$ results in a 0.5\% difference in the final value of $dK/dt$ and in a 0.1\% in the final value of $K$. The use of a second-order finite-element discretization of viscous terms results in a visually identical data for $dK/dt$ in $t \in [0, 8.5]$ and in a 0.5\% difference as $t = 10$. All the results were obtained with the 6-th order FD spatial discretization. We also conducted the computations with the P1-Galerkin method (without the mass lumping) on the same mesh, and results match as well.

\begin{figure}[t]
\centering
\includegraphics[width=0.4\textwidth]{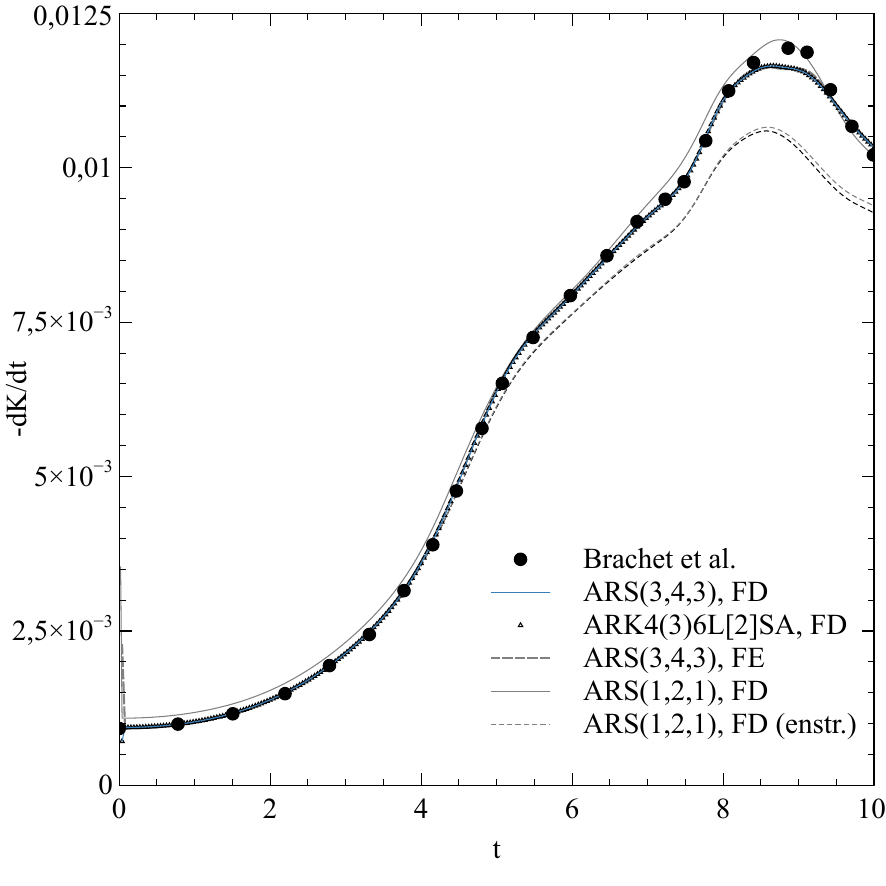}
\caption{$-dK/dt$ for the 3D Taylor~-- Green vortex with Re = 800. Mesh $128^3$}\label{fig:tgv3d}
\end{figure}

To compare, we consider the scheme ARS(1,2,1) with $\tau = 0.4 h$. The results for $-dK/dt$, which go above the true value, differ from the results for the enstrophy, wihch go below. Two lines correspond to $r_{\sigma} = 0$ and $r_{\sigma} = 1$.

\subsection{Channel flow at $\mathrm{Re}_{\tau} = 180$}
\label{sect:turbchannel}

Now we demonstrate our scheme on the turbulent channel flow (TCF) test with $\mathrm{Re}_{\tau} = 180$. Here $\mathrm{Re}_{\tau}$ is the Reynolds number based on the friction velocity and the half-width of the channel. We take 
$\Omega = [0, 4\pi] \times [0, 2] \times [0, 4\pi/3]$, $\nu = 1/180$. Then $u_{\tau}$ should be equal to 1 if the scheme conserves the momentum and the time averaging interval is sufficiently long. 

The computational mesh is Cartesian. The 1D meshes in streamwise and spanwise directions are uniform, with 256 and 128 nodes, correspondingly. The mesh in the normal direction has $y_+ = 0.5$ for the first node, then follows the progression with $q = 1.05$ until the maximal step $(\Delta y)_+ = 2.2$ is reached, 197 nodes in total.

We use the basic finite-difference scheme (see Section~\ref{sect:FD2}) as well as the P1-Galerkin method (see Section~\ref{sect:Gal}). Pressure solver is based on the fast Fourier transform in periodic directions. We take the convective terms, streamwise, and spanwise stresses explicitly; the normal stresses are taken implicitly. This approach was initially designed for computations with a turbulence model (which do not allow FFT-based solvers for the implicit velocity step because of the variable viscosity). For the time integration, we use ARS(2,3,2) with $r_{\sigma} = 1$ and the timestep $\tau = 8 \cdot 10^{-4}$.

The mean streamwise velocity $\bar{u}$ and the root-mean-square pulsations $\overline{(u - \bar{u})^2}$, $\overline{v^2}$, $\overline{w^2}$ are shown in Fig.~\ref{fig:turbch}, together with the reference data from \cite{Vreman2014}. The time averaging interval was of length $t_{\max} = 10$ and $t_{\max} = 20$ (for the finite-difference and Galerkin methods, correspondingly). We do not average over the symmetry plane of the channel, so two lines corresponds to the two halfs of the channel. A minor discrepancy between these lines mean that the time averaging intervals is slightly insufficient.

\begin{figure}[t]
\centering
\includegraphics[width=0.9\textwidth]{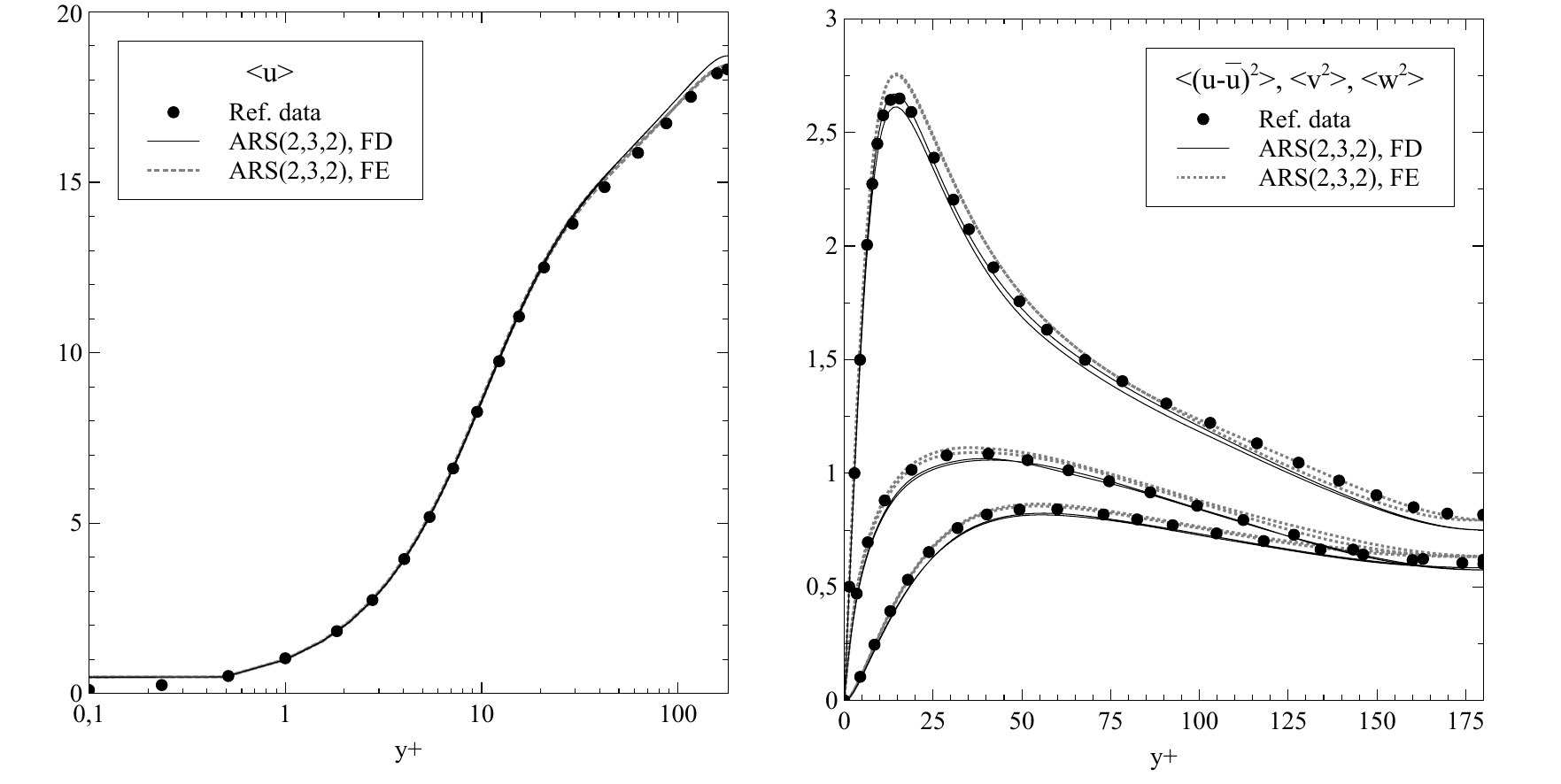}
\caption{Mean velocity (left) and root-mean-square pulsations (right) in the turbulent channel with $\mathrm{Re}_{\tau} = 180$}\label{fig:turbch}
\end{figure}

The numerical results generally comply with the reference solution, considering the fact that the turbulence structures in the streamwise and spanwise directions are slightly underresolved ($(\Delta x)_+ \approx 8.8$, $(\Delta z)_+ \approx 5.9$). The Galerkin method yields slightly better results (except for the RMS streamwise pulsations, the reason being unknown). This is expectable, because it is based on a fourth order spatial discretization on uniform meshes, compared to the second order in the finite-difference case.

\subsection{2D vortex, unstructured mesh}

Now we consider high-order Galerkin discretizations with equal-order piecewise-polynomial interpolation on unstructured triangular meshes. As their implementation, we use the MFEM library \cite{mfem}. MFEM has its own Navier~-- Stokes solver based on
the third-order multistep time integration \cite{Franco2020}, so we compare our results with it. As a multistep solver, it has a drawback that the first two timesteps use lower-order time discretizations. For most of practical applications, the accuracy of the first steps is not important. So for a fair comparison, we present two values: 1) with lower-order timestepping for the first two steps (labeled 1xID in figures below) and 2) with replacing the first two timesteps by the use of the exact solution. Also the results by the multistep method with a timestep $\tau$ are compared to the SRK methods with timestep $\sigma_{p} \tau$, because one stage of an SRK scheme (except for $j=1$ for methods of type CK) has approximately the same the computational costs as one timestep of the multistep method.

We begin with the viscous-dominant case described in Section~\ref{sect:2Dvisc}. The mesh is triangular with average edge length $2\pi/8$. The polynomial order is $p = 4$. We take the convection term explicitly and the viscous term implicitly, as this is done in the MFEM Navier~-- Stokes solver. The results are collected in Fig.~\ref{fig:tgv2dviscunstr_r0} and Fig.~\ref{fig:tgv2dviscunstr_r1}. The time integration error in this case dominates over the spatial error ($\approx 10^{-5}$ for the velocity), so we can see the convergence. 

\begin{figure}
\centering
\includegraphics[width=0.45\textwidth]{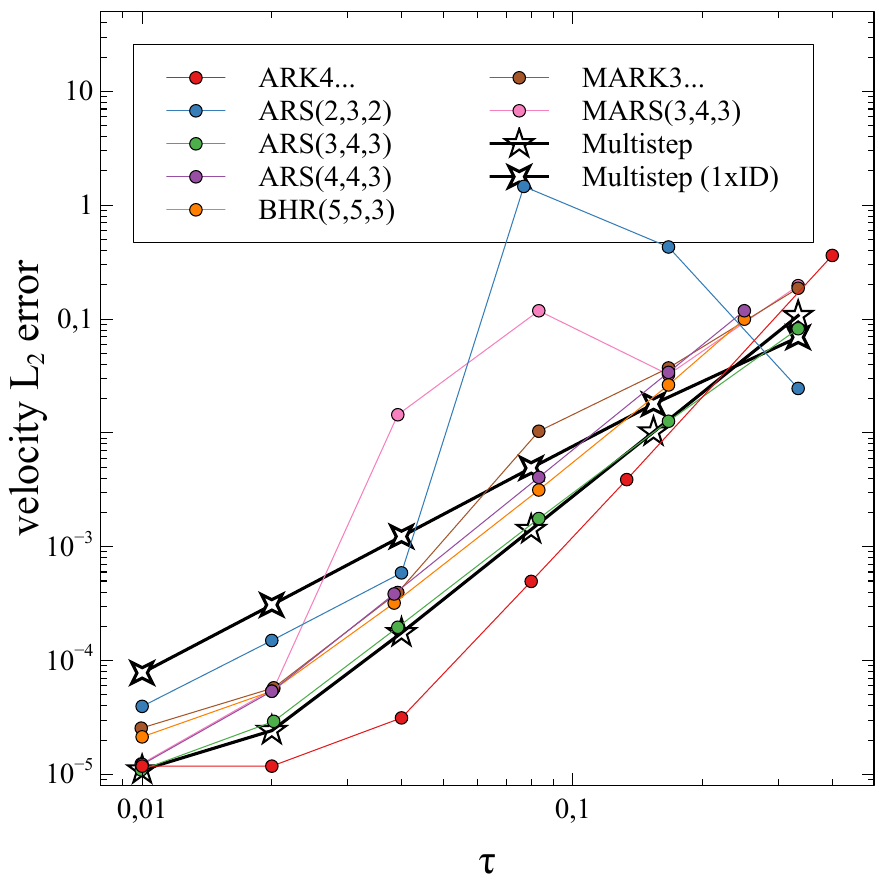}
\includegraphics[width=0.45\textwidth]{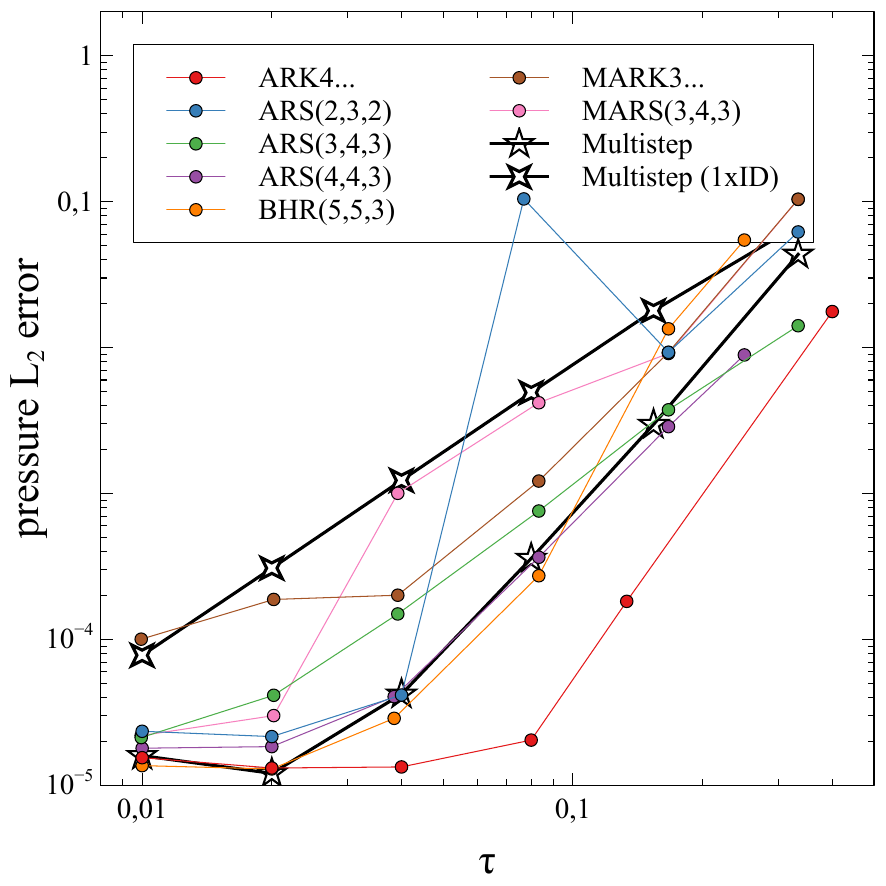}
\caption{Velocity and pressure $L_2$ error for the Taylor~-- Green vortex with $\nu = 0.5$ on an unstructured mesh. $r_{\sigma} = 0$}
\label{fig:tgv2dviscunstr_r0}
~\\~\\
\includegraphics[width=0.45\textwidth]{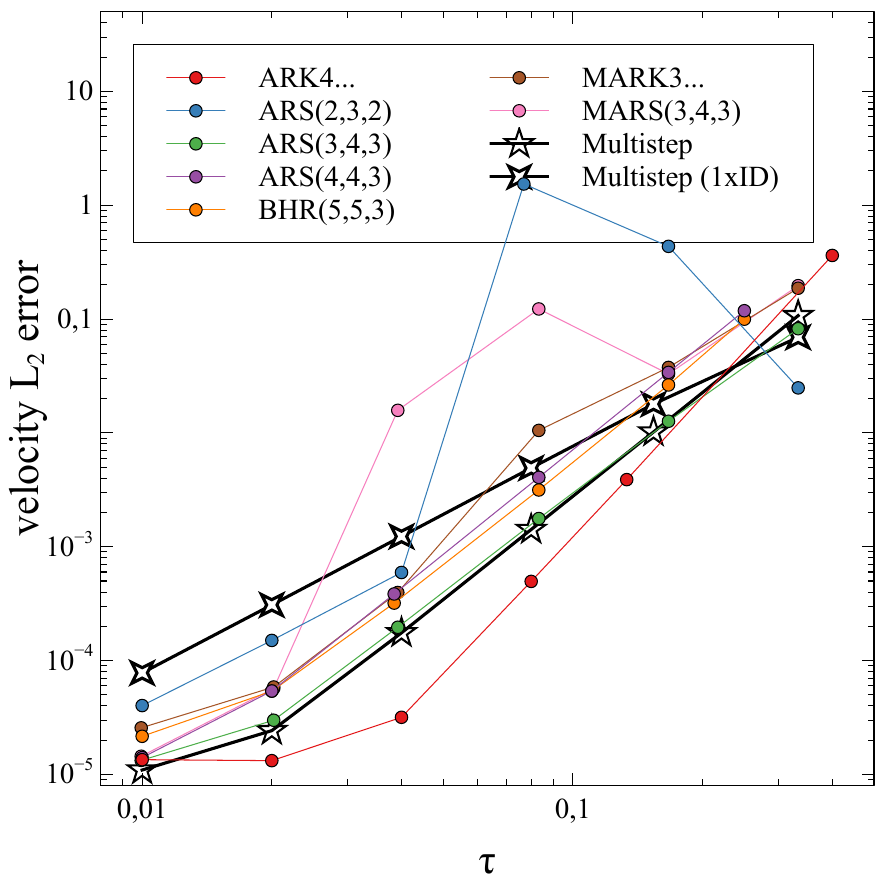}
\includegraphics[width=0.45\textwidth]{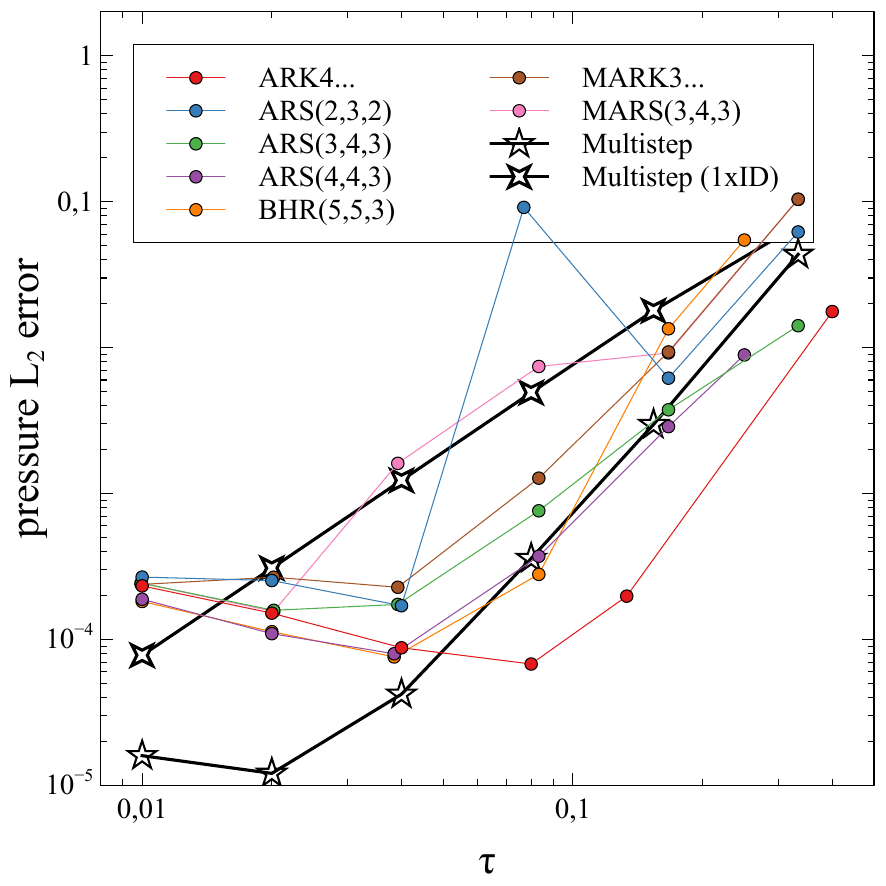}
\caption{Velocity and pressure $L_2$ error for the Taylor~-- Green vortex with $\nu = 0.5$ on an unstructured mesh. $r_{\sigma} = 1$}\label{fig:tgv2dviscunstr_r1}
\end{figure}


The pressure error tends to different values as the timestep goes to zero. The results of the multistep method has the same limit as of SRK schemes with $r_{\sigma} = 0$. This may be explained by the fact that the pressure equation of the first-order multistep method has the same form as of the first-order SRK method.

Two schemes, ARS(2,3,2) and MARS(3,4,3), show instability in a range of timesteps. A possible reason is that the timestep exceeds a CFL-like condition for the explicit part. For some schemes, the domination of the implicit viscous terms mitigates the timestep restriction for the explicit term (see the discussion in \cite{Ascher1997}), but not for these schemes.

The only SRK scheme that outperforms the third order multistep method in this case is ARK4(3)6L[2]SA. The scheme ARS(3,4,3) shows the same velocity error but bigger pressure error. The scheme BHR(5,5,3) shows approximately the same pressure error (in the case $r_{\sigma} = 0$) but slightly bigger velocity error.

Now consider the zero-viscosity case. We use the same mesh, take the polynomial order $p = 6$, the maximal integration time $t_{\max} = 20$, and the timestep $\tau \approx  0.005 \cdot \sigma_{p}$ (corrected so that the number of timestep is integer).

The results for the velocity $L_2$ error, pressure $L_2$ error, and the kinetic energy loss $e_k = 1 - K(t_{\max})/K(0)$ are collected in Table~\ref{table:nonvisc}.

\begin{table}[t]
\caption{\label{table:nonvisc}Numerical results for the travelling vortex using the Galerkin method}
\begin{center}
\begin{tabular}{|c|c|c|c|c|}
\hline 
scheme & $\tau/\sigma_{p}$ & vel. err. & pres. err. & $e_k$ \\
\hline
\multirow{3}{*}{Multistep} & 0.0025 & $1.93 \cdot 10^{-5}$ & $8.92 \cdot 10^{-6}$ & $1.56 \cdot 10^{-7}$ \\
& 0.005 & $2.25 \cdot 10^{-5}$ & $1.23 \cdot 10^{-5}$ & $1.25 \cdot 10^{-6}$ \\
 & 0.01 & $7.11 \cdot 10^{-5}$ & $5.11 \cdot 10^{-5}$ & $9.99 \cdot 10^{-6}$ \\
 & 0.015 & +inf & +inf & +inf \\
\hline
\multirow{4}{*}{ARK4(3)6L[2]SA; $r_{\sigma}=0$} & 0.0025 & $2.19 \cdot 10^{-5}$ & $1.03 \cdot 10^{-5}$ & $5.56 \cdot 10^{-10}$ \\
& 0.005 & $2.65 \cdot 10^{-5}$ & $1.29 \cdot 10^{-5}$ & $1.18 \cdot 10^{-9}$ \\
& 0.01 & $3.50 \cdot 10^{-5}$ & $1.74 \cdot 10^{-5}$ & $4.40 \cdot 10^{-9}$ \\
& 0.015 & +inf & +inf & +inf \\
\hline
\multirow{2}{*}{ARK4(3)6L[2]SA; $r_{\sigma}=1$} & 0.005 & $2.67 \cdot 10^{-5}$ & $1.21 \cdot 10^{-5}$ & $7.11 \cdot 10^{-11}$ \\
& 0.01 & +inf & +inf & +inf \\
\hline
\multirow{3}{*}{ARS(3,4,3); $r_{\sigma}=0$} & 0.005 & $3.10 \cdot 10^{-5}$ & $1.09 \cdot 10^{-4}$ & $1.70 \cdot 10^{-6}$ \\
& 0.01 & $1.27 \cdot 10^{-4}$ & $4.74 \cdot 10^{-4}$ & $1.35 \cdot 10^{-5}$ \\
& 0.015 & +inf & +inf & +inf \\
\hline
\multirow{3}{*}{ARS(3,4,3); $r_{\sigma}=1$} & 0.005 & $3.15 \cdot 10^{-5}$ & $1.09 \cdot 10^{-4}$ & $1.70 \cdot 10^{-6}$ \\
& 0.01 & $1.27 \cdot 10^{-4}$ & $4.74 \cdot 10^{-4}$ & $1.35 \cdot 10^{-5}$ \\
& 0.015 & +inf & +inf & +inf \\
\hline
\multirow{2}{*}{BHR(5,5,3); $r_{\sigma}=0$} & 0.005 & $3.00 \cdot 10^{-5}$ & $1.41 \cdot 10^{-5}$ & $1.74 \cdot 10^{-9}$ \\
& 0.01 & +inf & +inf & +inf \\
\hline
\multirow{2}{*}{BHR(5,5,3); $r_{\sigma}=1$} & 0.005 & $2.55 \cdot 10^{-5}$ & $9.82 \cdot 10^{-6}$ & $1.09 \cdot 10^{-9}$ \\
& 0.01 & +inf & +inf & +inf \\
\hline
\end{tabular}
\end{center}
\end{table}

For the multistep scheme and ARK4(3)6L[2]SA, reducing the timestep does not lead to better results. So we can conclude the the values $2 \cdot 10^{-5}$ and $10^{-5}$ are close to the velocity and pressure error, correspondingly, for the semidiscrete problem. Depending on which fields or quantities are considered impotant, one may assess the results differently.

\begin{itemize}
\item Velocity field. For all schemes considered here, the timestep $\tau = 0.005 \sigma_{p}$ allows to obtain a reasonable accuracy. 
\item Pressure field. The multistep scheme, ARK4(3)6L[2]SA, and BHR(5,5,3) perform well, and ARS(3,4,3) does not.
\item Integral kinetic energy. Here ARK4(3)6L[2]SA, and BHR(5,5,3) perform well.  ARS(3,4,3) and the multistep scheme yields much bigger error. This complies with the results on structured meshes in Section~\ref{sect:TGV2D_inviscid}.
\end{itemize}

\section{Another group of IMEX methods}
\label{sect:IMEXv2}

In this section, we consider SRK schemes basing on a different group of IMEX RK methods. For these SRK schemes, Proposition~\ref{th:continuity} does not hold, i. e. the exact divergence constraint is not preserved. We study how this affects the numerical results.

\subsection{General form}

IMEX RK methods in \cite{Boscarino2021, Boscarino2024} have the following form. 
Each stage $j = 1, \ldots, s$ begins with the evaluation of the explicit term:
\begin{equation}
y_{j}^{(E)} = y^{n-1} + \tau \sum\limits_{k=1}^{j-1} \hat{a}_{jk} \mathcal{H}_k,
\quad
\hat{K}_j = \xi(\hat{t}_j, y_{j}^{(E)}).
\label{eq_IMEX2_1}
\end{equation}
Here $\hat{t}_j = t^{n-1} + \hat{c}_j \tau$. Then the implicit term is defined as the solution of
\begin{equation}
y_j = y_{j,*} + \tau a_{jj} K_j, \quad K_j = \eta(t_j, y_j),
\label{eq_IMEX2_2}
\end{equation}
with $t_j = t^{n-1} + c_j \tau$ and
\begin{equation}
y_{j,*} = y^{n-1} + \tau a_{jj} \hat{K}_j + \tau \sum\limits_{k=1}^{j-1} a_{jk} \mathcal{H}_k.
\label{eq_IMEX2_3}
\end{equation}
Then put $\mathcal{H}_k = \hat{K}_j + K_j$. Finally, the value at $t^{n}$ is
\begin{equation}
y^n = y^{n-1} + \tau \sum\limits_{k=1}^s b_k \mathcal{H}_k.
\label{eq_IMEX2_4}
\end{equation}

Among the family \eqref{eq_IMEX2_1}--\eqref{eq_IMEX2_4}, we consider the methods of type A only (i.~e. assume that matrix $A = \{a_{jk}\}$ is non-degenerate). Otherwise, we would have to take to the explicit term the value $K_1 = \eta(t^{n-1}, y^{(n-1)})$, which is unbounded in the stiff limit.

\subsection{SRK schemes based on \eqref{eq_IMEX2_1}--\eqref{eq_IMEX2_4}}

An IMEX SRK method for \eqref{eq_momentun_base}, \eqref{eq_continuity_base} may be defined by \eqref{eq_IMEX2_1}--\eqref{eq_IMEX2_4} with the substitution \eqref{eq_xi_eta_for_IMEX}. An implicit stage \eqref{eq_IMEX2_2} has the same form as \eqref{eq_IMEX_1}, and it was discussed in Section~\ref{sect:stab}. So no additional work is needed to write a final form of the segregated Runge~-- Kutta scheme.


1. For each $j = 1, \ldots, s$:

1.1. Put $t_j = t^{n-1} + \tau c_j$, $\hat{t}_j = t^{n-1} + \tau \hat{c}_j$, $c_j = \sum_{k=1}^j a_{jk}$, $\hat{c}_j = \sum_{k=1}^j \hat{a}_{jk}$.

1.2. Explicit fluxes:
$$
u_{j}^{(E)} = u^{n-1} + \tau \sum\limits_{k=1}^{j-1} \hat{a}_{jk} \mathcal{H}_k^{(u)},
\quad
p_{j}^{(E)} = p^{n-1} + \tau \sum\limits_{k=1}^{j-1} \hat{a}_{jk} q_k,
$$
$$
\hat{K}_j = \B{C}(\hat{t}_j, u_{j}^{(E)}) - G p_j^{(E)}.
$$

1.3. Implicit velocity step: define
$$
u_{j,*} = u^{n-1} + \tau a_{jj} \hat{K}_j + \tau \sum\limits_{k=1}^{j-1} a_{jk} \mathcal{H}_k^{(u)}
$$
and solve
$$
u_j = u_{j,*} + \breve{\tau} \B{D}(t_j, u_j).
$$
Put $\mathcal{H}_j^{(u)} = \breve{\tau}^{-1} (u_j - u_{j,*}) + \hat{K}_j$.

1.4. Pressure update. Define
$$
p_{j,*} = p^{n-1} + \tau \sum\limits_{k=1}^{j-1} a_{jk} q_k.
$$
For $r_{\sigma}=0$, put $q_{j,*} = 0$, and for $r_{\sigma}=1$, put
$$
q_{j,*} = q^{n-1} + \tau \sum\limits_{k=1}^{j-1} a_{jk} \dot{q}_k.
$$
Put $w^{n-1} = p^{n-1}$ for $r_{\sigma}=0$ and $w^{n-1} = \breve{\tau} q^{n-1}$ for $r_{\sigma}=1$. 
Put
$$
\tilde{p}_j = p_{j,*} + \breve{\tau} q_{j,*}  - \alpha \breve{\tau} w^{n-1}.
$$
Solve
\begin{equation*}
\begin{gathered}
L(p_j - \tilde{p}_j) 
= D\B{R}(t_j, u_j, \tilde{p}_j) - \dot{H}(t_j) + \alpha  (Du^{n-1} - H(t^{n-1})),
\end{gathered}
\end{equation*}
Put $q_j = \breve{\tau}^{-1}(p_j - p_{j,*})$ and, for $r_{\sigma}=1$, $\dot{q}_j = \breve{\tau}^{-1}(q_j - q_{j,*})$.

Note: to evaluate $\B{R}(\ldots)$, one may reuse the implicit fluxes. The explicit fluxes should be recalculated if $c_j \ne \hat{c}_j$; otherwise it is optional.

3. Put
$$
u^n = u^{n-1} + \tau \sum\limits_{k=1}^{s} b_{k} \mathcal{H}_k^{(u)},
$$
$p^n = p_s$, and for $r_{\sigma}=1$ also $q^n = q_s$.

\subsection{Some properties}

In an IMEX method of the form \eqref{eq_IMEX_1}--\eqref{eq_IMEX_3}, each stage is concluded by the evaluation of explicit fluxes, which include the pressure gradient. This corresponds to the correction step of of projection methods. In constrast, if we use the form \eqref{eq_IMEX2_1}--\eqref{eq_IMEX2_4}, then the final value of pressure is not used in the evaluation of explicit terms. Thus, we have no means to satisfy the discrete continuity equation. Being not good itself, this raises suspicion about the use of Baumgarte method ($\alpha > 0$).

By the numerical study of the eigenvalues of the amplification matrices (see details in Section~\ref{sect:spectral2}), we obtain the stability range in $\alpha \tau$ for $r_{\sigma} = 0$ and $r_{\sigma} = 1$. This data is collected in  Table~\ref{table:IMEX2}.

\begin{table}[t]
\caption{\label{table:IMEX2}Stability range in $\alpha$ for SRK methods based on IMEX methods of the form \eqref{eq_IMEX2_1}--\eqref{eq_IMEX2_4}}
\begin{center}
\begin{tabular}{|c|c|c|c|c|c|c|c|c|c|}
\hline 
Class & Notation & Ref. & \multicolumn{2}{|c|}{$(\alpha\tau)_{\max}$} & \multicolumn{2}{|c|}{$\mathrm{CFL}_{\max}$} \\
& & & $r_{\sigma}=0$ & $r_{\sigma}=1$ & & $/ \sigma_{p}$ \\
\hline
A & SSP2(3,2,2) & \cite{Pareschi2003} & 0.66 & -- & -- & -- \\
A & SI-IMEX(3,3,2) & \cite{Boscarino2021} & 0.82 & -- & 1.73 & 0.57 \\
A & SI-IMEX(4,4,3) & \cite{Boscarino2021} & 1.09 & 1.08 & 1.74 & 0.43 \\
A & SI-IMEX(4,3,3) & \cite{Boscarino2024} & 1.09 & 1.08 & 2.82 & 0.7 \\
\hline
\end{tabular}
\end{center}
\end{table}

\subsection{Numerical results}

We put $\alpha \tau = 0.5 (\alpha \tau)_{max}$ where $(\alpha \tau)_{max}$ is defined in Table~\ref{table:IMEX2}.

For the case with a manufactured solution, the results are shown in Figure~\ref{fig:results_ctest1_IMEX2}. We see the drastic improvement in accuracy compared to the IMEX methods of the form \eqref{eq_IMEX_1}--\eqref{eq_IMEX_3} of type A. 

\begin{figure}
\centering
\includegraphics[width=0.3\textwidth]{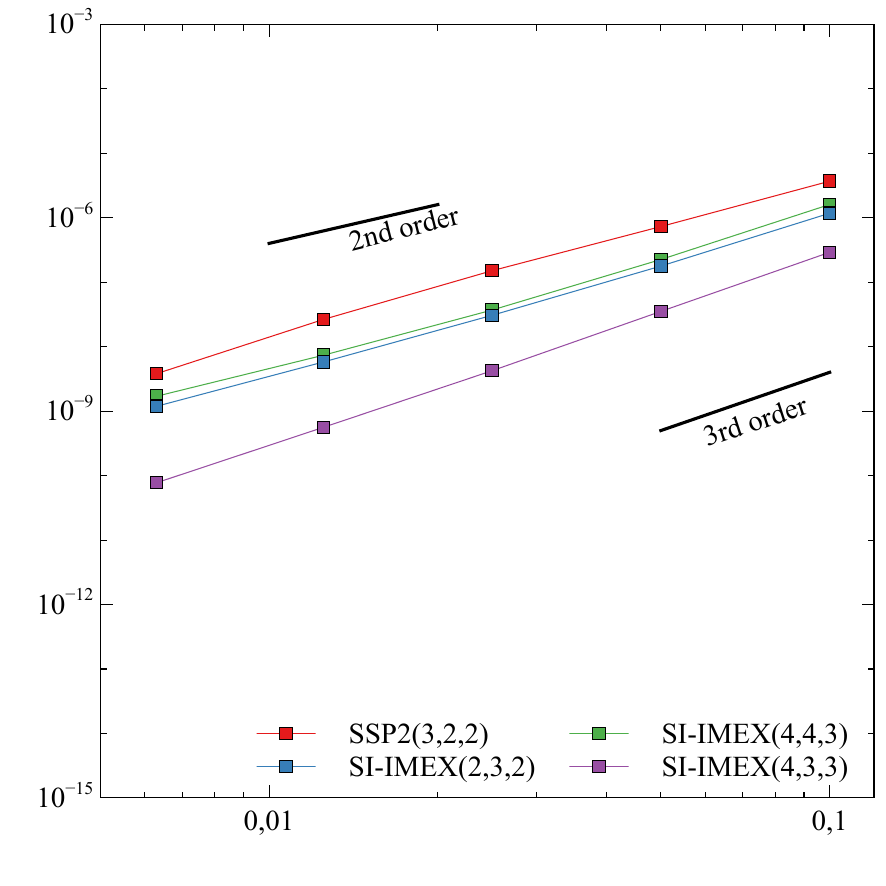}
\includegraphics[width=0.3\textwidth]{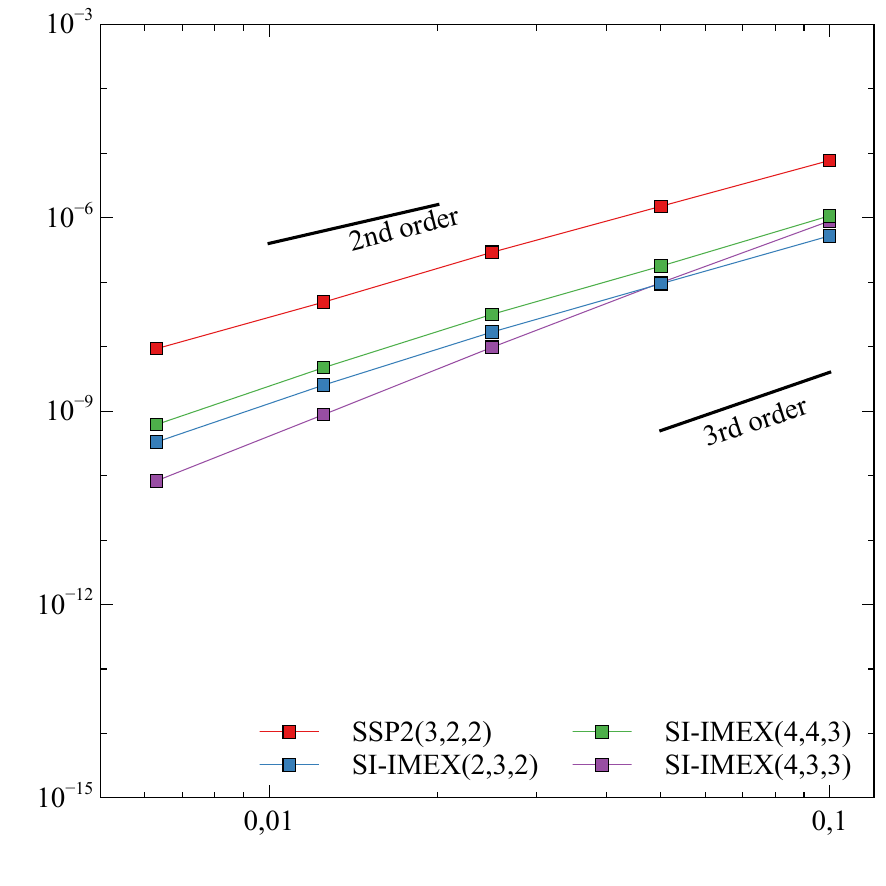}
\includegraphics[width=0.3\textwidth]{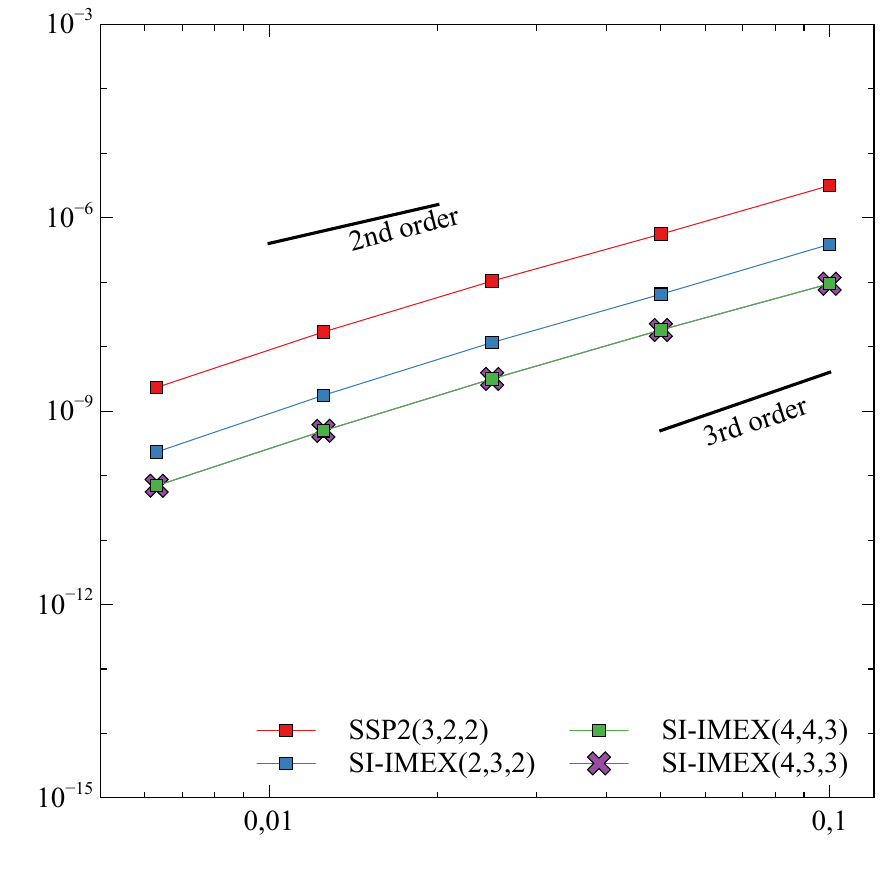}
\includegraphics[width=0.3\textwidth]{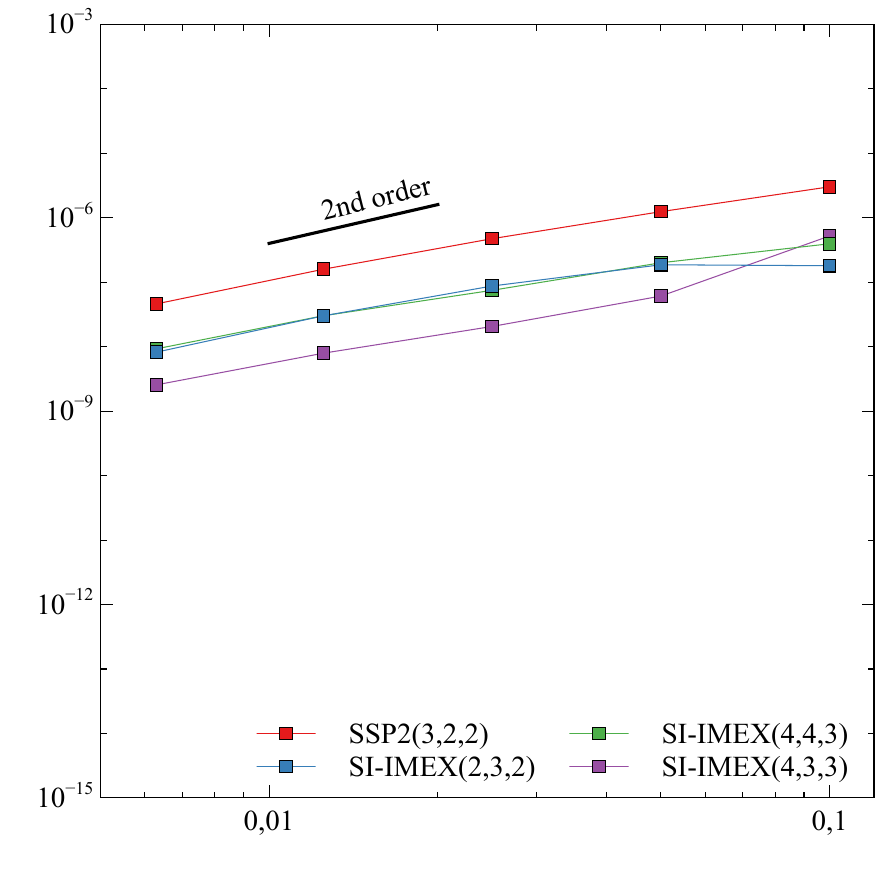}
\includegraphics[width=0.3\textwidth]{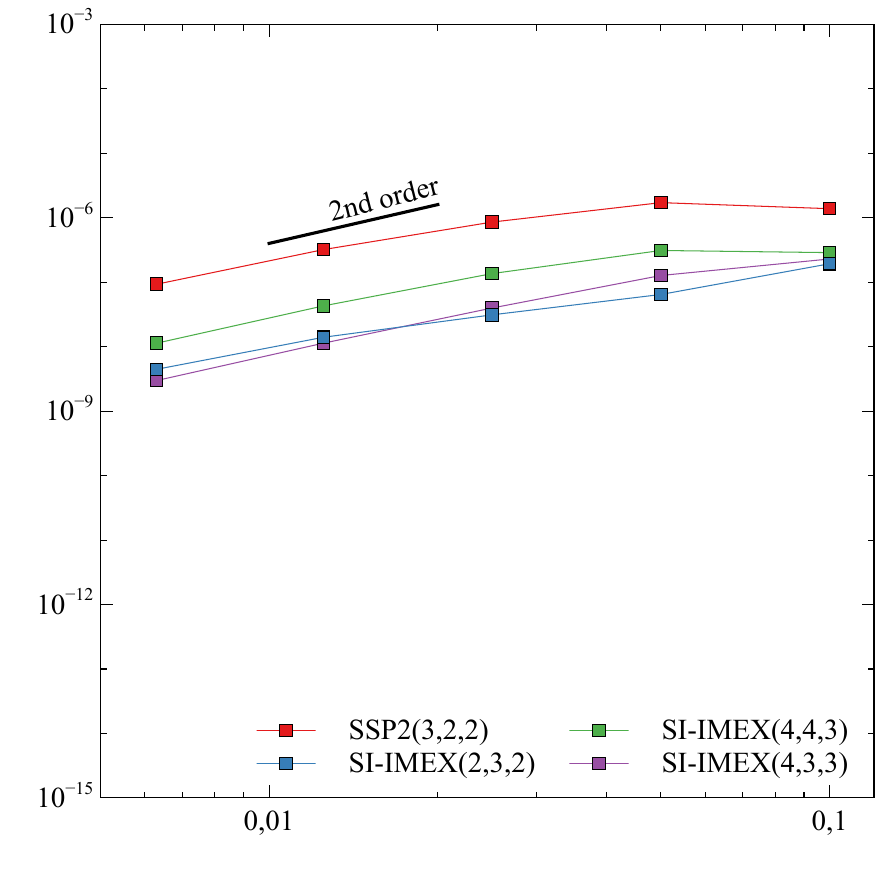}
\includegraphics[width=0.3\textwidth]{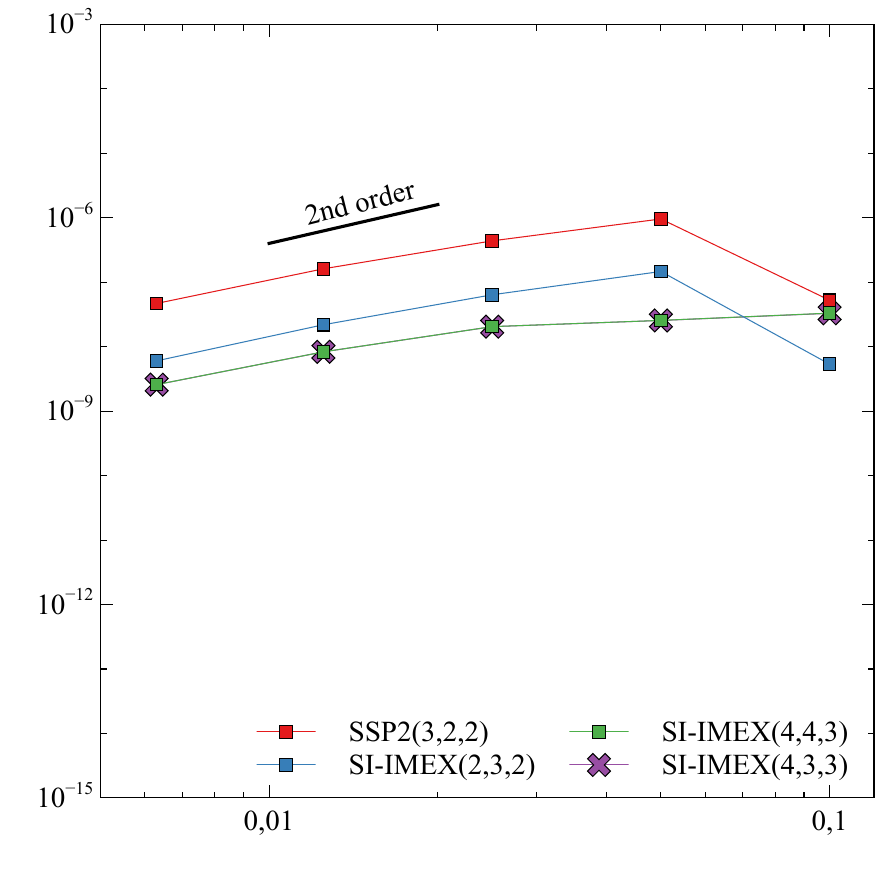}
\caption{Solution error versus timestep for the test with a manufactured solution. $r_{\sigma}=0$. Left to right: explicit SRK schemes, IMEX SRK schemes, implicit SRK schemes. Top: $e_u$; bottom: $e_p$}\label{fig:results_ctest1_IMEX2}
\end{figure}

We skip other simple verification tests and move straight to the inviscid 3D Taylor~-- Green vortex. The case is described in Section~\ref{sect:TGV_inviscid}. To be short, we present the results for the method SI-IMEX(4,3,3) only.

The results for the kinetic energy, the norm of the residual of the continuity equiation, and the norm of the velocity divergence are plotted in Fig.~\ref{fig:IMEXv2_TGV}.

\begin{figure}
\centering
\includegraphics[width=0.3\textwidth]{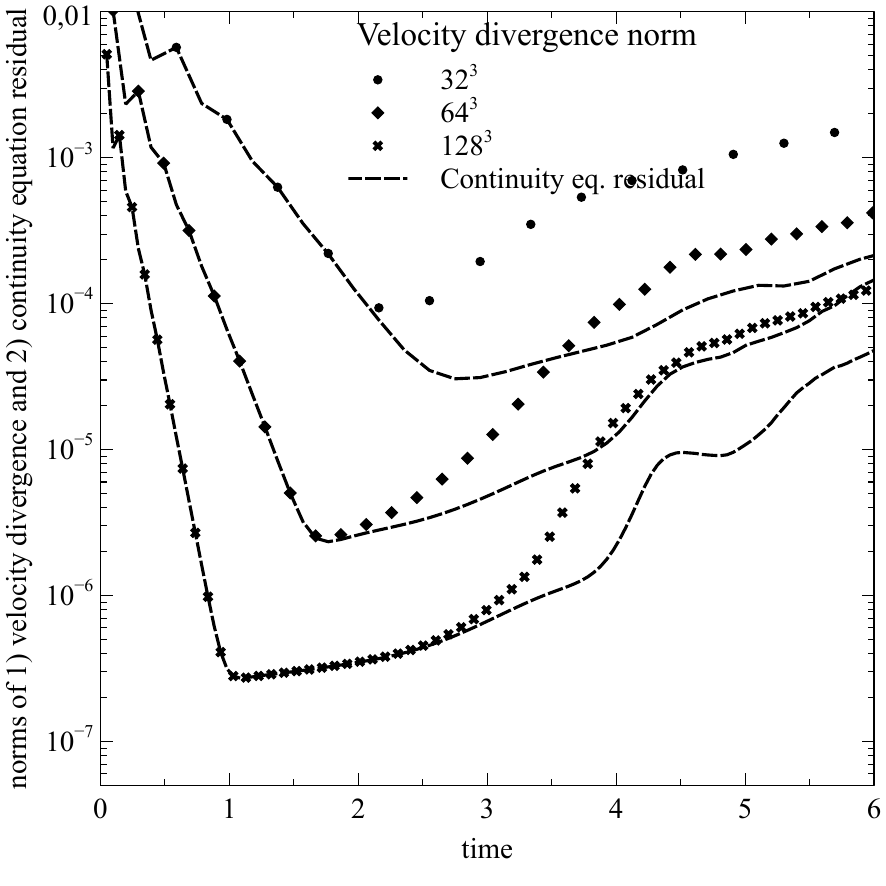}
\includegraphics[width=0.3\textwidth]{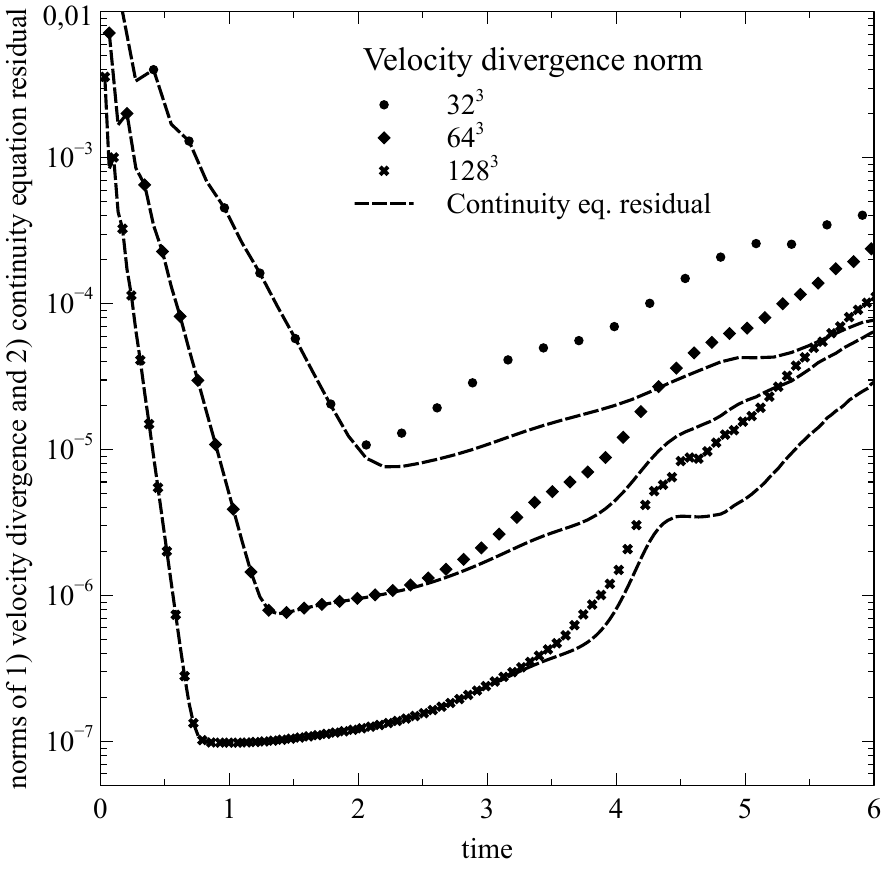}
\includegraphics[width=0.3\textwidth]{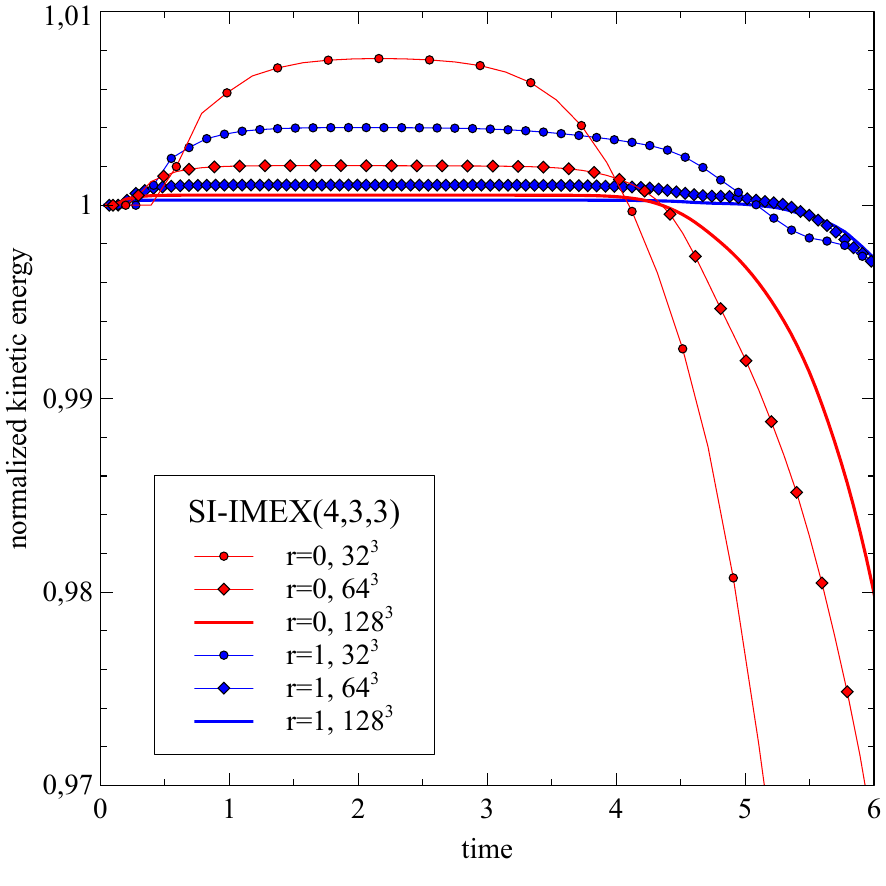}
\caption{Results for SI-IMEX(4,3,3). Left: velocity divergence and continuity equation residual norm for $r_{\sigma} = 0$. Middle: same for $r_{\sigma} = 1$. Right: kinetic energy}\label{fig:IMEXv2_TGV}
\end{figure}

The results are different to what we saw in Section~\ref{sect:TGV_inviscid}. The continuity equation has essentially nonzero residual. For a large $t$, the norm of this residual is smaller than the norm of the velocity divergence. For the kinetic energy, SI-IMEX(4,3,3) yields a non-monotone behavior. Although the kinetic energy and the velocity divergence norm of the divergence do converge to correct values as the mesh is refined, we believe that the schemes of type CK considered in the previous section are preferable.


\section{Conclusion}

In this paper, we presented a new family of segregated Runge~-- Kutta (SRK) methods for the time integration of differential-algebraic equations that result from spatial discretizations the incompressible Navier~-- Stokes equation. Original SRK methods \cite{Colomes2015} were not applicable if a pressure stabilization term is present in the continuity equation. Our methods circumvent this issue.

SRK schemes are based on implicit-explicit Runge~-- Kutta methods, which may be either of type A or of type CK. For IMEX methods of type A, the use of the form \eqref{eq_IMEX_1}--\eqref{eq_IMEX_3} results in a large solution error. The use of the form \eqref{eq_IMEX2_1}--\eqref{eq_IMEX2_4} improves the convergence rate, but the continuity equation is not exactly satisfied, and a nonmonotone behavior of the kinetic energy was observed. Method of type CK with $b \ne \hat{b}$ suffer from the same issue. Therefore, for the construction of SRK schemes, it is better to use IMEX RK methods of type CK that satisfy $b = \hat{b}$. Recall that type ARS is a sub-type of type CK.

Basing on the numerical results, we selected three IMEX methods for the use in SRK schemes:

\begin{itemize}

\item ARS(3,4,3) \cite{Ascher1997}. This is the only scheme of type ARS that shows good behavior for most of the tests we carried out. Type ARS means that the terms that should be taken implicitly, are indeed always taken implicitly (for other methods of type CK, the right-hand side of the momentum equation at the beginning of a timestep is taken explicitly). Although ARS(3,4,3) shows the second order convergence in pressure for all tests and this can be improved by switching to MARS(3,4,3), the latter yields an SRK scheme with worse stability properties, as shown in Fig.~\ref{fig:tgv2dviscunstr_r0} and Fig.~\ref{fig:tgv2dviscunstr_r1}.

\item BHR(5,5,3) \cite{Boscarino2009}. Most of IMEX methods suffer from the so-called order reduction phenomenon. SRK methods inherit this property. This means that the convergence of pressure is usually limited by the second order, except for some model cases. Among the methods we considered, only BHR(5,5,3)  does not suffer from the order reduction. This method is worth considering when solving problems with time-dependent boundary conditions. However, it has a slightly worse restriction on the Courant number then other methods (0.56 per implicit stage versus 0.94 for ARS(3,4,3) and 0.8 for ARK4(3)6L[2]SA).

\item ARK4(3)6L[2]SA \cite{Kennedy2003}. For tests with periodic conditions, it gives the best results in terms of the solution error, both in velocity and pressure. This is the only method that outperforms the 3-rd order multistep method for all tests we used for comparison. Despite being a monstrous construction with 6 explicit fluxes evaluations and 5 pressure solver calls per timestep, this is compensated by a good stability range ($\mathrm{CFL}_{\max} = 4$ for the linear, fully explicit case). 

\end{itemize}

The use of $r_{\sigma} = 0$ yields a better stability in practice, while the use of $r_{\sigma} = 1$ yields a better kinetic energy preservation. For scale-resolving simulations, we need to take $r_{\sigma}$ either equal or very close to 1.

\appendix
\section*{Appendix}
\section{Discussion on the initial data}
\label{sect:ID}

In this section we discuss the initial data for \eqref{eq_momentun_base}, \eqref{eq_continuity_base}. For brevity, put $f(t,u) = -\B{F}_{conv}(u) + \B{F}_{diff}(u) + \B{F}_{source}(t)$.

Recall that $S$ is a self-adjoint operator by assumption.  Let $S'\ :\ \mathrm{Im}\,S \to \mathrm{Im}\,S$ be the restriction of $S$ to $\mathrm{Im}\,S$. Let $L'\ :\ \mathrm{Ker}\,S \to \mathrm{Ker}\,S$ be the restriction of $\Pi_{\mathrm{Ker}} L \Pi_{\mathrm{Ker}}$ to $\Pi_{\mathrm{Ker}} S$. Both operators are invertible.

First consider the case $\sigma_0 > 0$, $\sigma_1 = 0$. Then formally, pressure is a unique function of velocity, as stated by the following lemma. 

\begin{lemma}
Let $u(t), p(t)$ be a solution of \eqref{eq_momentun_base}, \eqref{eq_continuity_base} with $\sigma_1 = 0$. Let $\Pi_{\mathrm{Im}}$ and $\Pi_{\mathrm{Ker}}$ be the orthogonal projections of $Q_h$ onto $\mathrm{Im}\,S$ and $\mathrm{Ker}\,S$, respectively. Then 
\begin{equation}
p(t) = (L')^{-1}\Pi_{\mathrm{Ker}} D f(t,u(t)) + ((L')^{-1} \Pi_{\mathrm{Ker}} L - I) (\sigma_0 S')^{-1} \Pi_{\mathrm{Im}} D u(t)
\label{eq_gen_pressure}
\end{equation}
where $I$ is the identity operator.
\end{lemma}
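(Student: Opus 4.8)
The plan is to exploit that $S = S^{*} \ge 0$ (from \eqref{eq_gen_assumptions}), so that $Q_h$ splits orthogonally as $\mathrm{Ker}\,S \oplus \mathrm{Im}\,S$; I write $p = p_{0} + p_{1}$ with $p_{0} = \Pi_{\mathrm{Ker}}p \in \mathrm{Ker}\,S$ and $p_{1} = \Pi_{\mathrm{Im}}p \in \mathrm{Im}\,S$, and determine the two components separately. I take $H \equiv 0$, as reflected by \eqref{eq_gen_pressure} (for $H \not\equiv 0$ one simply carries the extra $\Pi_{\mathrm{Im}}H$ and $\Pi_{\mathrm{Ker}}\dot{H}$ terms through the same computation). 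Rewrite \eqref{eq_momentun_base} as $\dot{u} = f(t,u) - Gp$ and \eqref{eq_continuity_base} with $\sigma_1 = 0$ as $Du + \sigma_0 Sp = 0$. Since $S$ annihilates $\mathrm{Ker}\,S$ and acts on $\mathrm{Im}\,S$ as the invertible operator $S'$, we have $Sp = S'p_{1}$; projecting $Du + \sigma_0 Sp = 0$ onto $\mathrm{Im}\,S$ gives $\Pi_{\mathrm{Im}}Du + \sigma_0 S'p_{1} = 0$, whence
$$
p_{1} = -(\sigma_0 S')^{-1}\Pi_{\mathrm{Im}}Du .
$$

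The component $p_{0}$ is invisible to the continuity equation: its $\mathrm{Ker}\,S$-projection collapses to the hidden constraint $\Pi_{\mathrm{Ker}}Du = 0$ (because $\Pi_{\mathrm{Ker}}Sp = 0$), which says nothing about $p_{0}$. This is the index-two DAE feature, and the standard cure is differentiation. Differentiating $\Pi_{\mathrm{Ker}}Du = 0$ in $t$ and substituting $\dot{u} = f(t,u) - Gp$ from the momentum equation gives $\Pi_{\mathrm{Ker}}D f(t,u) = \Pi_{\mathrm{Ker}}DG p$. Using $DG = L + S$ (the definition of $S$ in \eqref{eq_gen_assumptions}) together with $\Pi_{\mathrm{Ker}}Sp = 0$, the right-hand side equals $\Pi_{\mathrm{Ker}}Lp = \Pi_{\mathrm{Ker}}L p_{0} + \Pi_{\mathrm{Ker}}L p_{1} = L'p_{0} + \Pi_{\mathrm{Ker}}L p_{1}$, the last step by the definition of $L'$. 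Since $L'$ is invertible,
$$
p_{0} = (L')^{-1}\Pi_{\mathrm{Ker}}D f(t,u) - (L')^{-1}\Pi_{\mathrm{Ker}}L p_{1}.
$$

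Substituting the formula for $p_{1}$ and forming $p = p_{0} + p_{1}$ gives
$$
p = (L')^{-1}\Pi_{\mathrm{Ker}}D f(t,u) + (L')^{-1}\Pi_{\mathrm{Ker}}L (\sigma_0 S')^{-1}\Pi_{\mathrm{Im}}Du - (\sigma_0 S')^{-1}\Pi_{\mathrm{Im}}Du ,
$$
and factoring $(\sigma_0 S')^{-1}\Pi_{\mathrm{Im}}Du$ out of the last two terms yields \eqref{eq_gen_pressure}. The two points that require care are: (i) recognizing that $p_{0}$ has to be read off from the \emph{differentiated} constraint rather than from \eqref{eq_continuity_base} itself, which is exactly where the case $\sigma_0 > 0$, $\sigma_1 = 0$ and the smoothness of the solution $u(t)$ are used; and (ii) the fact that $L$ does not respect the decomposition $\mathrm{Ker}\,S \oplus \mathrm{Im}\,S$, so the cross term $\Pi_{\mathrm{Ker}}L p_{1}$ does not vanish and is what produces the composite operator $(L')^{-1}\Pi_{\mathrm{Ker}}L$ in \eqref{eq_gen_pressure}. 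Everything else is routine linear algebra with the orthogonal projections.
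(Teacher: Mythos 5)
Your proof is correct and follows essentially the same route as the paper's: project the continuity equation onto $\mathrm{Im}\,S$ to read off $\Pi_{\mathrm{Im}}p$, differentiate the $\mathrm{Ker}\,S$-projection of the constraint, substitute the momentum equation, and use $\Pi_{\mathrm{Ker}}DG=\Pi_{\mathrm{Ker}}L$ to solve for $\Pi_{\mathrm{Ker}}p$ via $L'$. Your explicit remark that $H\equiv 0$ is being assumed is a point the paper leaves implicit, but otherwise the two arguments coincide step for step.
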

\begin{proof}
Denote $S_0 =  \sigma_0 S$, $S_0' = \sigma_0 S'$. Applying $\Pi_{\mathrm{Im}}$ to \eqref{eq_continuity_base} we get
$$
\Pi_{\mathrm{Im}} D u + S_0' \Pi_{\mathrm{Im}} p = 0,
$$
which gives $\Pi_{\mathrm{Im}} p = -(S_0')^{-1} \Pi_{\mathrm{Im}} D u$. Applying $\Pi_{\mathrm{Ker}} d/dt$ to \eqref{eq_continuity_base} we get
$$
\Pi_{\mathrm{Ker}} D \frac{du}{dt} = 0,
$$
which expands to $\Pi_{\mathrm{Ker}} D(Gp - f) = 0$.
Since $\Pi_{\mathrm{Ker}} DG = \Pi_{\mathrm{Ker}} L$, then
$$
\Pi_{\mathrm{Ker}} L p = \Pi_{\mathrm{Ker}} D f.
$$
Now represent $p = \Pi_{\mathrm{Im}} p + \Pi_{\mathrm{Ker}} p$. Then
\begin{equation}
L' \Pi_{\mathrm{Ker}} p = - \Pi_{\mathrm{Ker}} L \Pi_{\mathrm{Im}} p + \Pi_{\mathrm{Ker}} D f.
\label{eq_A1b}
\end{equation}
Thus, $\Pi_{\mathrm{Ker}} p = (L')^{-1}(\Pi_{\mathrm{Ker}} L (S_0')^{-1} \Pi_{\mathrm{Im}} D u + \Pi_{\mathrm{Ker}} D f)$.
\end{proof}

Consider two opposite cases. If $S=0$, then $p = L^{-1} D f(t,u)$, which complies with the pressure definition at the continuous level. However, if $S$ is invertible, we have $p = -(\sigma_0 S)^{-1} D u$. Since $S$ is a difference of two Laplace discretizations, its eigenvalues corresponding to long waves are very small. Therefore, the formula $p = -(\sigma_0 S)^{-1} D u$ is practically meaningless, and so is \eqref{eq_gen_pressure} in the case of a general $S$.

Now consider the case $\sigma_0 = 0$, $\sigma_1 > 0$. It is formally enough to specify $u$ and $\Pi_{\mathrm{Im}}p$. Then $\Pi_{\mathrm{Ker}}p$ may be recovered from \eqref{eq_A1b}. For the time derivatives, we have $\Pi_{\mathrm{Im}}(dp/dt) = -(\sigma_1 S')^{-1} \Pi_{\mathrm{Im}} Du$, and $\Pi_{\mathrm{Ker}}(dp/dt)$ may be found from the time derivative of \eqref{eq_A1b}. But these expressions are practically as useless as \eqref{eq_gen_pressure}. 

In all cases considered in this paper, we prescribe the initial data for $u$, $p$, and $dp/dt$. Hoewever, if $\sigma_1 = 0$, then the value of $dp/dt$ is not used, and even if $\sigma_1 \ne 0$, the use $dp/dt=0$ as the initial value does not significantly affect the results.

\end{document}